\documentclass[a4paper]{article}
\usepackage{hyperref}
\usepackage{graphicx}
\usepackage{mathrsfs}
\usepackage{enumerate} 
\usepackage{amsxtra,amssymb,latexsym, amscd,amsthm}
\usepackage{indentfirst}
\usepackage{color}
\usepackage[utf8]{inputenc}
\usepackage[mathscr]{eucal}
\usepackage{amsfonts}
\usepackage{graphics}
\usepackage{multirow}
\usepackage{array}
\usepackage{subfigure}
\usepackage{cite}
\usepackage{wrapfig}
\usepackage{pgf,tikz,pgfplots}
\pgfplotsset{compat=1.14}

\usetikzlibrary{arrows}
\newcommand{\footremember}[2]{%
    \footnote{#2}
    \newcounter{#1}
    \setcounter{#1}{\value{footnote}}%
}
\newcommand{\footrecall}[1]{%
    \footnotemark[\value{#1}]%
} 

\def\R{{\mathbb R}}
\def\N{{\mathbb N}}

\def\cM{\mathcal M}

\DeclareMathOperator{\rank}{rank}

\DeclareMathOperator{\trace}{trace}

\newtheorem{lemma}{\bf Lemma}[section]

\newtheorem{algorithm}{\bf Algorithm}[section]
\newtheorem{example}{\bf Example}[section]
\newtheorem{theorem}{\bf Theorem}[section]
\newtheorem{proposition}{\bf Proposition}[section]
\newtheorem{corollary}{\bf Corollary}[section]

\newtheorem{remark}{\bf Remark}[section]
\providecommand{\keywords}[1]
{
  \small	
  \textbf{\textbf{Keywords:}} #1
}
\begin{document}
\definecolor{qqzzff}{rgb}{0,0.6,1}
\definecolor{ududff}{rgb}{0.30196078431372547,0.30196078431372547,1}
\definecolor{xdxdff}{rgb}{0.49019607843137253,0.49019607843137253,1}
\definecolor{ffzzqq}{rgb}{1,0.6,0}
\definecolor{qqzzqq}{rgb}{0,0.6,0}
\definecolor{ffqqqq}{rgb}{1,0,0}
\definecolor{uuuuuu}{rgb}{0.26666666666666666,0.26666666666666666,0.26666666666666666}
\newcommand{\vi}[1]{\textcolor{blue}{#1}}
\newif\ifcomment
\commentfalse
\commenttrue
\newcommand{\comment}[3]{%
\ifcomment%
	{\color{#1}\bfseries\sffamily#3%
	}%
	\marginpar{\textcolor{#1}{\hspace{3em}\bfseries\sffamily #2}}%
	\else%
	\fi%
}
\newcommand{\victor}[1]{
	\comment{blue}{V}{#1}
}
\title{Positivity certificates and polynomial optimization on non-compact semialgebraic sets}
\author{%
Ngoc Hoang Anh Mai\footremember{1}{CNRS; LAAS; 7 avenue du Colonel Roche, F-31400 Toulouse; France.}, %
  Jean-Bernard Lasserre\footrecall{1} \footremember{2}{Universit\'e de Toulouse; LAAS; F-31400 Toulouse, France.}, %
   Victor Magron\footrecall{1} %
  }
\maketitle
\begin{abstract}
In a first contribution, we revisit two certificates of positivity on (possibly non-compact) basic semialgebraic sets due to 
Putinar and Vasilescu [Comptes Rendus de l'Acad\'emie des Sciences-Series I-Mathematics, 328(6) (1999) pp. 495-499]. 
We use Jacobi's technique from [Mathematische Zeitschrift, 
237(2) (2001) pp. 259-273] to provide an alternative proof with an {\em effective 
degree bound} on the sums of squares multipliers in such certificates. 
As a consequence, it allows one to define a hierarchy of semidefinite relaxations for a general polynomial optimization problem. Convergence of this hierarchy to a neighborhood of the optimal value as well as strong duality and analysis are guaranteed. 
In a second contribution, we introduce a new numerical method for solving systems of 
polynomial inequalities and equalities with possibly uncountably many solutions. As a bonus, one may 
apply this method to obtain approximate global optimizers in polynomial optimization.
\end{abstract}
\keywords{nonnegativity certificate; Putinar's Positivstellensatz; basic semialgebraic set; sums of squares; polynomial optimization; semidefinite programming; moment-SOS hierarchy; uniform denominators; polynomial systems}
\tableofcontents
\section{Introduction}
This paper is concerned with polynomial optimization on non-compact semialgebraic sets. Its spirit and main motivation is to voluntarily avoid the {\em big-ball trick} which reduces the problem to the compact case. 
The big-ball ``trick" is to simply assume that the global minimum is attained in some {\em a priori known} ball $B_M$ centered at zero of radius $M>0$ potentially large. 
Therefore, by adding this additional constraint to the definition of the feasible set, one is back to the compact case.

Why?  This ``trick" has definitely some merit since in some practical applications such an $M$ can be sometimes determined with {\em ad-hoc} arguments. 
However, it is not satisfactory from a mathematical point of view. Indeed after one has found a minimizer $x^\star\in B_M$, one is still left with the question: {\em Is really $x^\star$ a global minimizer? 
Was $M$ chosen sufficiently large?} In other words, in doing so one does not obtain an certificate that $x^\star$ is a global minimizer. 
As we will see, the challenge is to {\em adapt} some certificates of positivity on non-compact sets already available in the literature, to turn them
into a practical algorithm.

\paragraph{Background.} Deciding nonnegativity of a polynomial is an important and attractive problem throughout history of the development of real algebraic geometry.  
In his famous and seminal work \cite{hilbert1888darstellung}, Hilbert characterized all cases where nonnegative polynomials are sums of squares (SOS) of polynomials and later
Blekherman showed in~\cite{blekherman2006there} that there are significantly more nonnegative polynomials than SOS. 
In 1927, Artin proved in~\cite{artin1927zerlegung} that every nonnegative polynomial can be decomposed as a sum
of squares of rational functions, thereby solving Hilbert's 17th probem.
Namely, $f$ is nonnegative if and only if 
$\sigma_D f = \sigma_N$ for some SOS polynomials $\sigma_N$ and $\sigma_D$.
Later, certificates of positivity on a general semialgebraic set have been proposed by 
Stengle \cite{stengle1974nullstellensatz} (see also Krivine \cite{krivine1964anneaux}). 
A \emph{basic semialgebraic} set $S(g,h)$ can be written as
\begin{equation}
    \label{eq:S(g,h)}
S(g,h)\,:=\,\{\,x \in \R^n :\: g_j(x)\geq0\,,\:j=1,\ldots,m;\:h_t(x)=0\,,\:t=1,\ldots,l\,\} \,,
\end{equation}
where $n$ is the dimension of the ambient space and $(g_j, h_{t})$ are polynomials.
Stengle and Krivine rely on a tool from real algebraic geometry called \emph{preordering} 
\begin{equation}
\label{eq-preordering}
P\left( {g,h} \right) := \left\{ {\sum\limits_{\alpha  \in {{\{ 0,1\} }^m}} {{\sigma _\alpha }g_1^{{\alpha _1}} \ldots g_m^{{\alpha _m}}}  + \sum\limits_{t = 1}^l {{\phi _t}{h_t}} :\,  {\sigma _\alpha } \in \Sigma \left[ x \right],\ {\phi _t} \in \R\left[ x \right]} \right\}
\,,
\end{equation}
associated with the polynomials $(g_j,h_t)$. Here $\R[x]$ denotes the ring of real polynomials and $\Sigma[x] \subset \R[x]$ stands for the set of SOS polynomials. Krivine-Stengle's Positivstellensatz (or  {\em representation}) states that 
\begin{eqnarray}
\label{stengle-1}
f \geq 0\text{ on }S( {g,h} ) &\Leftrightarrow& \exists {q_1},{q_2} \in P( {g,h} ),\ s \in \N:\ {q_1}f = {f^{2s}} + {q_2}\\
\label{stengle-2}
f > 0 \text{ on }S({g,h} ) &\Leftrightarrow& \exists {q_1},{q_2} \in P( {g,h} ):\, {q_1}f = 1 + {q_2}\,.
\end{eqnarray}
Notice that the above representations involve a multiplier $q_1$ for $f$ as well as cross-products of the $g_j$'s in \eqref{eq-preordering}. 
In 1993, Putinar \cite{putinar1993positive} refined a result of Schm\"udgen \cite{schmudgen1991thek} for certificates of positivity on a basic semi algebraic set \eqref{eq:S(g,h)} assumed to be compact plus an {\em Archimedean} assumption, described below. It avoids a multiplier for $f$ and no cross-product of the $g_j$'s.
Namely, Putinar's Positivstellensatz states that $f$ is positive on $S(g,h)$ if $f$ belongs to the set
\begin{equation}
\label{cert-put}
Q\left( {g,h} \right): = \left\{ {\sigma_0 + \sum\limits_{j = 1}^m {{\sigma _j}{g_j}}  + \sum\limits_{t = 1}^l {{\phi _t}{h_t}} :\,{\sigma _j} \in \Sigma \left[ x \right],\,{\phi _t} \in \R\left[ x \right]} \right\} \,.
\end{equation}
The set $Q(g,h)$ is called the {\em quadratic module} associated with the polynomials $(g_j,h_t)$.
\paragraph{SOS for optimization.}
More recently and since the pioneer works of Lasserre \cite{lasserre2001global} and Parrilo \cite{phdParrilo}, SOS-based certificates of nonnegativity have now become a powerful tool in polynomial optimization and control.
 In the unconstrained case,  let $f^\star := \inf_{x \in \R^n} f(x)$. If $f-f^\star$ ($\geq0$ on $\R^n$) is an SOS polynomial
 then
$f^\star$ can be obtained by solving a single semidefinite program (SDP). However in general $f-f^\star$ is an SOS of rational functions, which yields:
\begin{equation}
    \label{eq:test}
f^\star\,=\,\displaystyle\sup_{\lambda,\sigma_N,\sigma_D}\,
\{\lambda :  \,\sigma_D\,(f-\lambda)\,=\,\sigma_N\,;\quad \sigma_N,\sigma_D\,\in\,\Sigma[x]\,\}\,
\end{equation}
(plus a normalizing constraint for $\sigma_N$ to avoid the trivial solution $(+\infty,0,0)$). By fixing in advance a bound $d$ on the degree of $\sigma_D$, one may solve 
\eqref{eq:test} by SDP combined with bisection search on $\lambda$ and increase of $d$ when no solution exists.
In the constrained case, let $S(g,h)$ in \eqref{eq:S(g,h)} be compact and assume with no loss of generality that the so-called {\em Archimedean} assumption holds, namely that $L-\| x\|^2_2$ belongs to $Q\left( {g,h} \right)$ for some $L>0$. This can be automatically ensured by  setting $g_m(x)=L-\| x\|^2_2$. 
Under this assumption, the second author provides in \cite{lasserre2001global} a so-called \emph{moment-SOS hierarchy} of semidefinite relaxations based on Putinar's representation, yielding a non-decreasing sequence of lower bounds on ${f^\star} := \min_{x \in S({g,h})}f( x)$, which converges to $f^\star$. 
Generically convergence is finite \cite{nie2014optimality} and  a numerical procedure from \cite{henrion2005detecting} allows one to extract global minimizers from an optimal solution 
of the (exact) semidefinite relaxation in the hierarchy. It relies on the flat extension condition of Curto and Fialkow 
\cite{curto2005truncated,laurent2005revisiting}. In the above-mentioned frameworks, compactness of $S(g,h)$ is crucial.
\paragraph{Related works on SOS approximations of nonnegative polynomials.}
Blekherman's result \cite{blekherman2006there} states that for a fixed degree, the cone of nonnegative polynomials is way larger than the cone of SOS polynomials.
This is in contrast with the denseness result from \cite[Theorem 5]{Berg80}, which establishes that the cone of SOS polynomials is dense in the space of polynomials being nonnegative on $[-1, 1]$, for the $l_1$-norm of coefficients, defined by $\|f\|_1 = \sum_{\alpha} |f_\alpha|$ (whenever one writes $f = \sum_{\alpha} f_\alpha x^{\alpha}$ in the standard canonical basis of monomials).
Other denseness results from \cite{lasserre2007sum,lasserre2007sos} are based on perturbations of nonnegative polynomials to obtain SOS certificates. 
In \cite{lasserre2007sum}, Lasserre states that any given nonnegative polynomial $f$ can be approximated by a sequence $(f_\varepsilon)_{\varepsilon}$ of SOS  polynomials given by
\[{f_\varepsilon } := f + \varepsilon \sum\limits_{k = 0}^{{r_\varepsilon }} {\sum\limits_{j = 1}^n {x_j^{2k}/( {k!} )} } , \ \varepsilon>0 \,,\]
for some $r_\varepsilon \in \N$, so that $\|f_\varepsilon - f\|_1 \to 0$, as $\varepsilon \downarrow 0$.  
Similarly, Lasserre and  Netzer prove in \cite{lasserre2007sos} that every polynomial $f$ being nonnegative on the unit box $[0, 1]^n$ can be approximated in $l_1$-norm by a sequence of SOS 
\[{f_{r \varepsilon}} := f + \varepsilon \left( {1 + \sum\limits_{j = 1}^n {x_j^{2r}} } \right), \ \varepsilon>0  \,.\]
Provided that $r$ is large enough, one has $\|f_{r \varepsilon} - f \|_1 \to 0$, as $\varepsilon \downarrow 0$.
Jibetean and Laurent \cite{jibetean2005semidefinite} compute tight upper bounds for the unconstrained polynomial optimization problem $f^\star:=\min_{x\in\R^n}f(x)$ based on the perturbed problem $f^\star_\varepsilon:=\inf_{x\in\R^n}f(x)+\varepsilon\sum_{j = 1}^n {x_j^{2d + 2}} $ and SDP relaxations over the gradient ideal (see also \cite{nie2006minimizing}). 
Ahmadi and Hall \cite{ahmadi2019construction} provide some positivity certificates on compact domains relying on Artin's and Reznick's results via homogenization. These certificates also require a perturbation and additional dual variables associated with the constraints as well as a homogenization variable. In the end, they build up a convergent hierarchy where they only have to check that some homogeneous polynomial has positive coefficients.
Muramatsu et al. \cite{muramatsu2016perturbed} consider another certificate of positive polynomials on a compact set with a small perturbation. Despite the lack of convergence guarantees, their computational experiments show that their SDP relaxations have often smaller dimension than the ones from Lasserre's hierarchy \cite{lasserre2001global}. This is obtained by restricting each SOS multiplier involved in a Putinar's representation to be supported in a dilation of the associated constraint's support.
Besides their theoretical aspects, these approximation results allow one to interpret some paradoxical behaviors (due to numerical roundoff errors) observed while relying on SDP relaxations for polynomial optimization.
Such behavior occurs for instance while extracting the minimizers of Motzkin's polynomial $f = ( {x_1^2 + x_2^2-1})x_1^2x_2^2+1/27$ with the algorithmic procedure from \cite{henrion2005detecting}. 
Motzkin's polynomial is globally nonnegative but does not belong to the SOS cone. 
However, the perturbed polynomial $\tilde f = f + \varepsilon (1 + x_1^6 + x_2^6)$ is an SOS for small $\varepsilon > 0$.
This implies that an SDP solver can find an approximation of the optimal value of $f$ for a sufficiently high order of relaxation, and that one can extract the global minimizers of $f$. 
In fact ~\cite{lasserre2019sdp}, an SDP solver performs ``robust optimization'' in the following sense: instead of solving the original optimization problem with nominal criterion $f$, the solver considers a perturbed criterion which lies in a ball of small radius $\varepsilon$ and center $f$.
In \cite{navascues2013paradox}, the authors explain a similar paradox occurring in a noncommutative setting. \\
As shown in \cite{magron2018exact}, the user can also introduce perturbations to compensate the numerical uncertainties added by the solver. 
This perturbation/compensation scheme is the main ingredient of the hybrid numeric-symbolic algorithm from \cite{magron2018exact}, designed to compute exact rational SOS decompositions for polynomials lying in the interior of the SOS cone.
\paragraph{The non-compact case.} There have been several attempts to provide a hierarchy of semidefinite relaxations when $S(g,h)$ is not compact. In \cite{jeyakumar2014polynomial} the authors consider polynomial optimization problems with non-compact set in the special case where $S( {g \cup \{ {c - f} \},h} )$ satisfies the Archimedian assumption. 
Later on, Dickinson and Povh \cite{dickinson2015extension} obtain a certificate for homogeneous polynomials positive on the intersection of the nonnegative orthant with  a basic semialgebraic cone.  
This latter result generalizes  P\'olya \cite{polya1928positive} and Reznick's Positivstellensatz \cite{reznick1995uniform}. 
P\'olya's result states that one can always multiply a homogeneous  polynomial positive on the nonnegative orthant by some power of $(x_1 + \dots + x_n)$ to obtain a polynomial with nonnegative coefficients.
Reznick proves that any positive definite form can be multiplied by a certain power of $\|x\|_2^2$ to become a SOS.
For this specific class of nonnegative polynomials, Reznick's result provides a suitable decomposition into SOS of rational functions, which can be practically computed via SDP. 
%
An interesting related result is the Positivstellensatz \cite{putinar1999solving} of Putinar and Vasilescu. Let us define 
\[S(g)\,:=\,\{x:\:g_j(x)\geq0,\: j=1,\ldots,m\} \ \text{and}
\
Q\left( {g} \right)\,: =\, \left\{ {\sigma_0+ \sum\limits_{j = 1}^m {{\sigma _j}{g_j}}  :\;{\sigma _j} \in \Sigma [ x ]} \right\}\,.\]
\begin{theorem}(Putinar-Vasilescu\cite[Corollary 4.3 and 4.4]{putinar1999solving})\label{theo:Putinar.Vasilescu}
Let $\theta\in\R[x]$ be the quadratic polynomial $x\mapsto \theta(x):= 1 + \|x\|_2^2$, and
denote by $\tilde p \in \R[ {x,{x_{n + 1}}} ]$ the homogeneous polynomial associated with $p\in \R[ {x} ]$, defined by $x\mapsto \tilde p(x): = x_{n + 1}^{\deg ( p )}p( {x/{x_{n + 1}}} )$.
\begin{enumerate} 
\item Let $f\in \R[ {x} ]$ such that $\tilde f>0$ on ${\R^{n + 1}}\backslash \{ 0 \}$. 
Then ${\theta ^k}f \in \Sigma [ x ]$ for some $k\in\N$.
\item Let $f,g_1,\dots,g_m \in \R[ {x} ]$ satisfy  the following two conditions:
\begin{enumerate}
\item $f = {f_0} + {f_1}$ such that $\deg ({f_0}) < \deg ( {f_1})$ and ${{\tilde f}_1} > 0$ on ${\R^{n + 1}}\backslash \{ 0 \}$;
\item $f > 0$ on $S(g)$.
\end{enumerate}
Then ${\theta ^{2k}}f \in Q(g)$ for some $k\in \N$.
\end{enumerate}
\end{theorem}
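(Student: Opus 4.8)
The plan is to prove both items by homogenization: pass to $\R[\bar x]$ with $\bar x=(x,x_{n+1})$, reduce the positivity statement to the \emph{compact} unit sphere $S^n\subset\R^{n+1}$, invoke a Positivstellensatz valid on a compact set there, and finally dehomogenize by setting $x_{n+1}=1$. What makes this last move harmless is purely arithmetic: at $x_{n+1}=1$ one has $\|\bar x\|_2^2=1+\|x\|_2^2=\theta$, $\tilde p(x,1)=p(x)$ for every $p\in\R[x]$, and a sum of squares of forms specializes to a sum of squares in $\R[x]$.

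Item~(1) is then immediate. Since $\tilde f>0$ on $\R^{n+1}\setminus\{0\}$, the form $\tilde f$ is positive definite, and in particular $\deg f=\deg\tilde f$ is even (an odd form is sign-changing). Reznick's theorem \cite{reznick1995uniform} gives some $N\in\N$ with $\|\bar x\|_2^{2N}\tilde f\in\Sigma[\bar x]$; dehomogenizing yields $\theta^Nf\in\Sigma[x]$, so $k:=N$ works.

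For item~(2) set $N:=\deg f=\deg f_1$, which is even because $\tilde f_1$ is positive definite. Homogenizing, $\tilde f=x_{n+1}^{N-\deg f_0}\tilde f_0+\tilde f_1$, and for each $j$ put $G_j:=x_{n+1}^{2e_j-\deg g_j}\tilde g_j$ with $e_j:=\lceil\deg g_j/2\rceil$, a form of even degree $2e_j$ for which $G_j(x,1)=g_j(x)$ and $\sign G_j(\bar x)=\sign g_j(x/x_{n+1})$ whenever $x_{n+1}\neq0$. The first thing to check is that $\tilde f>0$ on the compact set $K:=\{\bar x\in S^n:\ G_j(\bar x)\ge0,\ j=1,\dots,m\}$: at a point of $K$ with $x_{n+1}\neq0$ the rescaled point $x/x_{n+1}$ lies in $S(g)$, so hypothesis~(b) gives $f(x/x_{n+1})>0$ and hence $\tilde f(\bar x)=x_{n+1}^Nf(x/x_{n+1})>0$ (here $N$ even is used); at a point with $x_{n+1}=0$ the term $x_{n+1}^{N-\deg f_0}\tilde f_0$ vanishes because $\deg f_0<\deg f_1$, so $\tilde f$ equals $\tilde f_1>0$ there, which is hypothesis~(a). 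Now $K$, being closed in $S^n$, is compact, and the quadratic module generated by the $G_j$ together with the equation $\|\bar x\|_2^2-1=0$ is Archimedean (as $2-\|\bar x\|_2^2=1-(\|\bar x\|_2^2-1)$); hence Putinar's Positivstellensatz \cite{putinar1993positive} yields $\tilde f=\sigma_0+\sum_j\sigma_jG_j+\phi\,(\|\bar x\|_2^2-1)$ with $\sigma_j\in\Sigma[\bar x]$ and $\phi\in\R[\bar x]$.

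The step I expect to be the crux is turning this affine identity into a homogeneous one that simultaneously eliminates the equation $\|\bar x\|_2^2-1=0$ and keeps the multipliers sums of squares after dehomogenization. The plan: adjoin a fresh variable $s$, fix a large even integer $D$, and homogenize every term to degree $D$ in $(\bar x,s)$ through the $\R$-linear map $q\mapsto s^Dq(\bar x/s)$; using homogeneity of $\tilde f$, of each $G_j$, and of $\|\bar x\|_2^2-1$, one obtains an identity in $\R[\bar x,s]$ in which the equation contributes a term $\Phi\cdot(\|\bar x\|_2^2-s^2)$ and each $\sigma_j$ is replaced by a sum of squares of forms. Reducing modulo the ideal $(s^2-\|\bar x\|_2^2)$ annihilates that term, and since $\R[\bar x,s]/(s^2-\|\bar x\|_2^2)$ is a free $\R[\bar x]$-module with basis $\{1,s\}$, equating the components along $1$ leaves a genuine polynomial identity $\|\bar x\|_2^{2k}\tilde f=P_0+\sum_jP_jG_j$ in $\R[\bar x]$, where each $P_j$ is a sum of squares because the $1$-component of $(\alpha+s\beta)^2$ reduces to $\alpha^2+\|\bar x\|_2^2\beta^2$, which is a sum of squares as a product of sums of squares is again one. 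Dehomogenizing ($x_{n+1}=1$) gives $\theta^kf=P_0(x,1)+\sum_jP_j(x,1)\,g_j(x)\in Q(g)$; and if $k$ happens to be odd, multiplying through by $\theta=1+\|x\|_2^2\in\Sigma[x]$ (which maps $Q(g)$ into itself) raises the exponent to $k+1$, so in all cases $\theta^{2k'}f\in Q(g)$ for some $k'\in\N$, which is the assertion.
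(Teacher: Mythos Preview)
Your argument is correct and is essentially the approach the paper itself uses (Jacobi's homogenize--apply Putinar/Reznick--dehomogenize scheme) in proving its effective versions, Theorems~\ref{theo:representation.nonnegative.poly} and~\ref{theo:Representation.nonnegative2}; Theorem~\ref{theo:Putinar.Vasilescu} is only cited from Putinar--Vasilescu and not proved directly in the paper. The one cosmetic difference is in how the sphere constraint is eliminated after applying Putinar: the paper substitutes $(x,x_{n+1})\mapsto L^{1/2}(z,z_{n+1})/\|(z,z_{n+1})\|_2$, multiplies through by $\|(z,z_{n+1})\|_2^K$, and then observes that the part carrying the non-polynomial factor $\|(z,z_{n+1})\|_2$ must vanish, while your auxiliary-variable formulation via the free rank-two module $\R[\bar x,s]/(s^2-\|\bar x\|_2^2)$ is an equivalent and arguably cleaner packaging of the same idea.
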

 
As a consequence, they also obtain:
\begin{corollary}(Putinar-Vasilescu \cite[Final remark 2]{putinar1999solving})  
\label{coro:consequence.Purtina}
Let $\theta := 1 + \|x\|_2^2$.
\begin{enumerate}
\item Let $f \in \R[ x ]_{2d}$ be such that $f\ge 0$ on $\R^n$. 
Then for all $\varepsilon>0$, there exists $k\in \N$ such that ${\theta ^k}( {f + \varepsilon {\theta ^d}} ) \in \Sigma [ x]$.
\item Let $f \in \R[ x ]$ such that $f\ge 0$ on $S( g )$. Let $d\in \N$ such that $2d > \deg ( f )$. 
Then for all $\varepsilon>0$, there exists $k\in \N$ such that ${\theta ^{2k}}( {f + \varepsilon {\theta ^d}} ) \in Q ( g )$.
\end{enumerate}
\end{corollary}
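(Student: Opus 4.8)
The plan is to obtain both items as immediate applications of Theorem~\ref{theo:Putinar.Vasilescu} to the perturbed polynomial
\[
p \,:=\, f + \varepsilon\,\theta^d \,,
\]
so the only real work is to check the hypotheses of that theorem. I would rely on two elementary observations. First, since $\theta = 1 + \|x\|_2^2$ is quadratic and homogenizes to $x_{n+1}^2 + \|x\|_2^2$, the polynomial $\theta^d$ has degree $2d$ and its homogenization is $\widetilde{\theta^d}(x,x_{n+1}) = (x_{n+1}^2 + \|x\|_2^2)^d = \|(x,x_{n+1})\|_2^{2d}$, a positive definite form on $\R^{n+1}$. Second, if $q \in \R[x]$ with $q \geq 0$ on $\R^n$, then its leading form $q_{\deg q}$ also satisfies $q_{\deg q} \geq 0$ on $\R^n$, because $q_{\deg q}(x) = \lim_{t\to\infty} q(tx)/t^{\deg q} \geq 0$.

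For item (1) I would argue as follows. Set $p := f + \varepsilon\theta^d$ and let $f_{2d}$ be the (possibly zero) degree-$2d$ homogeneous part of $f$. The second observation gives $f_{2d} \geq 0$ on $\R^n$, hence the degree-$2d$ part of $p$ is $f_{2d} + \varepsilon\|x\|_2^{2d}$, which is positive definite; in particular $\deg p = 2d$ and its leading form does not degenerate. It then remains to verify $\tilde p > 0$ on $\R^{n+1}\setminus\{0\}$. For $(x,x_{n+1})$ with $x_{n+1}\neq 0$,
\[
\tilde p(x,x_{n+1}) \,=\, x_{n+1}^{2d}\,f(x/x_{n+1}) + \varepsilon\,(x_{n+1}^2+\|x\|_2^2)^d \,\geq\, \varepsilon\,(x_{n+1}^2+\|x\|_2^2)^d \,>\,0 \,,
\]
using $f\geq 0$ on $\R^n$; and for $x_{n+1}=0$, $x\neq 0$, the value $\tilde p(x,0)$ is the leading form of $p$, namely $f_{2d}(x) + \varepsilon\|x\|_2^{2d} > 0$. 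Thus Theorem~\ref{theo:Putinar.Vasilescu}(1) applies to $p$ and yields $k\in\N$ with $\theta^k(f+\varepsilon\theta^d) = \theta^k p \in \Sigma[x]$.

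For item (2) I would again set $p := f + \varepsilon\theta^d$ and split it as $p = f_0 + f_1$ with $f_0 := f$ and $f_1 := \varepsilon\theta^d$. By hypothesis $\deg f_0 = \deg f < 2d = \deg f_1$, and $\tilde f_1 = \varepsilon(x_{n+1}^2 + \|x\|_2^2)^d > 0$ on $\R^{n+1}\setminus\{0\}$, so condition~(a) of Theorem~\ref{theo:Putinar.Vasilescu}(2) holds for $p$. Moreover $p = f + \varepsilon\theta^d \geq \varepsilon > 0$ on all of $\R^n$, hence in particular on $S(g)$, so condition~(b) holds as well. Theorem~\ref{theo:Putinar.Vasilescu}(2) then provides $k\in\N$ with $\theta^{2k}(f+\varepsilon\theta^d) = \theta^{2k} p \in Q(g)$, which is the claim.

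I do not expect a substantial obstacle here: the whole argument is a bookkeeping reduction to Theorem~\ref{theo:Putinar.Vasilescu} (which is presumably why Putinar and Vasilescu record it only as a final remark). The one point that deserves a line of care is in item (1), namely making sure the perturbation $\varepsilon\theta^d$ genuinely dominates so that the leading form of $p$ cannot collapse --- this is exactly where nonnegativity of $f$, hence of $f_{2d}$, enters --- and writing the homogenization correctly as $\tilde p = x_{n+1}^{2d} f(x/x_{n+1}) + \varepsilon\|(x,x_{n+1})\|_2^{2d}$ on the locus $x_{n+1}\neq 0$. The exponent $k$ obtained is simply the one furnished by Theorem~\ref{theo:Putinar.Vasilescu}; it depends on $f$, $d$, $g$ and $\varepsilon$ and is not made explicit by this argument.
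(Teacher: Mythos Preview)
Your derivation is essentially correct and matches the route the paper implicitly indicates: the corollary is stated there as a direct consequence of Theorem~\ref{theo:Putinar.Vasilescu} (``As a consequence, they also obtain:''), with no separate proof given. Verifying the hypotheses of Theorem~\ref{theo:Putinar.Vasilescu} for the perturbed polynomial $p=f+\varepsilon\theta^d$, exactly as you do, is the intended argument.

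One small slip in item~(2): you write ``$p = f + \varepsilon\theta^d \geq \varepsilon > 0$ on all of $\R^n$'', but the hypothesis only gives $f\geq 0$ on $S(g)$, not on $\R^n$. This is harmless, since condition~(b) of Theorem~\ref{theo:Putinar.Vasilescu}(2) only asks for $p>0$ on $S(g)$, and there indeed $f\geq 0$ and $\theta^d\geq 1$ give $p\geq\varepsilon>0$; just correct the wording.

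It is worth noting that the paper's own contribution (Theorems~\ref{theo:representation.nonnegative.poly} and~\ref{theo:Representation.nonnegative2}) reproves these two statements by a different, constructive route---homogenizing and applying Reznick's Lemma~\ref{lem:homogeneous.sos} or Putinar's Positivstellensatz directly---in order to extract explicit degree bounds on $k$ and on the SOS multipliers. Your argument, by contrast, inherits whatever non-effective $k$ Theorem~\ref{theo:Putinar.Vasilescu} provides, which is exactly appropriate for the corollary as stated.
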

Marshall \cite[Corollary 4.3]{marshall2003approximating} states a slightly more general result but with no explicit $d$, and Schweighofer \cite[Corollary 6.3]{schweighofer2003iterated} provides a new algebraic proof of Marshall's result.
To summarize, for every polynomial $f$ nonnegative on a general basic semialgebraic set $S(g)$, one obtains the following representation result: 
for a given $\varepsilon>0$, there exist a nonnegative integer $k$ and SOS polynomials $\sigma_0, \sigma_1, \dots, \sigma_m$, such that
\begin{equation}\label{eq:rational.rep}
f + \varepsilon \theta^d  = \frac{{{\sigma _0} + {\sigma _1}{g_1} + \dots + {\sigma _m}{g_m}}}{{{\theta ^k}}} \,.
\end{equation}
Although this representation is theoretically attractive, their previous proofs are not constructive and do not provide any explicit algorithm, especially in polynomial optimization. 
The underlying reason is that there is no degree bounds on the SOS weights  $\sigma_0,\dots,\sigma_m$. 
Therefore, checking membership of  ${\theta ^k}( {f + \varepsilon {\theta ^d}} )$ in $Q(g )$ is still a challenge.
If one fixes a degree $k$ which is not large enough then one would have to increase the degree of the $\sigma_j$'s forever without getting an answer.
This restriction comes from the proof techniques used by Putinar and Vasilescu, Marshall, and Schweighofer used in \cite{putinar1999solving}, \cite{marshall2003approximating} and \cite{schweighofer2003iterated}, respectively. 
The main idea of Putinar and Vasilescu is to localize the ring of polynomials by allowing inverses of ${\| {( {x,{x_{n + 1}}} )} \|^2_2}$ and
then to use  Cassier's technique  for separating two convex cones of rational functions. 
Marshall proves Corollary \ref{coro:consequence.Purtina} by applying the generated Jacobi-Prestel criterion.
Schweighofer's proof for representation \eqref{eq:rational.rep} is based on the relationship between the subring of bounded elements and the subring of arithmetically bounded elements, together with  Monnier's conjecture.
From their proofs it is possible but quite difficult to find  degree bounds for the SOS weights $\sigma_j$ with respect to the input polynomial data $f,g_j$.

\paragraph{\textbf{Contribution}.} 
As already mentioned, our approach is to 
treat the non-compact case frontly and avoid the big-ball trick. Our contribution is threefold:

{\bf I.} In Section~\ref{sec:representations} we first provide
an alternative (and simpler) proof of \eqref{eq:rational.rep} and {\em with an explicit degree bound} on the SOS multipliers; this is crucial as it has immediate implications on the algorithmic side.
 More precisely, the degrees of SOS multipliers $\sigma_j$ 
are bounded above by $k + d - \lceil {\deg ( {{g_j}} )/2} \rceil $.
Our proof for \eqref{eq:rational.rep} relies on Jacobi's technique in the proof of \cite[Theorem 7]{jacobi2001representation}. First, one transforms the initial polynomials to homogeneous forms, then one relies on Putinar's Positivstellensatz for the compact case, and finally one transforms back the obtained forms to dehomogenized polynomials.  
As a consequence, with $\varepsilon>0$ fixed, arbitrary, this degree bound allows us to provide hierarchies $(\rho _k^i(\varepsilon))_{k\in\N}$, $i=1,2,3$ for unconstrained polynomial optimization ($m=0$ and $i=1$, see Section~\ref{sec:unconstrained_hierarchy}) as well as for constrained  polynomial optimization ($m\geq 1$ and $i=2,3$, see Section~\ref{sec:constrained_hierarchy}). 
Computing each $\rho _k^i(\varepsilon)$ boils down to solving a single SDP, with strong duality property. 
For $k$ sufficiently large, $\rho _k^i(\varepsilon)$ becomes an upper bound for the optimal value $f^\star$ of the corresponding polynomial optimization problem (POP) $\min_{x\in S(g)}f(x)$.
If this problem has an optimal solution $x^\star$, the gap between $\rho _k^i(\varepsilon)$ and $f^\star$ is at most $\varepsilon \theta (x^\star)^d$. The related convergence rates are also analyzed in these sections.

{\bf II.} In Section \ref{sec:optimizers}, we provide a new algorithm to find a feasible solution
in the set $S(g,h)$ defined in \eqref{eq:S(g,h)}. 
The idea is to include appropriate additional spherical equality constraints $\varphi_t:={\xi _t} - \| {x - {a_t}} \|_2^2$, $t = 0,\dots,n$, in $S( {g,h})$ so that the system $S( {g,h\cup \{ {{\varphi _0},\dots,{\varphi _n}} \}} )$ has a unique real solution. The nonnegative reals $({\xi _t})_{t=0,\dots,n}$ are computed with an adequate moment-SOS hierarchy. 
Moreover, this solution can be extracted  by 
checking whether some (moment) matrix satisfies a flat extension condition. 

{\bf III.} Finally we use this method to approximate a global minimizer of $f$ on $S(g,h)$.
Namely, we fix $\varepsilon>0$ small and find a point in $S( {g\cup \{ {\rho _k^i(\varepsilon) - f} \},h } )$. 
This procedure works generically, no matter if the set of minimizers is infinite. This is in deep contrast with the extraction procedure
of \cite{henrion2005detecting} (via some flat extension condition) which works only for finite solution sets.
Assuming that the set of solutions is finite, one may compare our algorithm with the procedure from \cite{henrion2005detecting} as follows.
On the one hand, the latter extraction procedure provides global optimizers, provided that one has solved an SDP-relaxation with sufficiently large ``$k$" (so as to get an appropriate rank condition). 
On the other hand,  our algorithm 
that adds spherical equality constraints ``divides" the problem into $n+1$ SDP relaxations with additional constraints 
but with smaller order ``$k$'' (which is {\em the} crucial parameter for the SDP solvers).
Numerical examples are provided in Section \ref{sec:benchs}
to illustrate the difference between these two strategies.

For clarity of exposition, most proofs are postponed in Section \ref{sec:Appendix}.
\paragraph{\textbf{Comparison to other methods for solving POPs on non-compact semialgebraic sets}.}
We consider the general POP ${f^\star} = \inf \{ {f( x ):\, x \in S( {g,h} )} \}$ where $S( {g,h})$ are unbounded.
\begin{enumerate}
\item In \cite{jeyakumar2014polynomial, jeyakumar2016solving}, Jeyakumar et al. solve a POPs on an unbounded semialgebraic set after checking the coercivity of $f$ on $S( {g,h})$ and as well as the  Archimedeanness of $S( {g \cup \{ {c - f} \},h})$ for some $c\ge f(\bar x)$ with $\bar x \in S(g,h)$. 
In particular, \cite{jeyakumar2016solving} uses the
polynomial optimization solver SparsePOP developed by Waki \cite{waki2008algorithm} which exploits a structured sparsity of $f$, $g$ and $h$. However checking these
conditions can be difficult.
Our method which solves SDPs for the hierarchy $(\rho _k^i( \varepsilon  ))_{k\in\N}$ avoids checking coercivity and Archimedean assumptions. 
\item Demmel et al. \cite{demmel2007representations, nie2006minimizing} provide two representations of polynomials positive (resp. nonnegative) on $S( {g,h})$ for solving POPs on unbounded domains. 
They state that $f$ can be represented as an SOS of polynomials modulo the KKT ideal on $S(g,h)$ if the minimal value of $f$ on $S(g,h)$ is attained at some KKT point and assuming that one of the following conditions holds:
\begin{enumerate}
\item $f>0$ on $S(g,h)$;
\item $f\ge 0$ on $S(g,h)$ and the KKT ideal is radical.
\end{enumerate}
This method is restricted to the case of global minimums satisfying KKT condition and testing if $f$ belongs to the related KKT preorder requires  a large number of SOS multipliers. Moreover, checking the radical property of the KKT ideal is difficult in general. 
Our method goes beyond these restrictions by only testing membership of the  perturbation of $\theta^k f$ in the truncation of $Q( {g,h})$, even if the KKT condition is not satisfied. Reader may have a look at Ha and Pham \cite[\S 3.3]{vui2010representations} with the same comparison as to Demmel et al. \cite{demmel2007representations}.
\item Schweighofer \cite{schweighofer2006global} extends Nie et al. \cite{nie2006minimizing} ($S(g,h)=\R^n$) to the case that $f$ is bounded from below but does not necessarily attain a minimum. Schweighofer's gradient tentacles method replaces the gradient variety by larger semialgebraic sets. Here we 
assume that $f$ attains its minimum and compute an approximation of the  optimal value $f^\star$ as well as an approximation of some minimum  $x^\star$.
\item Greuet et al. \cite{greuet2014probabilistic} provide a probabilistic algorithm for solving POP on a real algebraic set $S(g,h)=V(h)$. 
They can extract a solution under the following assumptions: $\langle h \rangle $ is radical, $V( h )$ is equidimensional of dimension $d > 0$ and $V( h )$ has finitely many singular points. 
These conditions are also difficult to check in practice and are not required for our method.
\end{enumerate}
\section{Notation, definitions and preliminaries}
\label{sec:Preliminaries}
In this section, we introduce notation and basic facts about polynomial optimization and the moment-sums-of-squares (moment-SOS) hierarchy.
With $x = (x_1,\dots,x_n)$, let $\R[x]$ stands for the ring of real polynomials and let 
$\Sigma[x]\subset\R[x]$ be the subset of SOS polynomials.
Let us note $\R[x]_t$ and $\Sigma[x]_t$ the respective restrictions of these two sets to polynomials of degree at most $t$ and $2t$. 
Given $\alpha = (\alpha_1,\dots,\alpha_n) \in \N^n$, we note $|\alpha| := \alpha_1 + \dots + \alpha_n$.
Let $(x^\alpha)_{\alpha\in\N^n}$ 
be the canonical basis of monomials for  $\R[x]$ (ordered according to the graded lexicographic order) and 
$v_t(x)$ be the vector of monomials up to degree $t$, with length $s(t) = {n+t\choose n}$.
A polynomial $f\in\R[x]_t$ is written as  
$f(x)\,=\,\sum_{| \alpha | \leq t} f_\alpha\,x^\alpha\,=\,\mathbf{f}^Tv_d(x)$, 
where $\mathbf{f}=(f_\alpha)\in\R^{s(t)}$ is its vector of coefficients in the canonical basis.
A polynomial $f$ is \emph{homogeneous} of degree $t$ if
$f(\lambda x)=\lambda^t f(x)$ for all $x\in\R^n$ and each $\lambda\in\R$. Equivalently, a homogeneous polynomial can be written as 
$f = \sum_{| \alpha | = t} {f_\alpha x^\alpha }$. The
{\em degree-$d$ homogenization} $\tilde f$ of $f\in\R[x]_d$ is a  homogeneous polynomial of degree $d$ in $n+1$ variables, defined by 
$\tilde f ( x,x_{n + 1}) = x_{n + 1}^{d} f ( x/x_{n + 1} )$.
A {\em positive form} is a nonnegative homogeneous polynomial which is positive everywhere except at the origin. 
\paragraph{Moment and localizing matrix.} 
For a given real-valued sequence $y=(y_\alpha)_{\alpha\in\N^n}$, let us define the Riesz linear functional $L_y:\R[ x ] \to \R$ by:
\[f\mapsto {L_y}( f ) := \sum_{\alpha} f_\alpha y_\alpha,\quad \forall f\in\R[x]\,.\]  
We say that a real infinite (resp. finite) sequence $( y_\alpha)_{\alpha  \in \N^n}$ (resp. $( y_\alpha)_{\alpha  \in \N^n_t}$) has a representing measure if there exists a finite Borel measure $\mu$ such that $y_\alpha  = \int_{\R^n} {x^\alpha d\mu(x)}$ is satisfied for every $\alpha  \in {\N^n}$ (resp. $\alpha  \in {\N^n_t}$). In this case, $( y_\alpha)_{\alpha  \in \N^n}$ is called be the moment sequence of $\mu$. 
Next, given $y=(y_\alpha)_{\alpha  \in \N^n}$ and $d\in \N^*$,
the moment matrix $M_d(y)$ of degree $d$ associated to $y$  is the real symmetric matrix of size $s(d)$ defined by $M_d(y) := ( y_{\alpha  + \beta })_{\alpha,\beta\in \N^n_d} $. 
Let  $g = \sum_{\gamma} g_\gamma x^\gamma  \in \R[x]$. 
The localizing matrix $M_d(gy)$ of degree $d$ associated with $y$ and $g$ is the real symmetric matrix of the size $s(d)$ given by $M_d(gy) = (\sum_\gamma  {{g_\gamma }{y_{\gamma  + \alpha  + \beta }}})_{\alpha, \beta\in \N^n_d}$.
We next recall an important result of \cite{putinar1993positive}, which is crucial for convergence of the moment-SOS hierarchy provided in the sequel. 
\begin{theorem}
\label{theo:Putinar's.Positivstellensatz2}(Putinar \cite{putinar1993positive})
Given $g_1,\ldots,g_m, h_1,\ldots,h_l \in \R[x]$, let
$S(g,h)\subset\R^n$ be as in \eqref{eq:S(g,h)} and $Q(g,h)\subset\R[x]$ be as in \eqref{cert-put}.
Assume that there exists $L > 0$ such that $L - \|x\|_2^2 \in Q( g,h )$ (Archimedean condition). 

\noindent (i) If a polynomial $f\in \R[ x ]$ is  positive on $S( g,h )$ then
$f\in Q(g,h)$.

\noindent(ii) A real sequence $y=(y_\alpha)_{\alpha\in\N^n}$ has a representing measure $\mu$ on $S(g,h)$ 
if $M_d(y)\succeq0$, $M_d(g_j\,y)\succeq0$, $j=1,\ldots,m$, and
$M_d(h_t\,y)=0$, $t=1,\ldots,l$, for all $d\in\N$.
\end{theorem}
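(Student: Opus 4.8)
The plan is to establish part (ii) first, via the Gelfand--Naimark--Segal construction together with the spectral theorem, and then to deduce part (i) from (ii) by a Hahn--Banach separation argument; this is essentially Putinar's original route.

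For (ii), I would begin from a sequence $y=(y_\alpha)_{\alpha\in\N^n}$ satisfying $M_d(y)\succeq0$, $M_d(g_j\,y)\succeq0$ and $M_d(h_t\,y)=0$ for all $d\in\N$, and first translate these into the functional form $L_y(\sigma)\geq0$, $L_y(g_j\,\sigma)\geq0$ for all $\sigma\in\Sigma[x]$, and $L_y(h_t\,p)=0$ for all $p\in\R[x]$; in particular $L_y\geq0$ on $Q(g,h)$. The bilinear form $(p,q)\mapsto L_y(p\,q)$ on $\R[x]$ is then positive semidefinite, so one can quotient by its kernel $N_y:=\{p:L_y(p^2)=0\}$ and complete to obtain a real Hilbert space $H$ with distinguished vector $\bar 1$. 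Each multiplication map $\bar p\mapsto\overline{x_i\,p}$ defines a symmetric operator $M_i$ on $\R[x]/N_y$, and here the Archimedean hypothesis $L-\|x\|_2^2\in Q(g,h)$ enters decisively: since $(L-\|x\|_2^2)\,p^2\in Q(g,h)$ one gets $\sum_{i=1}^n\|M_i\bar p\|^2=L_y(\|x\|_2^2\,p^2)\le L\,L_y(p^2)=L\,\|\bar p\|^2$, so the $M_i$ extend to bounded, pairwise commuting, self-adjoint operators on $H$ with joint spectrum inside the ball of radius $\sqrt L$. Applying their joint spectral measure to $\bar 1$ yields a finite positive Borel measure $\mu$, supported in that ball, with $\mu(\R^n)=y_0$ and $L_y(p)=\int p\,d\mu$ for all $p\in\R[x]$. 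To conclude I would use $\int h_t^2\,d\mu=L_y(h_t^2)=0$ to get $h_t=0$ $\mu$-a.e., and $\int g_j\,p^2\,d\mu=L_y(g_j\,p^2)\ge0$ for all $p$, together with density of polynomials in $C(\mathrm{supp}\,\mu)$, to get $g_j\ge0$ $\mu$-a.e.; hence $\mathrm{supp}\,\mu\subseteq S(g,h)$.

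For (i), suppose $f>0$ on $S(g,h)$ and, for contradiction, $f\notin Q(g,h)$. The Archimedean hypothesis implies that for every $p\in\R[x]$ there is $N$ with $N\pm p\in Q(g,h)$, whence $1\pm t\,p\in Q(g,h)$ for all sufficiently small $t>0$; thus $1$ lies in the algebraic interior of the convex cone $Q(g,h)$. By a Hahn--Banach separation theorem (applicable precisely because of this algebraic interior point) there is a nonzero linear functional $L:\R[x]\to\R$ with $L\ge0$ on $Q(g,h)$ and $L(f)\le0$; positivity on $Q(g,h)$ forces $L(1)\ge0$, and $L(1)=0$ would force $L\equiv0$, so one may normalize $L(1)=1$. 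Writing $y_\alpha:=L(x^\alpha)$, the condition $L\ge0$ on $Q(g,h)$ is precisely encoded by $M_d(y)\succeq0$, $M_d(g_j\,y)\succeq0$ and $M_d(h_t\,y)=0$ for all $d$; by part (ii), $y$ is the moment sequence of a positive measure $\mu$ with $\mu(\R^n)=1$ supported on $S(g,h)$, and then $L(f)=\int f\,d\mu>0$, contradicting $L(f)\le0$. Hence $f\in Q(g,h)$.

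The step I expect to be the main obstacle is the one inside (ii) that converts the positive functional $L_y$ into a representing measure with compact support: this is exactly where the Archimedean assumption cannot be dispensed with, since it is what promotes the multiplication operators to \emph{bounded} self-adjoint operators, makes the spectral theorem available, and confines the support to a ball. By contrast, the dictionary between positive semidefiniteness of the moment and localizing matrices and positivity of $L_y$ on $Q(g,h)$, the almost-everywhere sign conditions that pin $\mathrm{supp}\,\mu$ to $S(g,h)$, and the separation step in (i) are all routine once that analytic core is in place.
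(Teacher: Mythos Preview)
Your proof plan is correct and is essentially Putinar's original argument, but there is nothing to compare it against: the paper does not prove Theorem~\ref{theo:Putinar's.Positivstellensatz2} at all. It is stated in Section~\ref{sec:Preliminaries} purely as a recalled background result from \cite{putinar1993positive}, with no proof or proof sketch supplied, and is then used as a black box (notably in the proof of Theorem~\ref{theo:Representation.nonnegative2}). So your GNS/spectral-theorem route for (ii) followed by Hahn--Banach separation for (i) is fine, and indeed matches the classical proof, but it goes well beyond what the paper itself undertakes.
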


\paragraph{The moment-SOS hierarchy.}
%
Let $S(g,h)\subset\R^n$ be as in \eqref{eq:S(g,h)} and $Q(g,h)\subset\R[x]$ be as in \eqref{cert-put} and assume that $Q(g,h)$ is Archimedean.
\if{
$S( g)\, : = \{ {x \in {\R^n};\ {g_j}( x) \ge 0} \}$
 associated with $g = \{ {{g_1},\dots,{g_m}} \} \subset \R[ x ]$. 
For every $d\in \N$, let us consider the truncated quadratic module 
\[
{{ Q}_d}\left( g \right): = \left\{ {{\sigma _0} + \sum\limits_{j = 1}^m {{\sigma _j}{g_j}} ;\ {\sigma _0} \in \Sigma {{\left[ x \right]}_d},\ {\sigma _j} \in \Sigma {{\left[ x \right]}_{d - \left\lceil {\deg \left( {{g_j}} \right)/2} \right\rceil }}} \right\} \,.
\]
}\fi
Consider the POP:
\begin{equation}
    \label{def-pb}
  f^\star := \min \{ {f(x):\ x \in S(g,h)}\}\,,
\end{equation}
known to be challenging as it is NP hard in general~\cite{laurent2009sums}.
One may rewrite \eqref{def-pb} as: 
\begin{equation}
\label{eq:def-dual}
f^\star=\sup \{ {\lambda\in \R :\ f - \lambda  > 0\text{ on }S(g,h)} \}\,,
\end{equation}
and by invoking Theorem \ref{theo:Putinar's.Positivstellensatz2}(i), $f^\star=\sup\{ {\lambda \in \R :\ f - \lambda  \in Q(g,h)}\}$.  
For each $d\in\N$, let:
\begin{equation}
\label{eq:rho-d}
  \rho_d:=\sup \{ {\lambda \in \R: \ f - \lambda  \in Q_d( g,h)} \}\,,\quad d\in\N\,,  
\end{equation}
where ${Q_d}(g,h)$ stands for the truncated quadratic module of order $d$:
\[Q_d(g,h): = \left\{ {\sum\limits_{j = 0}^m {{\sigma _j}{g_j}}  + \sum\limits_{t = 1}^l {{\phi _t}{h_t}} :\;{\sigma _j} \in \Sigma {[ x]_{d - {u_j}}}\,,\ {\phi_t} \in \R[x]_{2d - 2{w_t}}}\right\}\,,\]
with ${u_j}: = \lceil {\deg (g_j)/2} \rceil$  and ${w_t}: = \lceil {\deg( h_t)/2}\rceil $. For each fixed $d$, problem \eqref{eq:rho-d} is a reinforcement of \eqref{eq:def-dual}
and so $\rho_d\leq f^\star$ for all $d$. 
It also turns out that \eqref{def-pb} can also be written as:
\begin{equation}
   f^\star=\mathop {\inf }\limits_{\mu  \in \cM (S( g,h))} \int_{S(g,h)} {fd\mu } \,,
\end{equation}
where $\cM( {S(g,h)})$ is the set of all finite Borel measures supported on $S(g,h)$. Let us denote by $\R^{\N^n}$ the set of all real sequences ordered by ${\N^n}$. With $y=( y_\alpha)_{\alpha  \in {\N^n}}\in \R^{\N^n}$  being the moment sequence of a measure $\mu$, one has: 
\begin{equation}
    \label{eq:measure}
f^\star = \min \{ {  L_y(f) :\, y\in \R^{\N^n}{\text{ has a representing measure in }}\cM (S( g,h))} \}\,,
\end{equation}
and by Theorem \ref{theo:Putinar's.Positivstellensatz2}(ii):
\begin{equation}
    \label{eq:moment}
\begin{array}{rl}
{{f^\star} = \mathop {\min }\limits_{y \in {\R^\N }} }& L_y(f) \\
\qquad {{\text{s.t. }}}& M_d ( {{g_j}y}) \succeq 0\,, \, \forall d \in \N \,,\\
{}& M_d( {{h_t}y}) 
 = 0\,,\, \forall d \in \N\,,\\
 &y_0=1\,.
\end{array}\end{equation}
Let $d_{\min} := \max \{\lceil \deg(f) / 2  \rceil,  u_j, w_t \}$. For every fixed $d \geq d_{\min}$, consider the finite truncation of the above problem:
\begin{equation}
    \label{eq:mom-trunc}
    \begin{array}{rl}
{\tau_d} := \mathop {\min }\limits_{y \in {\R^{s( 2d)} }} & L_y(f)\\
\qquad \text{s.t. }& M_{d - u_j } ( {{g_j}y} ) \succeq 0\,,\\
&M_{d - w_t }( {{h_t}y} )   = 0\,,\\
 &y_0=1\,.
\end{array}
\end{equation}
Then \eqref{eq:mom-trunc} is a semidefinite relaxation of \eqref{eq:moment} and  is the dual of \eqref{eq:rho-d}. Moreover, strong duality holds according to Josz and Henrion \cite{josz2016strong}. 
Therefore, one has $\rho_d=\tau_d\leq f^\star$ for all $d$. 
This primal-dual sequence of semidefinite programs \eqref{eq:rho-d}-\eqref{eq:mom-trunc} 
is the so-called moment-SOS hierarchy for optimization (also known as ``Lasserre's hierarchy''), and
\[f^\star\,=\,\lim_{d\to\infty}\rho_d\,=\,\lim_{d\to\infty}\tau_d\,.\]
For more details on the moment-SOS hierarchy,  the interested reader is referred to \cite{lasserre2001global}. 
\if{
Geometrically speaking, one can approximate the closure of the convex hull of $S(g,h)$ by the set $\Omega_d$ of all $\bar x\in \R^n$ such that there exists a feasible solution $y$ of (\ref{eq:mom-trunc}) satisfying $\bar x_j=L_y(x_j)$, $j=1,\dots,n$. 
The set $\Omega_d$ is called a semidefinite representation of $S(g,h)$, which is firstly introduced in \cite{lasserre2009convex} and extended by Guo et al. \cite{guo2015semidefinite} for the case of non-compact $S(g,h)$ under the conditions of pointedness and closedness at infinity.
}\fi
\paragraph{Complexity of Putinar's Positivstellensatz.}
Let $c_n(\alpha) := \frac{|\alpha|!}{\alpha_1!\dots\alpha_n!}$ for each $\alpha \in \N^n$. 
We note $\| h \|_{\max}: = \mathop {\max }\limits_\alpha  \left\{ {{| {{h_\alpha }} |} / c_n(\alpha)} \right\}$, for a given $h \in \R[x]$.
The convergence rate of the sequence $(\rho_d)_{d\in\N}$ relies on the following result.
\begin{theorem}(Nie-Schweighofer,\cite{nie2007complexity})\label{theo:Complexity.Putinar}
Assume that $\emptyset \ne S(g,h)\subset (-1,1)^n$ is Archimedean and $f^\star>0$. Then there exists $C>0$ depending on $g$ and $h$ such that for $d\in\N$ and
\[d\ge C\exp \left( \left( \deg(f)^2 n^{\deg(f)}(f^\star )^{-1} \| f  \|_{\max} \right)^C \right)\,,\]
one has $ f \in  {Q_{ d}}( g,h )$.

\end{theorem}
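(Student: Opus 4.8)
Here is how I would prove this, following Nie and Schweighofer. The plan is to reduce positivity on $S(g,h)$ to a quantitative form of P\'olya's theorem on a standard simplex, and then to convert the resulting Schm\"udgen-type certificate into a quadratic-module certificate using the Archimedean hypothesis. First I would normalize the data. Since $Q(g,h)$ is Archimedean there are $L>0$ and $c_0\in\N$, depending only on $g$ and $h$, with $L-\|x\|_2^2\in Q_{c_0}(g,h)$; because the target constant $C$ is allowed to depend on $(g,h)$, I may treat $L-\|x\|_2^2$ as an auxiliary available constraint at bounded cost, and after rescaling assume $S(g,h)$ lies well inside the unit cube. Since $f\ge f^\star>0$ on $S(g,h)$ and each $Q_d(g,h)$ is a convex cone, a small additive correction at the very end will be affordable.

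Next I would homogenize. Turning the constraints $g_j\geq 0$, the equalities $h_t=0$ written as the pairs $\pm h_t\geq 0$, and the ball constraint into a single homogeneous system whose feasible set is (the cone over) a standard simplex $\Delta_N\subset\R^N$, with $N$ bounded linearly in $n$ and in the number of constraints, I obtain a form $F$ of degree $\nu$ comparable to $\deg(f)$ that is strictly positive on $\Delta_N$. Its minimum $\lambda>0$ on $\Delta_N$ is bounded below by a fixed power of $f^\star$ times a constant depending only on $(g,h)$, and its P\'olya norm $\|F\|$ is controlled by $\|f\|_{\max}$ up to a factor polynomial in $\nu!$ and in $\binom{N+\nu}{\nu}$; since $N$ is linear in $n$, this binomial coefficient is bounded by $(\text{const}\cdot n)^{\deg(f)}$, which is exactly where the $n^{\deg(f)}$ of the statement comes from.

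Then comes the analytic core: the quantitative P\'olya theorem of Powers and Reznick, which states that for a form $F$ of degree $\nu$ on $\Delta_N$ with $\min_{\Delta_N}F=\lambda>0$ the form $(x_1+\dots+x_N)^kF$ has nonnegative coefficients as soon as $k\geq \tfrac{\nu(\nu-1)}{2}\,\tfrac{\|F\|}{\lambda}-\nu$; a form with nonnegative coefficients trivially lies in the preordering generated by the facets $x_i\geq 0$, with degree $k+\nu$. Dehomogenizing, undoing the substitutions, and using convexity of the cone to absorb the correction term, this yields a representation of $f$ inside the preordering $P(g\cup\{L-\|x\|_2^2\},h)$ with a degree bounded by a polynomial in $\nu$, $N$ and $\|F\|/\lambda$, hence by a polynomial in $n$, $\deg(f)$ and $(f^\star)^{-1}\|f\|_{\max}$ --- still without the outer exponential.

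The last step, which I expect to be the real obstacle, is to pass from this preordering certificate to a genuine quadratic-module certificate, that is, from membership in $P(g,h)$ to membership in $Q_d(g,h)$, using only that $Q(g,h)$ is Archimedean. Each term $\sigma\,g_{j_1}\cdots g_{j_r}$ of the Schm\"udgen representation must be absorbed into $Q(g,h)$; one does this with the help of fixed identities $N_j-g_j^2\in Q(g,h)$ coming from Archimedeanness together with quantitative \L{}ojasiewicz-type inequalities controlling the $g_j$ near their zero sets, at the price of a degree increase that is controlled but not polynomial. Iterating over the at most $2^m$ products is what turns the polynomial bound above into the announced $C\exp\!\left((\deg(f)^2\,n^{\deg(f)}(f^\star)^{-1}\|f\|_{\max})^{C}\right)$, the constant $C$ absorbing the Archimedean data of $(g,h)$, the dimension-dependent overheads of the homogenization, and the blow-up of this final conversion. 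The delicate bookkeeping is threefold: every constant must depend on $(g,h)$ only and never on $f$; the $f$-dependence must be confined to the single quantity $\deg(f)^2\,n^{\deg(f)}(f^\star)^{-1}\|f\|_{\max}$; and the preordering-to-module conversion, which is genuinely expensive, is what forces an exponential rather than polynomial final bound.
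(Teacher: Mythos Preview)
The paper does not prove this statement at all: Theorem~\ref{theo:Complexity.Putinar} is quoted verbatim from Nie and Schweighofer~\cite{nie2007complexity} as an external input, and the paper only \emph{uses} it (in the proof of Proposition~\ref{prop:Complexity.Putinar-Vasilescu}) without reproving it. So there is no ``paper's own proof'' to compare against.

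That said, your sketch is a reasonable high-level outline of the original Nie--Schweighofer argument, and the structure you describe---quantitative P\'olya/Powers--Reznick giving a Schm\"udgen-type certificate with polynomial degree, followed by a preordering-to-quadratic-module conversion that costs an exponential---is indeed where the exponential in the bound comes from. Two cautions if you want to turn this into an actual proof rather than a plan. First, the passage from the simplex representation back to a certificate in $P(g,h)$ is more delicate than ``dehomogenizing and undoing the substitutions'': Schweighofer's earlier complexity analysis of Schm\"udgen's Positivstellensatz already handles this carefully, and Nie--Schweighofer build directly on that result rather than redoing the P\'olya step from scratch. Second, the conversion from the preordering to the quadratic module is not done by ``iterating over the at most $2^m$ products'' with \L{}ojasiewicz-type estimates as you suggest; the actual mechanism uses the Archimedean identity to rewrite each product $g_ig_j$ via $2g_ig_j = g_i^2 + g_j^2 - (g_i-g_j)^2$ together with bounds of the form $N-g_j^2\in Q(g,h)$, and the degree growth is tracked through a recursion whose solution is exponential. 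Your description of this step is too vague to count as a proof, and the \L{}ojasiewicz route you hint at is not the one taken.
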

\paragraph{Extraction of global minimizers.}
Assume that the optimal value $\tau_d$ is reached for a solution $y^\star$ and that this solution  satisfies the flat extension condition, that is: 
\[
\rank( {{M_d}( y^\star)}) = \rank( {M_{d - w}( y^\star)})\,,
\] 
for some $d\ge w:=\max \{ {{u_i},{w_j}}\}$.
Let $r:=\rank( {{M_d}( y^\star)})$ and let $\delta _a$ stands for the Dirac measure at point $a\in \R^n$. 
Then $\tau_d=f^\star$ and there exist $x^{(j)}\in\R^n$ and $\lambda_j\geq0$, $j=1,\dots,r$, with $\sum_{j = 1}^{r} {\lambda_j}=1$ such that and the sequence $y^\star$ has the representing $r$-atomic measure $\mu  = \sum_{j = 1}^{r} {{\lambda_j}{\delta _{  x^{(j)} }}} $. 
The support of $\mu$ is the set $\{ {x^{(j)}:\ j = 1,\dots,r} \}$, which belongs to the set of all minimizers of the original problem \eqref{def-pb}. 
Henrion and Lasserre  \cite{henrion2005detecting} provide a numerical algorithm to extract 
the $r$ atoms corresponding to the support of the atomic measure $\mu$.
\section{Representation theorems}
\label{sec:representations}
In this section we provide two exact representations of  globally nonnegative polynomials and polynomials nonnegative on basic semialgebraic sets (not necessarily compact). 
The representations are obtained thanks to a perturbation argument as well as existing representations for
positive definite forms. 
Let $\theta := 1 + {\| x \|_2^2}$. 
We denote by $\mathbb S^{n-1}$ the unit sphere in $\R^n$. For each $h\in\R[x]$, let
\[\delta(h):=\frac{{\sup \left\{ {h(x):x \in  \mathbb S^{n-1}} \right\}}}{{\inf \left\{ {h(x):x \in \mathbb S^{n-1}} \right\}}}\,.\]
%
For later use recall the following theorem.
\begin{lemma}(Reznick \cite[Theorem 3.12]{reznick1995uniform})\label{lem:homogeneous.sos}
Suppose that $p \in \R[ x]$ is a positive definite form of degree $2d$, for some $d\in\N$. 
Then for $k \in \N$ and
\[k\ge \frac{{2nd(2d - 1)}}{{4\log 2}}\delta (p) - \frac{{n + 2d}}{2}\,,\]
one has ${\| x \|^{2k}_2}p \in \Sigma {[ x ]_{k + d}}$.
\end{lemma}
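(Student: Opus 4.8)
The statement is Reznick's uniform-denominator theorem; I outline how I would reconstruct its proof. The plan is to realize $\|x\|_2^{2k}p$ as an \emph{average of squares} of forms of degree $k+d$ over the sphere, and then to determine precisely for which $k$ the density of that average remains nonnegative. First I would restrict to $\mathbb{S}^{n-1}$: since $p$ is a positive definite form of degree $2d$, it satisfies $0<a:=\min_{\mathbb{S}^{n-1}}p\le\max_{\mathbb{S}^{n-1}}p=:b$ with $\delta(p)=b/a$, and by homogeneity $a\|x\|_2^{2d}\le p(x)\le b\|x\|_2^{2d}$ on $\R^n$. As $p$ is even of degree $2d$, its restriction to the sphere has a spherical-harmonic expansion supported on even degrees, $p|_{\mathbb{S}^{n-1}}=\sum_{j=0}^{d}Y_{2j}$; equivalently $p(x)=\sum_{j=0}^{d}\|x\|_2^{2(d-j)}\widehat{Y}_{2j}(x)$ as forms, where $\widehat{Y}_{2j}$ is harmonic of degree $2j$ and $\widehat{Y}_0\equiv 1$.

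Next I would put $m:=2(k+d)$ and let $\sigma$ denote the uniform probability measure on $\mathbb{S}^{n-1}$. The Funk--Hecke theorem supplies constants $\lambda_{m,2j}$, depending only on $n,m,j$, with $\int_{\mathbb{S}^{n-1}}Y_{2j}(\xi)\langle\xi,u\rangle^{m}\,d\sigma(\xi)=\lambda_{m,2j}\|u\|_2^{m-2j}\widehat{Y}_{2j}(u)$ for all $u\in\R^n$; the explicit Beta-function value of these integrals shows $\lambda_{m,2j}>0$ with $\lambda_{m,2j}/\lambda_{m,0}\uparrow 1$ as $m\to\infty$. Define $h_k:=\sum_{j=0}^{d}\lambda_{m,2j}^{-1}Y_{2j}$ on $\mathbb{S}^{n-1}$. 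By linearity $\int_{\mathbb{S}^{n-1}}h_k(\xi)\langle\xi,u\rangle^{m}\,d\sigma(\xi)=\sum_{j=0}^{d}\|u\|_2^{m-2j}\widehat{Y}_{2j}(u)=\|u\|_2^{2k}p(u)$. Consequently, if $h_k\ge 0$ on $\mathbb{S}^{n-1}$, then $\|x\|_2^{2k}p=\int_{\mathbb{S}^{n-1}}h_k(\xi)\bigl(\langle\xi,x\rangle^{k+d}\bigr)^{2}\,d\sigma(\xi)$ is an average of squares of forms of degree $k+d$; averaging the corresponding rank-one Gram matrices produces a positive semidefinite Gram matrix, hence $\|x\|_2^{2k}p\in\Sigma[x]_{k+d}$.

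It then remains to pin down the threshold on $k$ guaranteeing $h_k\ge 0$. Normalizing by $\lambda_{m,0}$ and using $\widehat{Y}_0\equiv 1$, I would write $\lambda_{m,0}h_k=p+\sum_{j=1}^{d}\varepsilon_j(m)Y_{2j}$ with $\varepsilon_j(m):=\lambda_{m,0}/\lambda_{m,2j}-1\ge 0$; since $p\ge a$ on $\mathbb{S}^{n-1}$ it is enough to have $\sum_{j=1}^{d}\varepsilon_j(m)\|Y_{2j}\|_\infty\le a$. Two estimates would then finish the job: (i) a bound on the spherical-harmonic components of $p$ by its oscillation, $\|Y_{2j}\|_\infty\le c_{n,j}(b-a)=c_{n,j}\,a\,(\delta(p)-1)$, obtained from the zonal-harmonic reproducing kernel together with an $L^1$-estimate for it (a plain dimension/$L^2$ bound is too wasteful); and (ii) the multiplicative identity $\lambda_{m,0}/\lambda_{m,2j}=\prod_{i=0}^{2j-1}\frac{m+n-1+i}{m-i}$, whose logarithm equals $\frac{2j(n+2j-2)}{m}\bigl(1+o(1)\bigr)$. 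Substituting (i)--(ii) into the inequality $\sum_{j=1}^{d}\varepsilon_j(m)\|Y_{2j}\|_\infty\le a$ and carrying out the book-keeping carefully with $m=2(k+d)$ — keeping the multiplicative form of the ratios rather than their first-order linearization — turns it into $k\ge\frac{2nd(2d-1)}{4\log 2}\delta(p)-\frac{n+2d}{2}$.

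I expect the quantitative step to be the main obstacle. The conceptual skeleton (the Funk--Hecke quadrature, and the remark that nonnegativity of its density is an SOS certificate of the correct degree $k+d$) is short; the real work lies in evaluating the Gegenbauer/Beta integrals exactly, in pushing the asymptotics of $\lambda_{m,0}/\lambda_{m,2j}$ far enough to isolate the constant $\tfrac{1}{4\log 2}$ and the exact shift $\tfrac{n+2d}{2}$ (a cruder argument only yields a bound with an unspecified constant), and in controlling $\|Y_{2j}\|_\infty$ by the oscillation $b-a$ with an $(n,j)$-dependent factor still compatible with those constants.
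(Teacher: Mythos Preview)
The paper does not give its own proof of this lemma: it is quoted as Reznick's Theorem~3.12 and used as a black box (most notably in the proof of Theorem~\ref{theo:representation.nonnegative.poly}). So there is nothing in the paper to compare your attempt against.

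That said, your outline is essentially Reznick's original argument. The integral representation $\|x\|_2^{2k}p(x)=\int_{\mathbb{S}^{n-1}}h_k(\xi)\langle\xi,x\rangle^{2(k+d)}\,d\sigma(\xi)$ via Funk--Hecke, followed by the observation that $h_k\ge 0$ yields an SOS certificate of the right degree, is exactly the skeleton of his proof (he phrases parts of it through the catalecticant/differential-operator pairing rather than spherical harmonics directly, but the content is the same). Your assessment of where the difficulty lies is also accurate: the quantitative step---controlling $\|Y_{2j}\|_\infty$ and the ratios $\lambda_{m,0}/\lambda_{m,2j}$ sharply enough to extract the constants $\tfrac{1}{4\log 2}$ and $\tfrac{n+2d}{2}$---is the bulk of Reznick's paper, and a cruder estimate would only give an unspecified constant. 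One small remark: Reznick actually proves more, namely that $\|x\|_2^{2k}p$ is a sum of $2(k+d)$-th \emph{powers of linear forms}; the SOS conclusion in $\Sigma[x]_{k+d}$ stated here is an immediate consequence.
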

\subsection{Globally nonnegative polynomials}
\label{sec:unconstrained}
Let us note $\|h\|_1:=\sum\nolimits_\alpha  {{|h_\alpha |}} $ for a given $h \in \R[x]$. The following result provides a representation of globally nonnegative polynomials. 
\begin{theorem}\label{theo:representation.nonnegative.poly}
Let $f\in \R[ x]_{2d}$ be nonnegative on $\R^n$. Then for every $\varepsilon>0$, for $k_\varepsilon \in \N$ and
\[k_\varepsilon\ge \frac{{2(n+1)d(2d - 1)}}{{4\log 2}}(\varepsilon^{-1}\|  f\|_{1}+1) - \frac{{n+1 + 2d}}{2}\,,\]
one has 
\begin{equation}\label{eq:certif.unconstrained}
{ \theta^{{k_\varepsilon }}}\left( {f + \varepsilon {{\theta}^{d}}} \right) \in \Sigma {[ x ]_{{k_\varepsilon } + d}}\,.
\end{equation}
\end{theorem}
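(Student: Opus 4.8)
\section*{Proof plan}

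The plan is to use the homogenize--Reznick--dehomogenize scheme sketched in the introduction. Fix $\varepsilon>0$ and set $g := f + \varepsilon\,\theta^d \in \R[x]_{2d}$. Let $G \in \R[x,x_{n+1}]$ be the degree-$2d$ homogenization of $g$, i.e.\ $G(x,x_{n+1}) := x_{n+1}^{2d}\,g(x/x_{n+1})$. Since the degree-$2$ homogenization of $\theta = 1+\|x\|_2^2$ is $x_{n+1}^2 + \|x\|_2^2 = \|(x,x_{n+1})\|_2^2$, so that the degree-$2d$ homogenization of $\theta^d$ is $\|(x,x_{n+1})\|_2^{2d}$, and since homogenization to a fixed degree is $\R$-linear, one has $G = \widetilde f + \varepsilon\,\|(x,x_{n+1})\|_2^{2d}$, where $\widetilde f$ denotes the degree-$2d$ homogenization of $f$, a homogeneous polynomial of degree $2d$ in $n+1$ variables.

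Next I would check that $G$ is a positive definite form of degree $2d$ in $(x,x_{n+1})$. If $x_{n+1}\neq 0$, then $G(x,x_{n+1}) = x_{n+1}^{2d}\bigl(f(x/x_{n+1}) + \varepsilon\,\theta(x/x_{n+1})^d\bigr) \ge \varepsilon\,x_{n+1}^{2d} > 0$, using $f\ge 0$ on $\R^n$ and $\theta\ge 1$. If $x_{n+1}=0$ and $x\neq 0$, then $G(x,0) = \widetilde f(x,0) + \varepsilon\,\|x\|_2^{2d}$, and $\widetilde f(x,0)$ is the top-degree homogeneous component of $f$, which equals $\lim_{t\to\infty}t^{-2d}f(tx)\ge 0$; hence $G(x,0)\ge \varepsilon\,\|x\|_2^{2d}>0$. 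The same case analysis shows $\widetilde f\ge 0$ on all of $\R^{n+1}$, and the added term forces $\deg G = 2d$.

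Then I would estimate $\delta(G)$. On the unit sphere $\mathbb S^{n}\subset\R^{n+1}$ one has $\|(x,x_{n+1})\|_2=1$, hence $G = \widetilde f + \varepsilon$ there; from $\widetilde f\ge 0$ we get $\inf_{\mathbb S^{n}}G\ge\varepsilon$, while writing $\widetilde f = \sum_{|\alpha|\le 2d} f_\alpha\,x^\alpha x_{n+1}^{2d-|\alpha|}$ and bounding each monomial by $1$ on $\mathbb S^n$ gives $\sup_{\mathbb S^{n}}|\widetilde f|\le \|f\|_1$, so $\sup_{\mathbb S^{n}}G\le \|f\|_1+\varepsilon$ and $\delta(G)\le \varepsilon^{-1}\|f\|_1+1$. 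Applying Reznick's Lemma~\ref{lem:homogeneous.sos} in $n+1$ variables to the positive definite form $G$ of degree $2d$: for $k_\varepsilon\in\N$ satisfying the bound in the statement --- which majorizes $\frac{2(n+1)d(2d-1)}{4\log 2}\,\delta(G) - \frac{(n+1)+2d}{2}$ by the previous estimate --- one obtains that $\|(x,x_{n+1})\|_2^{2k_\varepsilon}\,G$ is a sum of squares of forms of degree $k_\varepsilon+d$ in $(x,x_{n+1})$. Dehomogenizing by setting $x_{n+1}=1$ turns $\|(x,x_{n+1})\|_2^{2k_\varepsilon}$ into $\theta^{k_\varepsilon}$ and $G$ into $g = f+\varepsilon\theta^d$, and sends each square of a form of degree $k_\varepsilon+d$ to the square of a polynomial of degree at most $k_\varepsilon+d$; this yields $\theta^{k_\varepsilon}(f+\varepsilon\theta^d)\in\Sigma[x]_{k_\varepsilon+d}$, which is exactly~\eqref{eq:certif.unconstrained}.

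I expect the only genuinely delicate point to be the positive definiteness of $G$ on the hyperplane at infinity $\{x_{n+1}=0\}$: there the hypothesis $f\ge 0$ on $\R^n$ enters only asymptotically, through the leading form of $f$, and one must be careful that when $\deg f<2d$ the homogenization $\widetilde f$ carries extra factors of $x_{n+1}$ and may vanish on part of that hyperplane. The perturbation $\varepsilon\,\theta^d$ is added precisely to repair this: after homogenization it becomes $\varepsilon\,\|(x,x_{n+1})\|_2^{2d}$, which is strictly positive away from the origin and restores $\deg G = 2d$, so that Reznick's theorem applies, while keeping the dependence of $\delta(G)$, and hence of $k_\varepsilon$, explicit and linear in $\varepsilon^{-1}\|f\|_1$.
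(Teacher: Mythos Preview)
Your proof is correct and follows essentially the same route as the paper's own proof: homogenize $f+\varepsilon\theta^d$ to the positive definite form $\widetilde f + \varepsilon\|(x,x_{n+1})\|_2^{2d}$ in $n+1$ variables, bound $\delta$ on $\mathbb S^n$ by $\varepsilon^{-1}\|f\|_1+1$ using $\widetilde f\ge 0$ and $\|\widetilde f\|_1=\|f\|_1$, apply Reznick's Lemma~\ref{lem:homogeneous.sos}, and dehomogenize by setting $x_{n+1}=1$. Your case analysis for positive definiteness at $x_{n+1}=0$ is slightly more explicit than the paper's (which simply asserts that $\widetilde f\ge 0$ on $\R^{n+1}$ because $f\ge 0$ on $\R^n$), but the argument is otherwise identical.
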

The detailed proof of Theorem \ref{theo:representation.nonnegative.poly} postponed to Appendix~\ref{proof:representation.nonnegative.poly} consists of  three steps:
\begin{enumerate}
\item Associate a positive definite form to the globally nonnegative polynomial $f$.
\item Use Reznick's representation from Lemma \ref{lem:homogeneous.sos} to get a representation of this homogeneous form.
\item Transform back the homogeneous polynomial together with its representation to the original polynomial.
\end{enumerate}
\subsection{Polynomials nonnegative on a basic semialgebraic set}
\label{sec:constrained}
We recall the definition of the truncated quadratic module of order $d$ associated with $S(g)$:
\[Q_d(g): = \left\{ \sigma_0+\sum\limits_{j = 1}^m {{\sigma _j}{g_j}} :\;{\sigma _0} \in \Sigma {[ x]_{d}}\,,\, {\sigma _j} \in \Sigma {[ x]_{d - {u_i}}}\right\}\,,\]
where ${u_j}: = \lceil {\deg (g_j)/2} \rceil$, $j=1,\dots,m$.
For every $h \in \R[ x]$, let us define
\[d_1( h): =  {1 + \lfloor {{{\deg ( h )}}/{2}} \rfloor } \qquad \text{ and }\qquad d_2( h): =  {\lceil {{{\deg ( h )}}/{2}} \rceil } .\]
The following result provides a degree bound for the SOS multipliers of \cite[Theorem 1]{putinar1999positive}.
\begin{theorem}\label{theo:Representation.nonnegative2}
Let $g = \{ {{g_1},\dots,{g_m}} \} \subset \R[ x]$ and $f\in \R[ x]$ such that $f$ is nonnegative on $S( g )$. Let $\varepsilon  > 0$ and $d\in \N$ be such that  at least one of the following two conditions  is satisfied:

\noindent (i) $d\ge d_1(f)$;

\noindent (ii) $d\ge d_2(f)$ and $g_m:=f+\lambda$ for some real $\lambda \ge 0$.\\
Then there exist $k_\varepsilon\in \N$ such that
\begin{equation}\label{eq:bigger.quadratic2}
{\theta ^{{k_\varepsilon }}}\left( {f + \varepsilon\, {\theta ^{d}}}\right) \in  {Q_{{k_\varepsilon } + d}}( g )\,.
\end{equation}
\end{theorem}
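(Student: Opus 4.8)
The plan is to mimic the three-step homogenization strategy already used for Theorem~\ref{theo:representation.nonnegative.poly}, but now carrying the constraints $g_j$ along, and invoking Putinar's Positivstellensatz on the sphere in place of Reznick's theorem (or rather invoking a suitable homogeneous version of it). First I would reduce to the case where all data are homogeneous: replace $f$ by a perturbed polynomial $f_\varepsilon := f+\varepsilon\theta^{d}$ and consider the degree-$2d$ homogenization $\widetilde{f_\varepsilon}\in\R[x,x_{n+1}]$. The point of the perturbation is exactly the content of Corollary~\ref{coro:consequence.Purtina}(1)/Theorem~\ref{theo:Putinar.Vasilescu}: since $f\ge0$ on $\R^n$ and $2d\ge\deg f$ (guaranteed by either hypothesis (i) or (ii)), the homogenization $\widetilde{f_\varepsilon}$ is \emph{strictly} positive on $\R^{n+1}\setminus\{0\}$, hence positive on the unit sphere $\mathbb S^n\subset\R^{n+1}$. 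Likewise homogenize each $g_j$ to $\widetilde{g_j}$ of degree $2u_j$ (pad with a power of $\theta$'s homogenization $x_{n+1}^2+\|x\|_2^2$ so that all degrees are even and the dehomogenization at $x_{n+1}=1$ recovers $g_j$ up to a known factor).

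Second, I would apply Putinar's Positivstellensatz (Theorem~\ref{theo:Putinar's.Positivstellensatz2}(i)) on the \emph{compact} set $K:=\{(x,x_{n+1})\in\mathbb S^n : \widetilde{g_j}(x,x_{n+1})\ge0,\ j=1,\dots,m\}$, using the sphere equation $1-\|x\|_2^2-x_{n+1}^2=0$ to supply the Archimedean condition. This yields an identity $\widetilde{f_\varepsilon} = \widehat\sigma_0 + \sum_j \widehat\sigma_j\,\widetilde{g_j} + \widehat\phi\cdot(1-\|x\|_2^2-x_{n+1}^2)$ with $\widehat\sigma_j\in\Sigma[x,x_{n+1}]$. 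The standard homogenization-cleanup argument (as in the proof of \cite[Theorem 7]{jacobi2001representation}, and already used for Theorem~\ref{theo:representation.nonnegative.poly}) lets one assume the $\widehat\sigma_j$ are homogeneous of the appropriate degrees and absorb the sphere multiplier by repeatedly substituting $x_{n+1}^2+\|x\|_2^2$ for $1$ on $\mathbb S^n$; one then multiplies through by a suitable power $\left(x_{n+1}^2+\|x\|_2^2\right)^{N}$ to homogenize the whole identity. Setting $x_{n+1}=1$ (dehomogenizing) converts $x_{n+1}^2+\|x\|_2^2$ into $\theta$, converts $\widetilde{f_\varepsilon}$ back to $f_\varepsilon=f+\varepsilon\theta^d$ times a power of $\theta$, and converts $\widetilde{g_j}$ back to $g_j$ times a power of $\theta$ (which is itself a power of $\theta$ times an SOS, so it is reabsorbed into $\sigma_j$); the SOS $\widehat\sigma_j$ dehomogenize to SOS $\sigma_j\in\Sigma[x]$. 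Collecting powers of $\theta$ gives exactly $\theta^{k_\varepsilon}(f+\varepsilon\theta^d)=\sigma_0+\sum_j\sigma_j g_j$ for some $k_\varepsilon\in\N$, and a degree count of the homogeneous pieces pins down $\deg\sigma_j\le 2(k_\varepsilon+d-u_j)$, i.e.\ membership in $Q_{k_\varepsilon+d}(g)$.

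The role of the two alternative hypotheses (i) and (ii) is to guarantee in step one that $\widetilde{f_\varepsilon}$ is a positive form of degree exactly $2d$: under (i), $d\ge d_1(f)=1+\lfloor\deg(f)/2\rfloor$ forces $2d>\deg f$, so the top-degree part of $f_\varepsilon$ is $\varepsilon\|x\|_2^{2d}$-ish and $\widetilde{f_\varepsilon}$ has no zero on the sphere; under (ii), one only has $2d\ge\deg f$ but the extra constraint $g_m=f+\lambda$ compensates — one writes $f=g_m-\lambda$ and uses $g_m\ge0$ on $S(g)$ together with the perturbation, so that the needed strict positivity of the homogenized, perturbed object still holds on the relevant compact piece (effectively one gains one constraint's worth of slack, allowing the degree to drop from $d_1(f)$ to $d_2(f)$). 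I expect the main obstacle to be precisely this bookkeeping around step one and the degree accounting in step three: making sure the perturbation $\varepsilon\theta^d$ is \emph{exactly} what is needed for $\widetilde{f_\varepsilon}>0$ on $\mathbb S^n\setminus\{0\}$ under the weaker hypothesis (ii), and then tracking the powers of $x_{n+1}^2+\|x\|_2^2$ through the sphere-multiplier elimination carefully enough that the final $k_\varepsilon$ and the degree bound $k_\varepsilon+d-u_j$ come out clean rather than off by a constant. Everything else is a routine adaptation of the unconstrained argument of Theorem~\ref{theo:representation.nonnegative.poly} together with Theorem~\ref{theo:Putinar's.Positivstellensatz2}(i).
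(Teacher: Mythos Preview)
Your three-step strategy---homogenize, apply Putinar on a compact slice of the sphere, then dehomogenize and track degrees---is exactly the paper's approach. But your first step contains a genuine error, not merely a slip in wording. You write that $\widetilde{f_\varepsilon}$ is strictly positive on $\R^{n+1}\setminus\{0\}$ ``since $f\ge 0$ on $\R^n$''. The hypothesis, however, is only $f\ge 0$ on $S(g)$; in general $\widetilde{f_\varepsilon}$ is \emph{not} a positive form, because at points $(y,y_{n+1})$ with $y_{n+1}\ne 0$ and $y/y_{n+1}\notin S(g)$ it may well be negative. What you actually need, and what the paper proves, is the weaker statement that $\widetilde{f_\varepsilon}>0$ on the set $K=\mathbb S^n\cap\{\widehat{g_j}\ge 0:\,j=1,\dots,m\}$.

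That claim requires a genuine two-case argument. For $y_{n+1}\ne 0$, the constraints $\widehat{g_j}(y,y_{n+1})\ge 0$ force $y/y_{n+1}\in S(g)$, hence $f(y/y_{n+1})\ge 0$, hence $\tilde f(y,y_{n+1})\ge 0$. For $y_{n+1}=0$ the constraints give no information about $f$ directly, and this is precisely where (i) and (ii) are used. Under (i), $2d>\deg f$ forces $x_{n+1}\mid\tilde f$, so $\tilde f(y,0)=0$ and $\widetilde{f_\varepsilon}(y,0)=\varepsilon\|y\|_2^{2d}>0$; here your description is essentially right. Under (ii), however, $\widetilde{f_\varepsilon}$ is \emph{not} a positive form even after perturbation: when $d=d_2(f)$ and $\deg f$ is even, $\tilde f(y,0)$ is the leading form of $f$, which need not be globally nonnegative. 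The rescue is that on $K$ one also has $\widehat{g_m}(y,0)\ge 0$, and since $g_m=f+\lambda$ with $\lambda$ constant, $\widehat{g_m}(y,0)$ \emph{equals} that same leading form; thus $\tilde f(y,0)\ge 0$ on $K$. Your phrase ``uses $g_m\ge 0$ on $S(g)$'' misses this: the relevant inequality is $\widehat{g_m}\ge 0$ on $K$ at the boundary slice $y_{n+1}=0$, not anything on $S(g)$ itself.

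Once step one is corrected to positivity on $K$ rather than on $\R^{n+1}\setminus\{0\}$, the remainder of your sketch matches the paper: Putinar on $K$ with the sphere equation supplying Archimedeanity, elimination of the sphere multiplier (the paper does this by evaluating the Putinar identity at $L^{1/2}(z,z_{n+1})/\|(z,z_{n+1})\|_2$, clearing denominators by a power $\|(z,z_{n+1})\|_2^{K}$, and observing that the non-polynomial part must vanish), and dehomogenization at $x_{n+1}=1$ to recover $\theta$ and the degree bound $k_\varepsilon+d-u_j$.
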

The detailed proof of Theorem \ref{theo:Representation.nonnegative2} relies on Jacobi's technique in his proof of \cite[Theorem 7]{jacobi2001representation} and is postponed to Appendix~\ref{proof:Representation.nonnegative2}. This proof consists of three steps:
\begin{enumerate}
\item  Associate a homogeneous polynomial ${\tilde f}$ to the  polynomial $f$.
\item  Use Putinar's Positivstellensatz (Theorem \ref{theo:Putinar's.Positivstellensatz2} (i)) to obtain a representation of ${\tilde f}$.
\item  Transform back the representation of ${\tilde f}$ to obtain a representation of $f$.
\end{enumerate}
\begin{remark}
Theorem \ref{theo:Representation.nonnegative2} is an extension of  Putinar's Positivstellensatz to (possibly) non-compact sets $S(g)$, and so does not require the Archimedean condition. 
The price to pay for such an extension is the presence of the multiplier $\theta^{k_\varepsilon}$ in front of $f$ and the perturbation term $\varepsilon\,\theta^{d}$. Note that (ii) involves a tighter bound for $d$, compared to (i), since $d_1(f)\ge d_2(f)$. The counterpart is that (ii) requires to 
include the additional constraint $f+\lambda\ge 0$, for some $\lambda\ge 0$. 
\end{remark}
\paragraph{Complexity of Putinar-Vasilescu's Positivstellensatz.}

\if{
For every $h \in \R[ x ]$, we define by $\hat h$ the degree $2d_1 ( h )$ homogenization of $h$, i.e.,
\begin{equation}
\label{eq:homogenous.formula2}
\hat h( {x,{x_{n + 1}}} ) = x_{n + 1}^{2d_1 ( h )}h( {\displaystyle{x}/{{{x_{n + 1}}}}} )\,.
\end{equation}
where ${d_1}( h ): = {1 + \lfloor {{{\deg ( h )}}/{2}} \rfloor }$ 
and $\| \hat h \|_{\max,l}: = \mathop {\max }\limits_\alpha  \left\{ {{| {{h_\alpha }} |} / c_{n+1}(\alpha,2 d_1(h)-|\alpha|)} \right\}$, where $c_n(\alpha) = \frac{|\alpha|!}{\alpha_1!\dots\alpha_n!}$.
}\fi
For each $h\in\R[x]_{2l}$, we denote $\| h \|_{\max,l}: = \mathop {\max }\limits_\alpha  \left\{ {{| {{h_\alpha }} |} / c_{n+1}(\alpha,2l-|\alpha|)} \right\}$. Let us recall $c_n(\alpha) := \frac{|\alpha|!}{\alpha_1!\dots\alpha_n!}$ for each $\alpha \in \N^n$. 
Relying on Theorem \ref{theo:Complexity.Putinar}, one can analyze the complexity of Putinar-Vasilescu's Positivstellensatz as follows:
\begin{proposition}\label{prop:Complexity.Putinar-Vasilescu}
Assume that all assumptions of Theorem \ref{theo:Representation.nonnegative2} hold and $0_{\R^n}\in S(g)$. Then there exists $C>0$ depending on $g$ such that for all $k_\varepsilon\in \N$ satisfying
\[k_\varepsilon\ge C\exp \left( \left( 4^{d+1}d^2(n + 1)^{2d}\left(\varepsilon^{-1} \| f  \|_{\max, d}+ \max\limits_{\bar \alpha \in \N^{n+1}_d} \displaystyle \frac{c_{n+1}(\bar \alpha)}{c_{n+1}(2\bar \alpha)} \right) \right)^C \right)-d\,,\]
one has ${\theta ^{{k_\varepsilon }}}\left( {f + \varepsilon\, {\theta ^{d}}}\right) \in  {Q_{{k_\varepsilon } + d}}( g )$.
\end{proposition}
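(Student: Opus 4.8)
The plan is to follow exactly the three-step homogenization scheme announced for Theorem \ref{theo:Representation.nonnegative2}, but to replace the bare existence statement ``there exists $k_\varepsilon$'' by the quantitative bound of Theorem \ref{theo:Complexity.Putinar} (Nie--Schweighofer) at the compactified level, and then to track how the parameters transform under homogenization and dehomogenization. First I would recall the reduction used in the proof of Theorem \ref{theo:Representation.nonnegative2}: the perturbed polynomial $f+\varepsilon\theta^{d}$, after homogenization in the variable $x_{n+1}$, becomes a form $F$ of degree $2d$ in $n+1$ variables that is \emph{positive} on the relevant compact semialgebraic set — namely on $S(\tilde g)\cap \mathbb S^{n}$, where $\tilde g$ denotes the homogenized constraints together with $x_{n+1}^2\ge 0$ and the sphere equation $\sum_{i=1}^{n+1}x_i^2=1$. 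The hypothesis $0_{\R^n}\in S(g)$ is what guarantees the sphere constraint is compatible with $S(\tilde g)$, i.e. the compactified set is nonempty; this is the role of that assumption. The positivity of $F$ on this set comes from the $\varepsilon\theta^d$ perturbation exactly as in Theorem \ref{theo:Representation.nonnegative2}.

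Next I would apply Theorem \ref{theo:Complexity.Putinar} to the form $F$ on this compact Archimedean set (the sphere equation makes it Archimedean, and one may rescale so that it sits inside $(-1,1)^{n+1}$, absorbing the rescaling constant into $C$). This yields: for $N$ at least $C'\exp\big((\deg(F)^2 (n+1)^{\deg(F)} (F^\star)^{-1}\|F\|_{\max})^{C'}\big)$ one has $F\in Q_N(\tilde g)$. The key bookkeeping is then to replace each factor by something expressed in terms of the original data:
\begin{itemize}
\item $\deg(F)=2d$, so $\deg(F)^2=4d^2$ and $(n+1)^{\deg(F)}=(n+1)^{2d}$, which is where the $4^{d+1}d^2(n+1)^{2d}$ shape comes from after bounding $\deg(F)^2\le 4^{d}d^2$ crudely or keeping it and noting $4d^2 \le 4^{d+1}d^2$;
\item $\|F\|_{\max}$ is exactly the quantity denoted $\|\,\cdot\,\|_{\max,d}$ for a degree-$2d$ form in $n+1$ variables; since $F = \widetilde{f} + \varepsilon\,\widetilde{\theta^d}$ (up to the homogenization conventions), the coefficientwise triangle inequality gives $\|F\|_{\max,d}\le \|\widetilde f\|_{\max,d}+\varepsilon\|\widetilde{\theta^d}\|_{\max,d}$, and one checks $\|\widetilde{\theta^d}\|_{\max,d}\le \max_{\bar\alpha\in\N^{n+1}_d} c_{n+1}(\bar\alpha)/c_{n+1}(2\bar\alpha)$ by expanding $\theta^d=(1+\|x\|^2)^d$ via the multinomial theorem and dividing coefficients by the multinomial weights $c_{n+1}$; this explains the bracketed term $\varepsilon^{-1}\|f\|_{\max,d}+\max_{\bar\alpha}c_{n+1}(\bar\alpha)/c_{n+1}(2\bar\alpha)$ once one factors out $\varepsilon$;
\item $F^\star>0$: here one needs a lower bound on $\min F$ on the compact set, and the natural one (as in the proof of Theorem \ref{theo:Representation.nonnegative2}) is $F^\star\ge \varepsilon\cdot(\text{something like }2^{-d}$ or the minimum of the homogenized $\theta^d$ on the sphere$)$, giving $(F^\star)^{-1}\lesssim \varepsilon^{-1}$ up to a dimension-and-$d$ constant absorbed into $C$.
\end{itemize}
Combining these, $N\ge C\exp\big((4^{d+1}d^2(n+1)^{2d}(\varepsilon^{-1}\|f\|_{\max,d}+\max_{\bar\alpha}c_{n+1}(\bar\alpha)/c_{n+1}(2\bar\alpha)))^C\big)$ suffices for $F\in Q_N(\tilde g)$.

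Finally I would dehomogenize: setting $x_{n+1}^2 = \theta(x)^{-1}\cdot(\text{sphere normalization})$, more precisely substituting back $x_i \leftarrow x_i/\sqrt{\theta}$ and clearing denominators, the representation $F\in Q_N(\tilde g)$ turns into $\theta^{k_\varepsilon}(f+\varepsilon\theta^d)\in Q_{k_\varepsilon+d}(g)$ with $k_\varepsilon$ related to $N$ by $N = k_\varepsilon + d$ (the sphere multiplier on the compact side becomes the power of $\theta$, and the degree of the SOS multipliers drops accordingly, exactly as in Theorem \ref{theo:Representation.nonnegative2}); this accounts for the ``$-d$'' shift in the stated bound. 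The degree bookkeeping here is identical to the one already carried out in Appendix~\ref{proof:Representation.nonnegative2}, so I would cite that computation rather than redo it.

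The main obstacle I anticipate is \emph{not} any single step but the careful tracking of constants through homogenization so that the final bound genuinely has the clean closed form stated — in particular, making sure that (a) the Archimedean rescaling into $(-1,1)^{n+1}$ and the sphere-equation handling are compatible with the form of Theorem \ref{theo:Complexity.Putinar} (which is stated for $S(g,h)\subset(-1,1)^n$ with both inequalities \emph{and} equalities allowed), and (b) the lower bound $F^\star \gtrsim \varepsilon$ is honest, i.e. that the non-perturbed part $\widetilde f$ cannot drag $F$ down below $\tfrac12\varepsilon\min_{\mathbb S^n}\widetilde{\theta^d}$ on the compact set — this uses $f\ge 0$ on $S(g)$ together with the fact that the homogenization of a nonnegative polynomial stays nonnegative on the relevant cone, which is precisely where conditions (i)/(ii) of Theorem \ref{theo:Representation.nonnegative2} enter. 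Everything else is a routine substitution of $\deg(F)=2d$ and $n\mapsto n+1$ into the Nie--Schweighofer estimate.
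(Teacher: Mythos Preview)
Your approach is essentially identical to the paper's: homogenize, apply Nie--Schweighofer on the compactified set, then dehomogenize and read off $k_\varepsilon=K/2-d$. One bookkeeping point needs correction, however. You attribute the factor $4^{d+1}$ to a crude bound on $\deg(F)^2$ and propose to absorb the ``dimension-and-$d$ constant'' coming from $(F^\star)^{-1}$ into $C$. But the proposition allows $C$ to depend only on $g$, whereas $d$ is determined by $f$; so that absorption is not legitimate, and as written your argument does not actually deliver the stated form of the bound.

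The paper handles this by choosing the radius explicitly: it works on the sphere $\|(x,x_{n+1})\|_2^2=L$ with $L=1/4$, which forces the compactified set inside $(-1,1)^{n+1}$ (as required by Theorem~\ref{theo:Complexity.Putinar}) and, via inequality~\eqref{eq:positive.semialgebraic.homogeneous2}, gives the clean lower bound $F^\star\ge \varepsilon L^{d}=4^{-d}\varepsilon$. Then $(F^\star)^{-1}\le 4^{d}\varepsilon^{-1}$, and together with $\deg(F)^2=(2d)^2=4d^2$ this produces exactly $4\cdot 4^{d}\,d^2=4^{d+1}d^2$ inside the exponential, with no residual $d$-dependence hidden in $C$. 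Once you make this choice of $L$ explicit, the rest of your plan---the triangle-inequality bound $\|F\|_{\max}\le \|f\|_{\max,d}+\varepsilon\max_{\bar\alpha}c_{n+1}(\bar\alpha)/c_{n+1}(2\bar\alpha)$, the use of $0_{\R^n}\in S(g)$ for nonemptiness, and the dehomogenization producing the $-d$ shift---goes through exactly as in the paper.
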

The proof of Proposition~\ref{prop:Complexity.Putinar-Vasilescu} is postponed to Appendix~\ref{proof:Complexity.Putinar-Vasilescu}.
\paragraph{Discussion about the $\varepsilon$ parameter.}
The (arbitrary small) positive parameter $\varepsilon$ in Theorem \ref{theo:representation.nonnegative.poly} and Theorem \ref{theo:Representation.nonnegative2} ensures the positivity of polynomials over the respective considered domain $\R^n$ or $S(g)$,   excluding the origin in the homogenized  representations. However these representations can still  hold, even when $\varepsilon=0$, as illustrated in the following two examples: 
\begin{example}\label{exam:zero.parameter.epsilon}
\noindent (i) Motzkin's polynomial $f = x_1^4x_2^2 + x_1^2x_2^4 + 1 - 3x_1^2x_2^2$ is globally nonnegative but not SOS. However, $\theta f$ is SOS since
\[\begin{array}{rl}
\theta f =& 2{( {\frac{1}{2}x_1^3{x_2} + \frac{1}{2}{x_1}x_2^3 - {x_1}{x_2}} )^2} + {( {x_1^2{x_2} - {x_2}} )^2} + {( {{x_1}x_2^2 - {x_1}} )^2}\\
 &+ \frac{1}{2}{( {x_1^3{x_2} - {x_1}{x_2}} )^2} + \frac{1}{2}{( {{x_1}x_2^3 - {x_1}{x_2}} )^2} + {( {x_1^2x_2^2 - 1} )^2}\,.
\end{array}\]
\noindent (ii) Let $f = \left( {x_1^2 + x_2^2} \right)x_1^2x_2^2 - 3x_1^2x_2^2$ and  $g = x_1^2 + x_2^2 - 4$. 
It is not hard to show that $f$ is nonnegative on the non-compact set $S( g)$. 
Moreover, $f = \frac{1}{4}x_1^2x_2^2\left( {x_1^2 + x_2^2} \right) + \frac{3}{4}x_1^2x_2^2g$. Thus, ${\theta ^0}f  \in {Q_3}\left( g \right)$.
\end{example}
However,  the certificate \eqref{eq:certif.unconstrained} for global nonnegativity with $\varepsilon=0$ is not true in general, as shown in the following lemma:
\begin{lemma}\label{lem:positi.eps.zero}
The nonnegative dehomogenized Delzell's polynomial \[f=x_1^4x_2^2+x_2^4x_3^2+x_1^2x_3^4-3x_1^2x_2^2x_3^2+x_3^8\]
satisfies that ${\theta ^k}f \notin \Sigma[x]$ for all $k\in \N$.
\end{lemma}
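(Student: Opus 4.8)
The plan is to argue by contradiction: suppose $\theta^k f \in \Sigma[x]$ for some $k \in \N$, and derive a contradiction by homogenizing and passing to a limit. First I would record the relevant structural facts about $f = x_1^4x_2^2 + x_2^4x_3^2 + x_1^2x_3^4 - 3x_1^2x_2^2x_3^2 + x_3^8$: it is a nonnegative polynomial in three variables, its Newton polytope and leading behaviour are those of a form (the first four monomials constitute the Motzkin-type form $M = x_1^4x_2^2 + x_2^4x_3^2 + x_1^2x_3^4 - 3x_1^2x_2^2x_3^2$ of degree $6$, which is a positive-semidefinite but non-SOS ternary form with zeros exactly along the lines where $|x_1|=|x_2|=|x_3|$), and the extra term $x_3^8$ raises the degree to $8$. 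The point of $x_3^8$ (this is Delzell's construction) is that it is \emph{not} the top-degree part, so the dehomogenization argument behind Reznick's theorem cannot apply, yet $f$ stays genuinely degree-$8$ with an isolated ``high-degree'' monomial in $x_3$ alone.

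The core step is to relate $\theta^k f$ to a homogeneous form and invoke a known obstruction. Concretely, I would homogenize: set $N = \deg(\theta^k f) = 2k+8$ and let $F(x_0,x_1,x_2,x_3)$ be the degree-$N$ homogenization of $\theta^k f$, noting $\theta = 1 + \|x\|_2^2$ homogenizes to $x_0^2 + x_1^2+x_2^2+x_3^2$. If $\theta^k f$ were SOS of polynomials of degree $\le k+4$, then $F$ would be SOS of forms of degree $\le k+4$ in the four variables $x_0,\dots,x_3$ — here one must be slightly careful, since homogenization of a sum of squares need not immediately be a sum of squares of \emph{forms} unless each summand has the right degree; the standard fix is to observe that $\theta^k f$ has even degree $2(k+4)$ and its degree-$2(k+4)$ part is $(\|x\|_2^2)^k \cdot (\text{top part of } f)$, but the top part of $f$ here is just $x_3^8$ (wait — $x_3^8$ has degree $8$, and the Motzkin part has degree $6$), so the genuine top-degree homogeneous component of $\theta^k f$ is $(\sum x_i^2)^k x_3^8$, and one tracks how an SOS decomposition interacts with this. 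The cleanest route is: restrict/specialize to extract the form $M$. Setting $x_0 \to 0$ in $F$ kills the ``$1$'' in $\theta$ and, after also discarding the lower-degree-in-the-affine-sense pieces, one should be able to isolate a multiple $(\|x\|_2^2)^{k'} M$ of the Motzkin form $M$ and conclude it is SOS — contradicting that no power $(\|x\|_2^2)^j M$ of the Motzkin form is SOS.

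So the decisive input I would use is the classical fact (due essentially to the analysis around Motzkin's and Robinson's forms, and made precise by Reznick and by Delzell) that \emph{no} power $(x_1^2+x_2^2+x_3^2)^j$ times the Motzkin form $M$ is a sum of squares — equivalently, $M \notin \Sigma[x]$ and this persists under multiplication by $\theta^j$ or $\|x\|_2^{2j}$. Given such a statement, the contradiction closes. The main obstacle is precisely the bookkeeping in the specialization/homogenization step: making rigorous that an SOS certificate for $\theta^k f$ forces an SOS certificate for some $(\|x\|_2^2)^{j} M$ (or for some power of $\theta$ times a Motzkin-type form in a restricted set of variables), because $f$ is \emph{not} homogeneous and the ``spurious'' monomial $x_3^8$ has higher degree than the Motzkin part — this inhomogeneity is exactly what Delzell exploited, so one has to set up the degree truncation carefully rather than just homogenizing naively. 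A convenient concrete device is to substitute $x_3 \mapsto t$, keep $t$ small, compare leading terms in $t$, and extract the Motzkin form $M(x_1,x_2)$ — no, $M$ is ternary — so instead substitute along a curve approaching the bad locus of $M$ and show the putative SOS certificate would have to vanish to too high an order, contradicting non-SOS-ness of $M$. Either way, the quantitative heart is an appeal to the known non-representability of the Motzkin form and its $\theta$-multiples, and the rest is degree accounting.
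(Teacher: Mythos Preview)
Your proposal has a genuine gap, and it stems from aiming at the wrong obstruction. You try to isolate the degree-$6$ cyclic Motzkin-type form $M = x_1^4x_2^2+x_2^4x_3^2+x_1^2x_3^4-3x_1^2x_2^2x_3^2$ and invoke a ``classical fact'' that $(\|x\|_2^2)^j M$ is never SOS. But the specialization you suggest does the opposite of what you want: the degree-$8$ homogenization of $f$ (with new variable $x_0$) is $x_0^2 M + x_3^8$, so the full homogenization of $\theta^k f$ is $F=(x_0^2+\|x\|_2^2)^k(x_0^2 M + x_3^8)$, and setting $x_0=0$ yields $(\|x\|_2^2)^k x_3^8$, which is trivially SOS. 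The $M$-part is killed, not isolated. Your fallback ideas (curves approaching the bad locus, degree truncation) are left vague precisely because there is no clean way to peel off $M$ here; the inhomogeneity is the whole point.

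Worse, the ``decisive input'' you rely on is not true in the generality you need: for PSD ternary sextics that are not SOS it is \emph{not} generally the case that every $(\|x\|_2^2)^j$-multiple remains non-SOS (indeed Example~\ref{exam:zero.parameter.epsilon}(i) in this very paper shows $\theta$ times the classical Motzkin polynomial \emph{is} SOS, hence $(\|x\|_2^2)\cdot M_0$ is SOS for the homogeneous Motzkin form $M_0$). The obstruction that actually works is about the \emph{quaternary} Delzell form $\tilde f(x_1,x_2,x_3,x_4)=x_4^2 M + x_3^8$, not about $M$. The paper's proof runs in one line: if $\theta^K f\in\Sigma[x]$ then, since homogenization preserves SOS, $\|(x,x_4)\|_2^{2K}\tilde f\in\Sigma[x,x_4]$; Reznick \cite[\S 6]{reznick2000some} shows this is impossible for Delzell's form. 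The extra monomial $x_3^8$ is not a nuisance to be discarded --- it is exactly what makes $\tilde f$ a form for which no power of $\|\cdot\|_2^2$ is a valid denominator.
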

\begin{proof}
Assume by contradiction that ${\theta ^K}f \in \Sigma[x]$ for some $K\in \N$. Note that $n=3$ here. We denote by $\tilde f$ the degree $8$ homogenization of $f$, i.e., 
\[\tilde f=x_4^2(x_1^4x_2^2+x_2^4x_3^2+x_1^2x_3^4-3x_1^2x_2^2x_3^2)+x_3^8\,.\]
Then ${\|(x,x_{n+1})\|_2^{2K}}\tilde f \in \Sigma[x,x_{n+1}]$. As shown in \cite[\S 6]{reznick2000some}, it is impossible. This contradiction yields the conclusion.
\end{proof}
The certificate \eqref{eq:bigger.quadratic2} for global nonnegativity on basic semialgebraic sets  with $\varepsilon=0$ is also not true in general, as shown in the following lemma:
\begin{lemma}\label{lemma:not.first.order}
With $n=1$, let $f=x$ and $g = \{ {{x^3}, - {x^3}} \}$. Then $f = 0$ on $S( g ) = \{ 0 \}$. It follows that $f$ is nonnegative on $S( g )$, but:

\noindent (i) ${\theta ^k}f \notin Q( g )$ for all $k\in \N$ and;

\noindent (ii) for every $\varepsilon>0$, ${\theta ^k}(f + \varepsilon \theta ) \in Q_{k+1}( g )$ for all $k\in\N$ with $k \ge \max\{2\,,\,{\varepsilon ^{ - 2}}/4 - 1\}$.
\end{lemma}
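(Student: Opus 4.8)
The plan is to treat the two parts separately, relying only on two elementary facts about univariate polynomials: a polynomial in $\R[x]$ is SOS if and only if it is nonnegative on $\R$ (and a nonnegative univariate polynomial of degree $2m$ lies in $\Sigma[x]_m$), and an SOS polynomial vanishing at a point is divisible by the square of the corresponding linear form.

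For part (i) I would argue by contradiction. Since $g_1=x^3$ and $g_2=-x^3$, every element of $Q(g)$ has the form $\sigma_0+(\sigma_1-\sigma_2)x^3$ with $\sigma_0,\sigma_1,\sigma_2\in\Sigma[x]$. Assuming $\theta^kf=x(1+x^2)^k=\sigma_0+(\sigma_1-\sigma_2)x^3$, I rearrange this to $\sigma_0=x\bigl[(1+x^2)^k-(\sigma_1-\sigma_2)x^2\bigr]$, so $\sigma_0(0)=0$; since $\sigma_0$ is SOS this forces $x^2\mid\sigma_0$. Writing $\sigma_0=x^2\tau$ with $\tau\in\R[x]$ and cancelling one factor $x$ in the integral domain $\R[x]$, the identity becomes $(1+x^2)^k=x\tau+(\sigma_1-\sigma_2)x^2$; evaluating at $x=0$ gives $1=0$, a contradiction. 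Hence $\theta^kf\notin Q(g)$ for every $k\in\N$.

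For part (ii) I would proceed constructively. Fix $\varepsilon>0$ and $k\ge\max\{2,\varepsilon^{-2}/4-1\}$ and write
\[
\theta^k(f+\varepsilon\theta)=x(1+x^2)^k+\varepsilon(1+x^2)^{k+1}.
\]
The binomial theorem gives $x(1+x^2)^k=x+x^3q(x)$ with $q(x):=\sum_{j=1}^{k}{k\choose j}x^{2(j-1)}=\sum_{i=0}^{k-1}{k\choose i+1}(x^i)^2$, a sum of squares of degree $2(k-1)$, i.e. $q\in\Sigma[x]_{k-1}$. Since $u_1=u_2=\lceil 3/2\rceil=2$, setting $\sigma_1:=q$, $\sigma_2:=0$ and $\sigma_0:=x+\varepsilon(1+x^2)^{k+1}$ yields $\theta^k(f+\varepsilon\theta)=\sigma_0+\sigma_1g_1+\sigma_2g_2$, so it only remains to check $\sigma_0\in\Sigma[x]_{k+1}$. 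As $\deg\sigma_0=2k+2$, it suffices that $\sigma_0\ge 0$ on $\R$; using $(1+x^2)^{k+1}\ge 1+(k+1)x^2$ (dropping nonnegative binomial terms) one gets $\sigma_0(x)\ge\varepsilon(k+1)x^2+x+\varepsilon$, a quadratic in $x$ with positive leading coefficient whose discriminant $1-4\varepsilon^2(k+1)$ is $\le 0$ exactly when $k\ge\varepsilon^{-2}/4-1$. This holds by hypothesis, so $\sigma_0\in\Sigma[x]_{k+1}$ and therefore $\theta^k(f+\varepsilon\theta)\in Q_{k+1}(g)$.

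The computations are routine. The only genuinely creative step is the splitting $x(1+x^2)^k=x+x^3q(x)$ with $q$ an explicit sum of squares: it isolates the single obstructive monomial $x$, and then the cushion $\varepsilon\theta^{k+1}$ turns the remainder $x+\varepsilon\theta^{k+1}$ into a globally nonnegative (hence, being univariate, SOS) polynomial precisely under the stated lower bound on $k$. I do not anticipate any real difficulty beyond keeping the degree bookkeeping consistent with the definition of $Q_{k+1}(g)$.
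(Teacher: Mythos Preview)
Your argument is correct. For part~(i) it is essentially the paper's proof: both observe that the SOS piece must vanish at $0$, hence be divisible by $x^2$, and then a cancellation of one factor of $x$ produces the contradiction $1=0$; the paper phrases this via the low-order Taylor coefficients of each square rather than the divisibility statement, but the content is identical.

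For part~(ii) you use a genuinely different decomposition. The paper truncates $\theta^k(f+\varepsilon\theta)$ at degree~$2$, completes the square on $\varepsilon+x+\varepsilon(k+1)x^2$ (this is where the bound $k\ge\varepsilon^{-2}/4-1$ enters), and pushes every higher-order term into a single remainder $x^3q(x)$ with $q\in\R[x]$. You instead keep the whole perturbation $\varepsilon\theta^{k+1}$ inside $\sigma_0=x+\varepsilon(1+x^2)^{k+1}$ and extract the $g_1$-multiplier solely from the identity $x(1+x^2)^k=x+x^3q(x)$. The payoff is that your $q=\sum_{j=1}^{k}\binom{k}{j}(x^{j-1})^2$ is \emph{visibly} a sum of monomial squares in $\Sigma[x]_{k-1}$, so the choice $\sigma_1=q$, $\sigma_2=0$ lands in $Q_{k+1}(g)$ with the degree bounds checked on the spot; in the paper's split the remainder $q$ is only a polynomial (of degree $2k-1$, in fact) whose realisation as $\sigma_1-\sigma_2$ with $\sigma_1,\sigma_2\in\Sigma[x]_{k-1}$ is left unverified. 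Both routes ultimately reduce to the same discriminant inequality $1-4\varepsilon^2(k+1)\le 0$, but your version is cleaner on the truncation bookkeeping.
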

\begin{proof}
We will show statement (i). 
Assume by contradiction that there exists $k\in\N$ such that ${\theta ^k}f \in Q( g )$. 
Then there exists ${q_j}( x ) \in \R[ x ]$, $j=0,\dots,r$ such that 
\[{\theta ^k}f = \sum\limits_{j = 1}^m {{q_j}{{( x )}^2}}  + {q_0}( x ){x^3}.\]
Assume that ${q_j}( x ) = {a_j} + {b_j}x + {x^2}{d_j}( x )$, where $a_j,b_j\in\R$ and $d_j\in \R[x]$, $j=1,\dots,r$. 
From this and since ${\theta ^k} = 1 + {x^2}e(x)$ for some $e\in\R[x]$, one has
\[(1 + {x^2}e(x))x = \sum\limits_{j = 1}^r {a_j^2}  + 2\sum\limits_{j = 1}^r {{a_j}{b_j}x}  + {x^2}p(x)\,,\]
for some $p\in\R[x]$. 
By comparing coefficients of monomials $1$ and $x$ in the two sides of the above equality,
$\sum\limits_{j = 1}^r {a_j^2}  = 0$ and $2\sum\limits_{j = 1}^r {{a_j}{b_j}}  = 1$. 
It implies that $a_j=0$, $j=1,\dots,r$, and $2\sum\limits_{j = 1}^r {{a_j}{b_j}}  = 1$. 
It follows that $0=1$. 
It is impossible. 

Let us prove the statement (ii). Let $\varepsilon>0$ and $k\in \N$, $k\ge 2$. Since ${\theta ^k} = 1 + kx^2 + {x^4}e(x)$ for some $e\in\R[x]_{2k-4}$, one has
\[{\theta ^k}(f + \varepsilon \theta ) = (1 + k{x^2} + {x^4}e(x))(\varepsilon  + x + \varepsilon {x^2})= \varepsilon  + x + \varepsilon ( {k + 1} ){x^2} + {x^3}q( x )\,,\]
 for some $q\in\R[x]_{2k-2}$. 
Assume that $k \ge {\varepsilon ^{ - 2}}/4 - 1$. 
Then
\[{\theta ^k}(f + \varepsilon \theta ) = \varepsilon  - \frac{1}{{4\varepsilon (k + 1)}} + {\left( {x\sqrt {\varepsilon (k + 1)}  + \frac{1}{{2\sqrt {\varepsilon (k + 1)} }}} \right)^2} + {x^3}q( x )\in Q_{k+1}( g )\,.\]
\end{proof}
%

From Lemma \ref{lem:positi.eps.zero} and Lemma \ref{lemma:not.first.order}, we conclude that the strict positivity of the $\varepsilon$ parameter is necessary in general although the certification with $\varepsilon=0$ may happen in many cases.

When the certificate \eqref{eq:certif.unconstrained} with $\varepsilon=0$ occurs, one has the following remark about the exponent of $\theta$ in \eqref{eq:certif.unconstrained}.
\begin{remark}
If $n=2$, there does not exist a fixed $K\in \N$ such that for all  nonnegative $f\in\R[x]_6$ , $\theta^K f$ is SOS. 
Indeed, assume by contradiction that there exists such a $K$. Then, the degree $6$ homogenization $\tilde f$ of $f$ would be a positive ternary sextic such that $\|(x,x_{n+1})\|^{2K}_2 \tilde f$ is SOS. 
By using \cite[Theorem 1]{reznick2005absence} and the fact that the homogeneous Mozkin's polynomial is a  positive ternary sextic which is not SOS, we obtain a contradiction. 
\end{remark}
In certificate \eqref{eq:certif.unconstrained} with $\varepsilon=0$, the multiplier $\theta^{k_{\varepsilon}}$ can be replaced with other kinds of SOS in some certain examples, e.g., Delzell's, Leep-Starr's  from \cite{kaltofen2012exact, schabert2019uniform}.
\section{Polynomial optimization}
\label{sec:pop}
In this section, we exploit the two representations from Theorem \ref{theo:representation.nonnegative.poly} and Theorem \ref{theo:Representation.nonnegative2} to construct new hierarchies of semidefinite programs for POPs of the form ${f^\star} = \inf\{ {f( x ):\ x \in \Omega} \}$ where $\Omega=\R^n$ for the unconstrained case and $\Omega=S\left(g\right)$ for the constrained case (with no compactness assumption), respectively.  
Instead of solving the original problem, we are rather interested in the perturbed problem:
\begin{equation}
    \label{eq:perturbed}
f_\varepsilon ^\star := \inf\{ {f( x ) + \varepsilon\,\theta {{( x )}^d}:\ x \in \Omega} \}\,,
\end{equation}
where $\varepsilon>0$ is fixed, $\theta(x) := 1 + {\| x \|_2^2}$, and $2d \ge \deg \left( f \right)$. 
Now, assume that the optimal value $f^\star$ of the original problem is attained at some $x^\star\in \Omega$.
It is not difficult to show that if $\Omega$ is unbounded, the polynomial $f + \varepsilon \,{\theta ^d}$ is coercive on $\Omega$, i.e.,
\[\mathop {\lim }\limits_{x\in \Omega\,,\,\| x \|_2 \to \infty } \left( {f( x ) + \varepsilon \theta {{( x )}^d}} \right) = \infty \,,\]
(see more in \cite{bajbar2015coercive}). 
Indeed, it is due to the fact that $f$ is bounded from below by $f^\star$ on $\Omega$ and $\theta {{( x )}^d}\to \infty$ as $\| x \|_2 \to \infty$. 
Thus, the optimal value $f_\varepsilon ^\star$ of the perturbed problem \eqref{eq:perturbed} is always attained at some global minimizer $x^\star_\varepsilon$ even if $\Omega$ is non-compact.  
Then:
\[\begin{array}{rl}
{f^\star} + \varepsilon \,\theta {( {{x^\star}} )^d} &= f( {{x^\star}} ) + \varepsilon \,\theta {( {{x^\star}})^d}\\
 &\ge f_\varepsilon ^\star = f( {x_\varepsilon ^\star} ) + \varepsilon \,\theta {( {x_\varepsilon ^\star} )^d} \ge f( {x_\varepsilon ^\star} ) \ge {f^\star}.
\end{array}\]
Thus, ${f^\star} \in [ {f_\varepsilon ^\star - \varepsilon\, \theta {{( {{x^\star}} )}^d},f_\varepsilon ^\star} ]$, i.e., $f^\star_\varepsilon$ is a perturbation of $f^\star$ and the gap between both of them is at most $\varepsilon\, \theta ( x^\star)^d$. 
Next, observe that:
\[\begin{array}{rl}
f_\varepsilon ^\star &= \sup \{ {\lambda  \in \R:\,f + \varepsilon\, {\theta ^d} - \lambda  \ge 0{\text{ on }}\Omega} \}\\
& = \sup \{ {\lambda  \in \R:\,{\theta ^k}( {f + \varepsilon\, {\theta ^d} - \lambda } ) \ge 0{\text{ on }}\Omega} \},\ k\in \N\,.
\end{array}\]
The following hierarchies are based on the simple idea of replacing constraint ``$\theta ^k( f + \varepsilon {\theta ^d} - \lambda  ) \ge 0{\text{ on }}\Omega$" by relaxed constraint``${\theta ^k}( {f + \varepsilon {\theta ^d} - \lambda } )$ is in the truncated quadratic module associated with  $\Omega$".
\subsection{Unconstrained case}
\label{sec:unconstrained_hierarchy}
Given $f \in \R[ x]_{2 d}$, let us consider the following problem:
\begin{equation}\label{eq:unc.constrained.problem}
\begin{array}{l}
f^\star:=\inf\limits_{x \in {\R^n}} f( x )\,.
\end{array}
\end{equation}
In the sequel, we assume that $f^\star>-\infty$ and let $\varepsilon>0$ be fixed.  
\begin{theorem}\label{theo:uncconstr.theo}
Consider the hierarchy of semidefinite programs indexed by $k\in\N$:
\begin{equation}\label{eq:unc.primal.problem}
{\rho_k^1}(\varepsilon) := \sup \{ {\lambda  \in \R:\ {\theta ^k}( {f- \lambda +  \varepsilon {\theta ^d}  } ) \in \Sigma {[ x ]_{k + d}}} \}\,.
\end{equation}
The following statements hold:
\begin{enumerate}
\item The sequence ${( {{\rho_k^1}}(\varepsilon))_{k \in \N}}$ is monotone non-decreasing.
\item Assume that $f^\star$ in \eqref{eq:unc.constrained.problem} is attained at $x^\star\in\R^n$. Then there exists $K\in \N$ such that $f^\star\le {\rho_k^1} (\varepsilon)\le {f^\star} +  \varepsilon\, \theta {( {{x^\star}})^d}$ for all $k\ge K$. In particular, $K$ is upper bounded by $ O(\varepsilon^{-1})$ as $\varepsilon \downarrow 0$.
\end{enumerate}
\end{theorem}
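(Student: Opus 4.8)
The plan is to reduce both statements to the representation result of Theorem \ref{theo:representation.nonnegative.poly} applied to the shifted polynomial $f-\lambda$. For statement 1, monotonicity, the key observation is that if $\lambda$ is feasible for the problem defining $\rho_k^1(\varepsilon)$, i.e. $\theta^k(f-\lambda+\varepsilon\theta^d)\in\Sigma[x]_{k+d}$, then multiplying by $\theta=1+\|x\|_2^2$, which is itself a sum of squares of degree $1$, gives $\theta^{k+1}(f-\lambda+\varepsilon\theta^d)\in\Sigma[x]_{k+1+d}$, so $\lambda$ is feasible at level $k+1$. Taking suprema yields $\rho_k^1(\varepsilon)\le\rho_{k+1}^1(\varepsilon)$. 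First I would spell out that $\Sigma[x]_{k+d}\cdot\theta\subseteq\Sigma[x]_{k+d+1}$ so the degree bookkeeping matches the definition \eqref{eq:unc.primal.problem} exactly.

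For statement 2, the upper bound $\rho_k^1(\varepsilon)\le f^\star+\varepsilon\,\theta(x^\star)^d$ holds for every $k$: if $\theta^k(f-\lambda+\varepsilon\theta^d)$ is SOS then in particular it is nonnegative on $\R^n$, and evaluating at $x^\star$ (where $\theta(x^\star)^k>0$) gives $f(x^\star)-\lambda+\varepsilon\theta(x^\star)^d\ge0$, i.e. $\lambda\le f^\star+\varepsilon\theta(x^\star)^d$; take the sup over feasible $\lambda$. The lower bound $f^\star\le\rho_k^1(\varepsilon)$ for $k$ large is where Theorem \ref{theo:representation.nonnegative.poly} enters: the polynomial $f-f^\star$ is nonnegative on $\R^n$ and lies in $\R[x]_{2d}$, so for each $\varepsilon>0$ and every $k\ge K$ with
\[
K:=\left\lceil\frac{2(n+1)d(2d-1)}{4\log 2}(\varepsilon^{-1}\|f-f^\star\|_1+1)-\frac{n+1+2d}{2}\right\rceil,
\]
one has $\theta^k(f-f^\star+\varepsilon\theta^d)\in\Sigma[x]_{k+d}$, which says exactly that $\lambda=f^\star$ is feasible for \eqref{eq:unc.primal.problem} at level $k$, hence $\rho_k^1(\varepsilon)\ge f^\star$. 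Combining with monotonicity, $f^\star\le\rho_k^1(\varepsilon)\le f^\star+\varepsilon\theta(x^\star)^d$ for all $k\ge K$.

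Finally, for the asymptotics $K=O(\varepsilon^{-1})$ as $\varepsilon\downarrow0$: in the closed-form expression for $K$ above, the only $\varepsilon$-dependent term is the factor $\varepsilon^{-1}\|f-f^\star\|_1+1$, and $\|f-f^\star\|_1=\|f\|_1+|f^\star|$ is a constant independent of $\varepsilon$; all other quantities ($n$, $d$, and the numerical constants) are fixed. Hence $K$ grows linearly in $\varepsilon^{-1}$, giving $K=O(\varepsilon^{-1})$. The main obstacle, such as it is, is purely bookkeeping: one must check that the degree truncation in Theorem \ref{theo:representation.nonnegative.poly} (namely $\Sigma[x]_{k+d}$) is literally the same as the one appearing in \eqref{eq:unc.primal.problem}, and that the constant $K$ can be read off directly from the bound in that theorem with $f$ replaced by $f-f^\star$; no new analytic input is needed beyond what has already been established.
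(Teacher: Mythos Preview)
Your proof is correct and follows essentially the same approach as the paper: monotonicity by multiplying the SOS certificate by $\theta\in\Sigma[x]_1$, the upper bound by evaluating at $x^\star$, and the lower bound by applying Theorem~\ref{theo:representation.nonnegative.poly} to $f-f^\star$. Your argument is in fact slightly cleaner, since you bypass the paper's auxiliary $\bar\varepsilon$ by directly comparing feasible sets. One small slip: the equality $\|f-f^\star\|_1=\|f\|_1+|f^\star|$ is not correct in general (only the constant coefficient changes, so $\|f-f^\star\|_1=|f_0-f^\star|+\sum_{\alpha\neq0}|f_\alpha|$), but since all you need is that this quantity is a constant independent of $\varepsilon$, the $O(\varepsilon^{-1})$ conclusion is unaffected.
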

\begin{proof}
\begin{enumerate}
\item Let $k\in \N$ and fix $\bar \varepsilon>0$, arbitrary. 
By \eqref{eq:unc.primal.problem}, there exists a real $\bar \lambda$ such that
\[{\rho _k^1}( \varepsilon) - \bar \varepsilon  \le \bar \lambda\text{ and }{\theta ^k}( {f-  \bar\lambda  + \varepsilon {\theta ^d} } ) \in \Sigma {[ x ]_{k + d}}\,.\]
Since $\theta \in \Sigma {[ x ]_{1}}$, ${\theta ^{k + 1}}( {f -  \bar \lambda +  \varepsilon {\theta ^d} } ) \in \Sigma {[ x ]_{k + d+1}}$. By \eqref{eq:unc.primal.problem}, ${\rho ^1_{k + 1}}( \varepsilon) \ge  \bar \lambda  \ge {\rho^1 _k}( \varepsilon) - \bar \varepsilon $. 
This implies that ${\rho ^1_{k + 1}}( \varepsilon) \ge {\rho^1 _k}( \varepsilon)$.
\item By \eqref{eq:unc.constrained.problem}, $f-f^\star$ is nonnegative. 
By Theorem \ref{theo:representation.nonnegative.poly}, there exists $K\in \N$ and $K= O(\varepsilon^{-1})$ as $\varepsilon \downarrow 0$ such that for all $k\ge K$,
\[{\theta ^K}( {f  - f^\star +  \varepsilon {\theta ^d}} ) \in \Sigma [ x ]_{K + d}\,.\]
By \eqref{eq:unc.primal.problem}, ${f^ \star } \le {\rho^1 _k}( \varepsilon)$ for all $k\ge K$.
Let $k\in \N$ and fix $\bar \varepsilon>0$, arbitrary. 
By \eqref{eq:unc.primal.problem}, there exists a real $\bar \lambda$ such that
\[{\rho ^1_k}( \varepsilon) - \bar \varepsilon  \le  \bar \lambda\text{ and }{\theta ^k}( {f  -  \bar \lambda +  \varepsilon {\theta ^d}} ) \in \Sigma [ x ]_{k + d}\,.\]
It follows that $f -  \bar \lambda +  \varepsilon {\theta ^d}  \ge 0$ on $\R^n$. 
From this, 
\[f^\star +  \varepsilon \theta {( {{x^\star}} )^d}=f( {{x^\star}} ) +  \varepsilon \theta {( {{x^\star}} )^d} \ge \bar  \lambda  \ge {\rho^1 _k} -\bar \varepsilon \,.\]
This implies $f^\star +  \varepsilon \theta {( {{x^\star}} )^d} \ge {\rho ^1_k}( \varepsilon)$, the desired result.
\end{enumerate}
\end{proof}
For every $k\in\N$, the dual of \eqref{eq:unc.primal.problem} reads:
\begin{equation}\label{eq:unc.dual}
\begin{array}{rl}
{\tau_k^1}(\varepsilon): = \inf &{L_y}( {{\theta ^k}( {f + \varepsilon {\theta ^d}} )} )\\
\text{s.t.}&y = {( {{y_\alpha }} )_{\alpha  \in \N^n_{2( {d + k} )}}} \subset \R\,,\\
&{M_{k + d}}( y )\succeq 0\,,\\
&{L_y}( {{\theta ^k}} ) = 1\,.
\end{array}
\end{equation}
We guarantee strong duality for previous primal-dual problems:
 \begin{proposition}\label{prop:strong.duality.unc}
Let  $k\in \N$. Then $ \rho_k^1(\varepsilon)=\tau_k^1(\varepsilon)$. 
Moreover, if  $\tau_k^1(\varepsilon)>-\infty$ then the 
optimal value $\rho_k^1(\varepsilon)$ is attained.
\end{proposition}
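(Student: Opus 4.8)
The plan is a textbook semidefinite-duality argument. As already noted right before \eqref{eq:unc.dual}, problems \eqref{eq:unc.primal.problem} and \eqref{eq:unc.dual} form a primal--dual pair of SDPs: writing the SOS membership through a Gram matrix, $\theta^k(f-\lambda+\varepsilon\theta^d)\in\Sigma[x]_{k+d}$ if and only if there is a symmetric $G\succeq0$ of size $s(k+d)$ with $\theta^k(f+\varepsilon\theta^d)-\lambda\,\theta^k=v_{k+d}(x)^{T}Gv_{k+d}(x)$, so $\rho_k^1(\varepsilon)$ is the value of an SDP with free variable $\lambda$ and PSD variable $G$, and a routine Lagrangian computation (using $M_{k+d}(y)=L_y(v_{k+d}v_{k+d}^{T})$, equivalently that a linear form $L_y$ on $\R[x]_{2(k+d)}$ is nonnegative on $\Sigma[x]_{k+d}$ iff $M_{k+d}(y)\succeq0$) identifies \eqref{eq:unc.dual} as its dual. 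Weak duality is immediate: if $h-\lambda g=\sigma\in\Sigma[x]_{k+d}$ with $h:=\theta^k(f+\varepsilon\theta^d)$, $g:=\theta^k$, and $y$ is feasible for \eqref{eq:unc.dual}, then $L_y(h)-\lambda=L_y(h)-\lambda L_y(g)=L_y(\sigma)\ge0$, whence $\rho_k^1(\varepsilon)\le\tau_k^1(\varepsilon)$.

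First I would dispose of the degenerate case: \eqref{eq:unc.dual} is always feasible (see the next paragraph), so $\tau_k^1(\varepsilon)<+\infty$; and if $\tau_k^1(\varepsilon)=-\infty$, then weak duality forces $\rho_k^1(\varepsilon)=-\infty$ as well, so the equality $\rho_k^1(\varepsilon)=\tau_k^1(\varepsilon)$ holds trivially and there is no attainment to prove. Hence it remains to treat the case where $\tau_k^1(\varepsilon)$ is finite.

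The crucial step is to exhibit a \emph{strictly feasible} point of \eqref{eq:unc.dual}. Let $\mu$ be any finite Borel measure on $\R^n$ whose support has nonempty interior (e.g.\ the standard Gaussian measure, or the Lebesgue measure restricted to a ball), let $z=(z_\alpha)_{\alpha\in\N^n_{2(k+d)}}$ be its truncated moment sequence, and set $y:=z/L_z(\theta^k)$; this is well defined because $\theta^k>0$ on $\R^n$ forces $L_z(\theta^k)=\int\theta^k\,d\mu>0$, and it satisfies $L_y(\theta^k)=1$. For any $0\neq q\in\R[x]_{k+d}$ one has $q^{T}M_{k+d}(y)\,q=L_y(q^2)=\frac{1}{L_z(\theta^k)}\int q^2\,d\mu>0$, since a nonzero polynomial cannot vanish on a set with nonempty interior; thus $M_{k+d}(y)\succ0$ and $y$ is strictly feasible for \eqref{eq:unc.dual}. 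By conic/semidefinite strong duality (equivalently, by Josz--Henrion \cite{josz2016strong}), strict feasibility of the moment problem \eqref{eq:unc.dual} together with finiteness of $\tau_k^1(\varepsilon)$ yields $\rho_k^1(\varepsilon)=\tau_k^1(\varepsilon)$ and attainment of the supremum in \eqref{eq:unc.primal.problem}, which is exactly the assertion of the proposition.

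I do not expect a genuine obstacle. The only points requiring care are getting the direction of the duality theorem right --- it is strict feasibility of the moment side \eqref{eq:unc.dual} that forces attainment on the SOS side \eqref{eq:unc.primal.problem}, not the reverse --- and accommodating the affine normalization $L_y(\theta^k)=1$ within the notion of strict feasibility, which causes no trouble since it is a single linear constraint satisfied by the constructed $y$, which is interior in the PSD block.
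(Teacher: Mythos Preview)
Your proof is correct and follows essentially the same approach as the paper's: both establish strong duality by exhibiting a strictly feasible point of the moment problem \eqref{eq:unc.dual} via the moments of a measure with full-dimensional support, then invoke Slater's condition. The only cosmetic difference is the choice of measure --- the paper uses the density $\chi_{[0,1]^n}\theta^{-k}$ with respect to Lebesgue so that $L_y(\theta^k)=1$ holds automatically, whereas you take an arbitrary such measure and rescale.
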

\begin{proof}
By Slater's constraint qualification  \cite[\S 5.2.3]{boyd2004convex}, it suffices to show that \eqref{eq:unc.dual} admits a strictly feasible solution.
 Let us denote by $\mu$ the measure with  density ${\chi _{{{[ {0,1} ]}^n}}}\theta^{-k}$ with respect to the Lebesgue measure, where $\chi _A$ is the characteristic function of a given set $A\subset \R^n$. 
Set ${y_\alpha }: = \int {{x^\alpha }d\mu }  $ for all $\alpha  \in \N^n$. 
We claim that $y_\alpha\in\R$ for all $\alpha  \in \N^n$, ${L_y}( {{\theta ^k}} ) = 1$ and ${M_{k + d}}( y )\succ 0$.
 Indeed, for all $\alpha  \in \N^n$
\[\begin{array}{rl}
| {{y_\alpha }} | &= | {\int {{x^\alpha }{\chi _{{{[ {0,1}]}^n}}}{\theta ^{ - k}}dx} } | = | {\int_{{{[ {0,1} ]}^n}} {{x^\alpha }{\theta ^{ - k}}dx} } |\\
 &\le \int_{{{[ {0,1} ]}^n}} {{{| {{x_1}} |}^{{\alpha _1}}}\dots{{| {{x_n}} |}^{{\alpha _n}}}{\theta ^{ - k}}dx}  \le 1\,,
\end{array}\]
since $\theta^{-k}\le 1$. 
Thus, $y_\alpha\in\R$ for all $\alpha  \in \N^n$. 
In addition,
\[{L_y}( {{\theta ^k}}) = \int {{\theta ^k}{\chi _{{{[ {0,1}]}^n}}}{\theta ^{ - k}}dx}  = \int_{{{[ {0,1}]}^n}} {dx}  = 1\,.\]
Let $p\in {\R^{s(d+k)}}\backslash \{ 0 \}$ be fixed.
 We state that ${p^T}{M_{d + k}}( y )p>0$. 
Assume by contradiction that ${p^T}{M_{d + k}}( y)p\le0$.
 One has
\[\begin{array}{rl}
0 &\ge {p^T}{M_{d + k}}( y )p = \int {{p^T}{v_{d + k}}v_{d + k}^Tpd\mu } \\
 &= \int {{p^T}{v_{d + k}}v_{d + k}^Tp{\chi _{{{[ {0,1}]}^n}}}{\theta ^{ - k}}dx}  = \int_{{{[ {0,1}]}^n}} {{{( {{p^T}{v_{d + k}}})}^2}{\theta ^{ - k}}dx} \ge 0\,.
\end{array}\]
It follows that ${p^T}{v_{d + k}}=0$ on ${{[ {0,1}]}^n}$, thus $p=0$  yielding a contradiction. 
From this, ${( {{y_\alpha }} )_{\alpha  \in \N_{d + k}^n}}$ is a feasible solution of \eqref{eq:unc.dual} with ${M_{k + d}}( y )\succ 0$. 
By strong duality, the conclusion follows.
\end{proof}

\subsection{Constrained case}
\label{sec:constrained_hierarchy}
Consider the following problem:
\begin{equation}\label{eq:constrained.problem}
\begin{array}{l}
f^\star:=\inf\limits_{x \in S( g)} f( x)\,,
\end{array}
\end{equation}
where $f \in \R[ x ]$, $g = \{ {{g_1},\dots,{g_m}} \} \subset \R[ x]$. Assume that $S( g) \ne \emptyset $ and $f^\star>-\infty$. Denote ${u_j}: = \lceil {\deg ( {{g_j}} )/2} \rceil $, $j=0,1,\dots,m$. Let $\varepsilon>0$ be fixed. 
\subsubsection{Unknown lower bound}
Let $d: = \lfloor \deg(f)/2\rfloor +1$.
\begin{theorem}\label{theo:constr.theo}
Consider the hierarchy of semidefinite programs indexed by $k\in\N$: 
\begin{equation}\label{eq:primal.problem}
{\rho _k^2} (\varepsilon):= \sup \{ {\lambda  \in \R:\ {\theta ^k}\,( {f - \lambda +  \varepsilon \,{\theta ^d} } ) \in Q_{k + d}( g)} \}\,.
\end{equation}
The following statements hold:
\begin{enumerate}
\item The sequence ${( {{\rho_k^2}(\varepsilon)})_{k \in \N}}$ is monotone non-decreasing.
\item Assume that problem \eqref{eq:constrained.problem} has an optimal solution $x^\star$. 
Then there exists $K\in \N$ such that $f^\star\le {\rho_k^2}(\varepsilon) \le {f^\star} +  \varepsilon\, \theta {( {{x^\star}})^d}$ for all $k\ge K$. In particular, if $0_{\R^n}\in S(g)$, then $K$ is upper bounded by $C \exp \left( O(\varepsilon^{-1})^C \right) -d$ as $\varepsilon  \downarrow  0$, for some $C>0$ depending on $g$.
\end{enumerate}
\end{theorem}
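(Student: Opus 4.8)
The plan is to run the exact two-step scheme used for Theorem~\ref{theo:uncconstr.theo}, transcribed to the constrained setting: replace the SOS cone $\Sigma[x]_{k+d}$ by the truncated quadratic module $Q_{k+d}(g)$, replace the unconstrained representation Theorem~\ref{theo:representation.nonnegative.poly} by its constrained analogue Theorem~\ref{theo:Representation.nonnegative2}, and, for the complexity assertion, use the explicit degree estimate of Proposition~\ref{prop:Complexity.Putinar-Vasilescu} instead of the bound inside Theorem~\ref{theo:representation.nonnegative.poly}.

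\emph{Monotonicity.} Fix $k\in\N$ and an arbitrary $\bar\varepsilon>0$. By \eqref{eq:primal.problem} choose $\bar\lambda\in\R$ with $\rho_k^2(\varepsilon)-\bar\varepsilon\le\bar\lambda$ and $\theta^k(f-\bar\lambda+\varepsilon\theta^d)\in Q_{k+d}(g)$, say $\theta^k(f-\bar\lambda+\varepsilon\theta^d)=\sigma_0+\sum_{j=1}^m\sigma_jg_j$ with $\sigma_0\in\Sigma[x]_{k+d}$ and $\sigma_j\in\Sigma[x]_{k+d-u_j}$. Since $\theta\in\Sigma[x]_1$, one has $\theta\sigma_0\in\Sigma[x]_{k+d+1}$ and $\theta\sigma_j\in\Sigma[x]_{k+d+1-u_j}$, so $\theta^{k+1}(f-\bar\lambda+\varepsilon\theta^d)\in Q_{k+d+1}(g)$; hence $\rho^2_{k+1}(\varepsilon)\ge\bar\lambda\ge\rho^2_k(\varepsilon)-\bar\varepsilon$, and letting $\bar\varepsilon\downarrow0$ gives $\rho^2_{k+1}(\varepsilon)\ge\rho^2_k(\varepsilon)$. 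Note the same computation shows that membership in $Q_{k+d}(g)$ is preserved under $k\mapsto k+1$ for any fixed right-hand constant.

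\emph{Sandwich bounds.} The polynomial $f-f^\star$ is nonnegative on $S(g)$, and $d=\lfloor\deg(f)/2\rfloor+1=d_1(f-f^\star)$ (the case of constant $f$ being trivial), so condition~(i) of Theorem~\ref{theo:Representation.nonnegative2} applies to $f-f^\star$ with this $d$ and the fixed $\varepsilon$: there exists $K\in\N$ with $\theta^K(f-f^\star+\varepsilon\theta^d)\in Q_{K+d}(g)$. By the last remark of the monotonicity step (taken with the constant $f^\star$), this membership holds for every $k\ge K$, hence $\rho_k^2(\varepsilon)\ge f^\star$ for all $k\ge K$. For the upper bound, fix any $k$ and $\bar\varepsilon>0$ and pick $\bar\lambda$ with $\rho_k^2(\varepsilon)-\bar\varepsilon\le\bar\lambda$ and $\theta^k(f-\bar\lambda+\varepsilon\theta^d)\in Q_{k+d}(g)$; every element of $Q_{k+d}(g)$ is nonnegative on $S(g)$ and $\theta>0$ on $\R^n$, so $f-\bar\lambda+\varepsilon\theta^d\ge0$ on $S(g)$, and evaluating at $x^\star$ yields $f^\star+\varepsilon\theta(x^\star)^d\ge\bar\lambda\ge\rho_k^2(\varepsilon)-\bar\varepsilon$. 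Letting $\bar\varepsilon\downarrow0$ gives $\rho_k^2(\varepsilon)\le f^\star+\varepsilon\theta(x^\star)^d$ for all $k$, which together with the previous inequality establishes the sandwich for $k\ge K$.

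\emph{Complexity of $K$.} Assume now $0_{\R^n}\in S(g)$ and apply Proposition~\ref{prop:Complexity.Putinar-Vasilescu} to $f-f^\star$ (nonnegative on $S(g)$, with $d=d_1(f-f^\star)$ as required): any $k_\varepsilon$ exceeding $C\exp\big((4^{d+1}d^2(n+1)^{2d}(\varepsilon^{-1}\|f-f^\star\|_{\max,d}+\max_{\bar\alpha\in\N^{n+1}_d}c_{n+1}(\bar\alpha)/c_{n+1}(2\bar\alpha)))^C\big)-d$ is an admissible choice of $K$. Here $n$, $d$, the norm $\|f-f^\star\|_{\max,d}$ and the combinatorial maximum are all independent of $\varepsilon$ (recall $f^\star$ is a fixed real number), so the only $\varepsilon$-dependent ingredient is $\varepsilon^{-1}\|f-f^\star\|_{\max,d}=O(\varepsilon^{-1})$ as $\varepsilon\downarrow0$; therefore $K$ can be taken of order $C\exp(O(\varepsilon^{-1})^C)-d$, with $C>0$ depending only on $g$ (it originates from Theorem~\ref{theo:Complexity.Putinar}). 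I do not expect a genuine obstacle here: the argument is a direct transcription of the unconstrained proof, and the only point needing a little care is the bookkeeping that isolates the $\varepsilon$-dependence in the bound of Proposition~\ref{prop:Complexity.Putinar-Vasilescu}.
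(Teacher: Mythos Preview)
Your proof is correct and follows exactly the approach the paper indicates: it transcribes the proof of Theorem~\ref{theo:uncconstr.theo} to the constrained setting by replacing $\Sigma[x]_{k+d}$ with $Q_{k+d}(g)$, invoking Theorem~\ref{theo:Representation.nonnegative2}(i) in place of Theorem~\ref{theo:representation.nonnegative.poly}, and reading the complexity bound off Proposition~\ref{prop:Complexity.Putinar-Vasilescu}. The paper gives no more detail than this outline, and your degree bookkeeping (in particular the check $d=d_1(f-f^\star)$ and the preservation of $Q_{k+d}(g)$-membership under multiplication by $\theta$) fills in precisely what is needed.
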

The proof of Theorem~\ref{theo:constr.theo} relies on Theorem \ref{theo:Representation.nonnegative2} (i) and is similar to Theorem~\ref{theo:uncconstr.theo}. 
The upper bound on $K$ is based on Proposition \ref{prop:Complexity.Putinar-Vasilescu}.
 
 For every $k\in\N$, the dual of \eqref{eq:primal.problem} reads:
\begin{equation}\label{eq:dual-sdp}
\begin{array}{rl}
{\tau_k^2}(\varepsilon): = \inf &{L_y}( {{\theta ^k}( {f +  \varepsilon {\theta ^d}} )} )\\
\text{s.t.}&y = {(y_\alpha )_{\alpha  \in \N^n_{2( {d + k})}}} \subset \R\,,\\
&{M_{k + d}}( {y})\succeq 0\,,\\
&{M_{k + d - {u_j}}}( {{g_j}y}) \succeq 0 ,\;j = 1,\dots,m\,,\\
&{L_y}( {{\theta ^k}}) = 1\,.
\end{array}
\end{equation}
We guarantee strong duality for previous primal-dual problems:
\begin{proposition}\label{prop:strong.duality.con}
There exists $K\in \N$ such that $\rho_k^2(\varepsilon)=\tau_k^2(\varepsilon)$ for all $k\ge K$. 
Moreover, if  $\tau_k^2(\varepsilon)>-\infty$, the optimal value $\rho_k^2(\varepsilon)$ is attained.
\end{proposition}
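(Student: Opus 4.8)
The plan is to follow the proof of Proposition~\ref{prop:strong.duality.unc} essentially verbatim: establish weak duality $\rho_k^2(\varepsilon)\le\tau_k^2(\varepsilon)$ for every $k$, then produce a strictly feasible solution of the moment relaxation~\eqref{eq:dual-sdp} and invoke Slater's constraint qualification~\cite[\S 5.2.3]{boyd2004convex}, which simultaneously yields $\rho_k^2(\varepsilon)=\tau_k^2(\varepsilon)$ and attainment of the primal value $\rho_k^2(\varepsilon)$ whenever it is finite. Weak duality is routine: if $\lambda$ is feasible for~\eqref{eq:primal.problem} and $y$ for~\eqref{eq:dual-sdp}, the constraints $M_{k+d}(y)\succeq 0$ and $M_{k+d-u_j}(g_jy)\succeq 0$ make $L_y$ nonnegative on $Q_{k+d}(g)$, so $L_y(\theta^k(f+\varepsilon\theta^d))-\lambda=L_y(\theta^k(f-\lambda+\varepsilon\theta^d))\ge 0$ using $L_y(\theta^k)=1$.

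For the strictly feasible point, fix a nonempty open ball $B$ on which $g_1,\dots,g_m$ are all strictly positive, and let $y$ be the moment sequence of the measure with density $c\,\chi_B\,\theta^{-k}$, where $c>0$ normalizes $L_y(\theta^k)=1$. As in Proposition~\ref{prop:strong.duality.unc}: each $y_\alpha$ is finite since $B$ is bounded and $\theta^{-k}\le 1$; for $p\ne 0$ one has $p^TM_{k+d}(y)p=c\int_B(p^Tv_{k+d})^2\theta^{-k}\,dx>0$ because $p^Tv_{k+d}$ is a nonzero polynomial and $B$ is open, hence $M_{k+d}(y)\succ 0$; and the additional ingredient is $p^TM_{k+d-u_j}(g_jy)p=c\int_B g_j\,(p^Tv_{k+d-u_j})^2\theta^{-k}\,dx>0$ since $g_j>0$ on $B$, so $M_{k+d-u_j}(g_jy)\succ 0$ for $j=1,\dots,m$. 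Thus $y$ is strictly feasible in~\eqref{eq:dual-sdp}, the localizing constraints being non-vacuous once $k\ge K:=\max_j\{u_j-d,0\}$.

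Slater's theorem then gives $\rho_k^2(\varepsilon)=\tau_k^2(\varepsilon)$ for all $k\ge K$, and attainment of $\rho_k^2(\varepsilon)$ when the common value exceeds $-\infty$; applying Theorem~\ref{theo:Representation.nonnegative2}(i) to the nonnegative polynomial $f-f^\star$ shows in addition that $\theta^k(f-f^\star+\varepsilon\theta^d)\in Q_{k+d}(g)$ for $k$ past an explicit threshold, which forces $L_y(\theta^k(f+\varepsilon\theta^d))\ge f^\star>-\infty$ on the dual-feasible set and hence $\tau_k^2(\varepsilon)\in[f^\star,+\infty)$, so enlarging $K$ past that threshold makes the ``$\tau_k^2(\varepsilon)>-\infty$'' hypothesis unconditional.

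The step I expect to be the main obstacle is the very first move in the construction: picking a ball $B$ on which all $g_j$ are positive presupposes that $S(g)$ is full-dimensional (and that none of the $g_j$ vanishes identically on a component of $\operatorname{int}S(g)$, the vacuous case $g_j\equiv 0$ being discarded). When $S(g)$ has empty interior, every measure supported on $S(g)$ has $M_{k+d}(y)$ singular for large $k$, so Slater genuinely fails on the dual; one must then argue instead via strict feasibility of the primal~\eqref{eq:primal.problem} — for $\lambda$ near $f^\star$, combining a Positivstellensatz representation of $\theta^k(f-f^\star+\tfrac{\varepsilon}{2}\theta^d)$ from Theorem~\ref{theo:Representation.nonnegative2} with the fact that $\tfrac{\varepsilon}{2}(1+\|x\|_2^2)^{k+d}$ has a positive definite (diagonal) Gram matrix, hence lies in the interior of $\Sigma[x]_{k+d}\subseteq Q_{k+d}(g)$, places $\theta^k(f-\lambda+\varepsilon\theta^d)$ in the interior of $Q_{k+d}(g)$ — which still delivers strong duality $\rho_k^2(\varepsilon)=\tau_k^2(\varepsilon)$ and dual attainment, but recovering attainment of the primal value in that regime needs a separate compactness/closedness argument. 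Reconciling the full-dimensional and degenerate cases under one threshold $K$ is the delicate part.
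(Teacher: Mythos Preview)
Your Slater-on-the-dual argument is correct and clean when $S(g)$ has nonempty interior (more precisely, when there is an open ball on which every $g_j$ is strictly positive), and in that regime it actually gives a stronger conclusion than the proposition: strong duality and primal attainment hold for \emph{every} order $k$, not just $k\ge K$. The paper itself points this out in the Remark immediately following Proposition~\ref{prop:strong.duality.con}.

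The proposition, however, is stated with no interior hypothesis on $S(g)$, and you have correctly isolated this as the obstruction. Your attempted patch via strict feasibility of the \emph{primal} is sound as far as it goes (the observation that $\tfrac{\varepsilon}{2}\theta^{k+d}$ has a positive definite diagonal Gram matrix, hence lies in $\operatorname{int}\Sigma[x]_{k+d}\subseteq\operatorname{int}Q_{k+d}(g)$, is correct), but as you note it only yields dual attainment, not the primal attainment the proposition asserts.

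The paper closes exactly this gap, and by a route different from Slater. Fix $\bar x\in S(g)$ and let $C$ be the dual objective value at the moment sequence of $\theta(\bar x)^{-k}\delta_{\bar x}$; then restrict the dual feasible set to the sublevel set $\{L_y(\theta^k(f+\varepsilon\theta^d))\le C\}$. The same Positivstellensatz input you use, namely $\theta^k(f-f^\star+\tfrac{\varepsilon}{2}\theta^d)\in Q_{k+d}(g)$ for $k\ge K$ from Theorem~\ref{theo:Representation.nonnegative2}, forces $L_y(\theta^k(f-f^\star+\tfrac{\varepsilon}{2}\theta^d))\ge 0$ for every dual-feasible $y$, whence
\[
\tfrac{\varepsilon}{2}\,y_{2\beta}\;\le\;\tfrac{\varepsilon}{2}L_y(\theta^{k+d})\;\le\;L_y\bigl(\theta^k(f+\varepsilon\theta^d)\bigr)-f^\star\;\le\;C-f^\star
\]
for all $\beta\in\N^n_{k+d}$; positive semidefiniteness of $M_{k+d}(y)$ then bounds all $y_\alpha$. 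Thus the dual optimal-solution set is nonempty and bounded, and a result of Trnovsk\'a \cite{trnovska2005strong} converts this into strong duality together with primal attainment. This is precisely the ``separate compactness/closedness argument'' you anticipated but did not carry out; with it, no interior assumption on $S(g)$ is needed, and the threshold $K$ is the one supplied by Theorem~\ref{theo:Representation.nonnegative2} rather than your $\max_j\{u_j-d,0\}$.
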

The proof of Proposition~\ref{prop:strong.duality.con} is postponed to Appendix~\ref{proof:strong.duality.con}.

\begin{remark}
If $S(g)$ has nonempty interior then strong duality holds for all orders $k$ of the  primal-dual problems \eqref{eq:primal.problem}-\eqref{eq:dual-sdp}. Indeed, by constructing a sequence of moments from the Lebesgue measure on an open ball contained in $S(g)$, one can find a strictly feasible solution of \eqref{eq:dual-sdp} and then apply Slater's constraint qualification  \cite[\S~5.2.3]{boyd2004convex}.
\end{remark}
\subsubsection{Known lower bound}
Assume that $g_m:=f-\underline f$ for some real $\underline f\le f^\star$ and let $d: = \lceil\deg( f)/2\rceil$. 
We then obtain the same conclusion as Theorem~\ref{theo:constr.theo} with replacing here $\tau_k^2(\varepsilon)$ and $\rho_k^2(\varepsilon)$ by $\tau_k^3(\varepsilon)$ and  $\rho_k^3(\varepsilon)$, respectively.
The proof relies on Theorem \ref{theo:Representation.nonnegative2} (ii) and is similar to Theorem~\ref{theo:uncconstr.theo}. 
Note that here $g_m=(f-f^\star)+(f^\star-\underline f )$ with $f-f^\star\ge 0$ on $S(g)$ and $f^\star-\underline f\ge 0$. 
The upper bound on $K$ is also based on Proposition \ref{prop:Complexity.Putinar-Vasilescu}.

The next proposition states that strong duality is guaranteed for each relaxation order $k$.
\begin{proposition}\label{prop:strong.duality.con2}
Let  $k\in \N$. 
Then  $\rho_k^3(\varepsilon)=\tau_k^3(\varepsilon)$. 
Moreover, if  $\tau_k^3(\varepsilon)>-\infty$ then the optimal value $\rho_k^3(\varepsilon)$ is attained.
\end{proposition}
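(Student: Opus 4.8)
The plan is to recognize the primal-dual pair defining $\rho_k^3(\varepsilon)$ and $\tau_k^3(\varepsilon)$ (i.e.\ \eqref{eq:primal.problem}--\eqref{eq:dual-sdp} read with $d:=\lceil\deg(f)/2\rceil$ and $g_m:=f-\underline f$) as an instance of the moment--SOS primal-dual pair treated by Josz and Henrion \cite{josz2016strong}, and then to rule out the only missing case, namely infinite optimal values, by a short explicit computation --- this is precisely where the constraint $g_m=f-\underline f$ pays off. Indeed, $\tau_k^3(\varepsilon)$ minimizes the linear functional $y\mapsto L_y(\theta^k(f+\varepsilon\theta^d))$ over the spectrahedron cut out by $M_{k+d}(y)\succeq 0$, $M_{k+d-u_j}(g_jy)\succeq 0$ $(j=1,\dots,m)$ and the single linear equality $L_y(\theta^k)=1$; this is literally a dual semidefinite relaxation in Lasserre's hierarchy, with the usual normalization $y_0=1$ replaced by $L_y(\theta^k)=1$, and $\rho_k^3(\varepsilon)$ is its SOS dual. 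Hence, as soon as the common value is finite, \cite{josz2016strong} delivers both $\rho_k^3(\varepsilon)=\tau_k^3(\varepsilon)$ and attainment of $\rho_k^3(\varepsilon)$, the latter stemming from closedness of the truncated quadratic module $Q_{k+d}(g)$.

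It then remains to check that the value is a real number for every $k$, and this is exactly the point where the hypothesis $g_m=f-\underline f$ upgrades ``for $k$ large enough'' (Proposition~\ref{prop:strong.duality.con}) to ``for every $k$''. For the lower bound, since $u_m=\lceil\deg(f)/2\rceil=d$, the multiplier attached to $g_m$ in $Q_{k+d}(g)$ ranges over $\Sigma[x]_{k+d-u_m}=\Sigma[x]_k$, which contains $\theta^k$; together with $\varepsilon\theta^{k+d}\in\Sigma[x]_{k+d}$ this gives
\[
\theta^k\bigl(f-\underline f+\varepsilon\theta^d\bigr)\;=\;\varepsilon\,\theta^{k+d}+\theta^k\,g_m\;\in\;Q_{k+d}(g)\,,
\]
so $\lambda=\underline f$ is feasible in \eqref{eq:primal.problem} and $\rho_k^3(\varepsilon)\ge\underline f>-\infty$ for all $k$. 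For the upper bound, pick $x_0\in S(g)$ (possible since $S(g)\ne\emptyset$) and take the scaled Dirac moment sequence $y_\alpha:=\theta(x_0)^{-k}x_0^\alpha$: one has $L_y(\theta^k)=1$, $M_{k+d}(y)=\theta(x_0)^{-k}v_{k+d}(x_0)v_{k+d}(x_0)^T\succeq 0$ and $M_{k+d-u_j}(g_jy)=\theta(x_0)^{-k}g_j(x_0)\,v_{k+d-u_j}(x_0)v_{k+d-u_j}(x_0)^T\succeq 0$ because $g_j(x_0)\ge 0$, so $y$ is feasible in \eqref{eq:dual-sdp} and, by weak duality, $\rho_k^3(\varepsilon)\le\tau_k^3(\varepsilon)\le f(x_0)+\varepsilon\,\theta(x_0)^d<\infty$. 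Thus $\rho_k^3(\varepsilon)=\tau_k^3(\varepsilon)\in\R$ and the supremum is attained for every $k$; in particular, in this ``known lower bound'' setting the hypothesis $\tau_k^3(\varepsilon)>-\infty$ is automatically satisfied.

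The genuinely delicate ingredient is the first step --- the no-duality-gap claim --- and inside it the closedness of $Q_{k+d}(g)$ and the verification that replacing $y_0=1$ by $L_y(\theta^k)=1$ does not disturb the separation argument of \cite{josz2016strong}; since $\theta^k$ has positive constant term, the moment feasible set is still a nonempty spectrahedron and their reasoning carries over unchanged. Should a self-contained treatment be preferred, attainment of $\rho_k^3(\varepsilon)$ follows directly from the fact that $\{\lambda:\theta^k(f-\lambda+\varepsilon\theta^d)\in Q_{k+d}(g)\}$ is a closed half-line (one moves $\lambda$ downward by adding $(\lambda-\lambda')\theta^k\in\Sigma[x]_{k+d}$), and the matching dual value from a Hahn--Banach separation in the space of truncated moment sequences. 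Everything else is routine degree bookkeeping once the decomposition $\theta^k(f-\underline f+\varepsilon\theta^d)=\varepsilon\theta^{k+d}+\theta^k g_m$ has been noticed.
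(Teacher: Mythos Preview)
Your decomposition $\theta^k(f-\underline f+\varepsilon\theta^d)=\varepsilon\,\theta^{k+d}+\theta^k g_m\in Q_{k+d}(g)$ is exactly the right identity, and it is the same one the paper exploits. The gap is in how you use it. You invoke \cite{josz2016strong} to conclude strong duality once the values are finite, but Josz--Henrion's result is proved \emph{under a ball constraint} (the paper itself cites it only in the Archimedean setting of Section~\ref{sec:Preliminaries}); that hypothesis is absent here, and your remark that ``their reasoning carries over unchanged'' because $\theta^k$ has positive constant term misidentifies what makes their argument work. Your fallback --- that the feasible $\lambda$-set is a closed half-line --- presupposes closedness of $Q_{k+d}(g)$, which is not guaranteed in general and is not established.

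The paper instead uses the identity on the \emph{moment} side, quantitatively. For any $y$ feasible in \eqref{eq:dual-sdp} with $L_y(\theta^k(f+\varepsilon\theta^d))\le C$, the constraint $M_{k}(g_my)\succeq 0$ (here $u_m=d$) gives $L_y(x^{2\alpha}g_m)\ge 0$ for each $\alpha\in\N^n_k$ (diagonal entries), hence $L_y(\theta^k g_m)\ge 0$, and therefore
\[
\varepsilon\,y_{2\beta}\;\le\;\varepsilon L_y(\theta^{k+d})\;\le\;\varepsilon L_y(\theta^{k+d})+L_y(\theta^k g_m)\;=\;L_y\!\bigl(\theta^k(f+\varepsilon\theta^d)\bigr)-\underline f\,L_y(\theta^k)\;\le\;C-\underline f
\]
for every $\beta\in\N^n_{k+d}$. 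Combined with $M_{k+d}(y)\succeq 0$, this bounds all entries of $y$; the sublevel set of \eqref{eq:dual-sdp} is thus compact, the dual optimum is attained, and Trnovsk\'a's criterion \cite{trnovska2005strong} yields $\rho_k^3(\varepsilon)=\tau_k^3(\varepsilon)$ with primal attainment --- valid for \emph{every} $k$, which is precisely the improvement over Proposition~\ref{prop:strong.duality.con} that the extra constraint $g_m=f-\underline f$ buys.
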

The proof of Proposition~\ref{prop:strong.duality.con2} is postponed to Appendix~\ref{proof:strong.duality.con2}.
\begin{remark}
A lower bound $\underline f$ of problem \eqref{eq:constrained.problem} can be obtained by solving the following SDP: 
\[\sup \{ {\lambda  \in \R:\ {\theta ^k}\,( {f - \lambda  } ) \in Q_{k + d}( g)} \}\,,\ k \in \N\,.\]
Assume that we know a lower bound $\underline f$ of problem \eqref{eq:constrained.problem}. By adding the inequality constraint $f-\underline f\geq0$ in $S(g)$, we obtain the same semialgebraic set, i.e., $S( {g \cup \{ {f - \underline f } \}} ) = S(g)$. Thus, the two problems $\inf\{f(x):\,x\in S(g)\}$ and $\inf\{f(x):\,x\in S( {g \cup \{ {f - \underline f } \}} )\}$ are identical and the primal-dual SDP relaxations with values $\rho_k^3(\varepsilon)$ and $\tau_k^3(\varepsilon)$ of the latter one satisfy strong duality for each relaxation order $k$.
\end{remark}
\paragraph{General case.} 
Since ${g_j} \ge 0$ on $S(g)$ and  $- {g_j} \ge 0$  on $S(g)$ is equivalent to ${g_j} = 0$ on $ S(g)$, $ S(g)$ can be rewritten as $ S(g,h)$ with $g =\{ {{g_1},\dots,{g_m}} \}$ is the set of polynomials involved in the inequality constraints and $h = \{ {{h_1},\dots,{h_l}} \} $ is the set of polynomials involved in the equality constraints; in addition, $\R^n=S( {\{ 0 \},\emptyset } )$. Consider the general POP:
\begin{equation}\label{eq:gen.POP}
f^\star:=\inf\limits_{x \in S( g,h)} f( x)\,,
\end{equation}
with $f^\star\in\R$ and define 
\[(d,i) = \left\{ \begin{array}{rl}
(\lceil {\deg (f)/2} \rceil,1) &\text{ if }S(g,h) = {\R^n}\,,\\
(1 + \lfloor {\deg (f)/2} \rfloor,2)&\text{ if }S(g,h) \ne {\R^n}\text{ and lower bound $\underline f$ is unknown}\,,\\
(\lceil {\deg (f)/2} \rceil,3) &\text{ otherwise and set }g_m:=f-\underline f\,.
\end{array} \right.\]
For fixed $\varepsilon>0$, one considers the following SDP relaxation of POP \eqref{eq:gen.POP}:
\begin{equation}\label{eq:primal2}
{\rho_k^i}(\varepsilon) := \sup \{ {\lambda  \in \R:\ {\theta ^k}( {f - \lambda +  \varepsilon {\theta ^d} } ) \in Q_{k + d}( g,h)} \}\,.
\end{equation}
where $g_0:=1$, ${u_j} = \lceil {\deg ( {{g_j}} )/2}\rceil $, ${w_t} = \lceil {\deg ( {{h_t}} )/2} \rceil$. 
Then $\rho_k^i( \varepsilon)$ is an upper bound of $f^\star$ when $k$ is sufficiently large. 
The dual of \eqref{eq:primal2} is the semidefinite program:
\begin{equation}\label{eq:dual2}
\begin{array}{rl}
{\tau_k^i}(\varepsilon): = \inf &{L_y}( {{\theta ^k}( {f +  \varepsilon {\theta ^d}} )} )\\
\text{s.t.}&y = {( {{y_\alpha }} )_{\alpha  \in \N^n_{2( {d + k} )}}} \subset \R\,,\\
&{M_{k + d - {u_j}}}( {{g_j}y} ) \succeq 0 \,,\;j = 0,\dots,m\,,\\
&{M_{k + d - {w_t}}}\left( {{h_t}y} \right) = 0 \,,\;t = 1,\dots,l\,,\\
&{L_y}( {{\theta ^k}} ) = 1\,,
\end{array}
\end{equation}
The zero-duality gap between SDP~\eqref{eq:dual2} and SDP~\eqref{eq:primal2} is guaranteed for large enough $k$.
\begin{remark}
The condition $\tau_k^i(\varepsilon)>-\infty$ is always satisfied whenever $k$ is sufficently large. 
Indeed by weak duality,  when $\varepsilon$ is fixed and $k$ is sufficiently large then $\tau_k^i(\varepsilon) \ge \rho_k^i(\varepsilon) \ge f^\star >-\infty$.
However, when $k$ is small, $\tau_k^i(\varepsilon) = -\infty$ may happen.
\end{remark} 
Let us now assume that the POP \eqref{eq:gen.POP} has an optimal  solution $x^\star$. 
Then the gap between  $\rho_k^i( \varepsilon)$, and $f^\star$ is at most $\varepsilon\, \theta {( {{x^\star}})^d}$. 
Therefore, $\rho _k^i ( \varepsilon)$ is indeed an approximation of $ {f^\star}$. In practice, $(\rho_k^i( \varepsilon))_{k\in\N}$ often converges to the optimal value $f_\varepsilon ^\star := \min\{ {f( x ) + \varepsilon\,\theta {{( x )}^d}\,:\, x \in S(g,h)} \}$ after finitely many steps (see Section~\ref{sec:benchs}).
\begin{remark}
\noindent (i) The term $\theta^d$ in both \eqref{eq:primal2}  and \eqref{eq:dual2} can be replaced by $\varphi_d(x,1)$ where $\varphi_d :\R^{n+1}\to \R$ is a positive form of degree $2d$. For instant,  one can select $\varphi_d(x,1)=x_1^{2d}+\dots+x_n^{2d}+1$.  

\noindent (ii) Let $r\in \N$ be fixed. For every $k$ divisible by $2r$, the term $\theta^k$ appearing in both \eqref{eq:primal2} and \eqref{eq:dual2} can be replaced by $\psi_r(x,1)^{k/(2r)}$ where $\psi_r :\R^{n+1}\to \R$ is a coercive positive form of degree $2r$. For instant,  one can select $\psi_r(x,1)=x_1^{2r}+\dots+x_n^{2r}+1$.
\end{remark}
\paragraph{\textbf{Relation between  classical optimality conditions and nonnegativity certificates}.} 
When $\rho _k^i (0)= f^\star$, the constraint qualification conditions hold at $x^\star$.
\begin{proposition}\label{prop:optimal.cond.positi.certificate}
Assume that $\rho _k^i (0)= f^\star$ for some $k\in\N$, i.e., there exists $\sigma_j\in \Sigma[x]$, $j=0,\dots,m$ and $\phi_t\in\R[x]$, $t=1,\dots,l$ such that ${\theta ^k}(f - {f^\star}) = {\sigma _0} + \sum\limits_{j = 1}^m {{\sigma _j}{g_j}}  + \sum\limits_{t = 1}^l {{\phi _t}{h_t}}$. Then the constraint qualification conditions hold at $x^\star$: 
\begin{enumerate}
\item $\sigma_j(x^\star)\ge 0$ and $g_j(x^\star)\ge 0$, for all  $j=1\dots,m$;
\item $\sigma_j(x^\star)g_j(x^\star)=0$, for all $j=1,\dots,m$;
\item ${\theta(x^\star) ^k}\nabla  f (x^\star) = \sum\limits_{j = 1}^m {{\sigma _j(x^\star)}\nabla{g_j}(x^\star)}  + \sum\limits_{t = 1}^l {{\phi _t(x^\star)}\nabla{h_t(x^\star)}}$.
\end{enumerate}
\end{proposition}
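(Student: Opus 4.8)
The plan is to work directly with the given polynomial identity
${\theta ^k}(f - {f^\star}) = {\sigma _0} + \sum_{j = 1}^m {{\sigma _j}{g_j}} + \sum_{t = 1}^l {{\phi _t}{h_t}}$,
evaluating it (and afterwards its gradient) at the global minimizer $x^\star$ and exploiting only two elementary facts: an SOS polynomial is nonnegative everywhere, and an SOS polynomial vanishing at a point has vanishing gradient there. Throughout I use that $x^\star \in S(g,h)$, that $f(x^\star) = f^\star$, and that $\theta(x^\star) = 1 + \|x^\star\|_2^2 > 0$.

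For item (1): since $x^\star \in S(g,h)$ we have $g_j(x^\star) \ge 0$ for every $j$, and since each $\sigma_j \in \Sigma[x]$ is a sum of squares it is nonnegative, so $\sigma_j(x^\star) \ge 0$. For item (2), I would evaluate the identity at $x^\star$: the left-hand side is $\theta(x^\star)^k(f(x^\star)-f^\star)=0$, and $h_t(x^\star)=0$ for all $t$, so $0 = \sigma_0(x^\star) + \sum_{j=1}^m \sigma_j(x^\star) g_j(x^\star)$. Each summand is nonnegative by item (1) and by $\sigma_0 \ge 0$, hence each one vanishes; in particular $\sigma_j(x^\star) g_j(x^\star) = 0$ for all $j$, and also $\sigma_0(x^\star) = 0$.

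For item (3), I would differentiate the identity. By the product rule $\nabla\bigl(\theta^k(f-f^\star)\bigr) = k\theta^{k-1}(\nabla\theta)(f-f^\star) + \theta^k\nabla f$, and at $x^\star$ this collapses to $\theta(x^\star)^k\nabla f(x^\star)$ because $f(x^\star)-f^\star=0$. On the right-hand side, $\nabla(\sigma_j g_j) = (\nabla\sigma_j)g_j + \sigma_j\nabla g_j$ and $\nabla(\phi_t h_t) = (\nabla\phi_t)h_t + \phi_t\nabla h_t$; at $x^\star$ the terms carrying a factor $h_t$ disappear. It then suffices to show $\nabla\sigma_0(x^\star)=0$ and $(\nabla\sigma_j(x^\star))g_j(x^\star)=0$ for each $j$. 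Since $\sigma_0$ is SOS with $\sigma_0(x^\star)=0$, the point $x^\star$ is a global minimizer of $\sigma_0$ on $\R^n$, so $\nabla\sigma_0(x^\star)=0$. For each $j$, either $g_j(x^\star)=0$, in which case the product is trivially zero, or $g_j(x^\star)>0$, in which case item (2) forces $\sigma_j(x^\star)=0$, and again $x^\star$ is a global minimizer of the SOS polynomial $\sigma_j$, so $\nabla\sigma_j(x^\star)=0$. Combining these gives $\theta(x^\star)^k\nabla f(x^\star) = \sum_{j=1}^m \sigma_j(x^\star)\nabla g_j(x^\star) + \sum_{t=1}^l \phi_t(x^\star)\nabla h_t(x^\star)$, which is item (3). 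There is no real obstacle: the only point worth spelling out is that if $\sigma = \sum_i p_i^2$ vanishes at $x^\star$ then each $p_i(x^\star)=0$, whence $\nabla\sigma(x^\star)=\sum_i 2p_i(x^\star)\nabla p_i(x^\star)=0$.
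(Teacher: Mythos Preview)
Your proof is correct and follows exactly the standard argument the paper refers to: the paper does not spell out a proof but points to \cite[Theorem~7.4]{lasserre2015introduction}, whose proof proceeds precisely by evaluating the SOS certificate at $x^\star$ to get item~(2) and $\sigma_0(x^\star)=0$, and then differentiating and using that an SOS polynomial vanishing at a point has vanishing gradient there to obtain item~(3).
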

The proof of Proposition \ref{prop:optimal.cond.positi.certificate} is similar to \cite[Theorem 7.4]{lasserre2015introduction}.

If we take an arbitrary small $\varepsilon>0$ then $\rho _k^i ( \varepsilon)$ is arbitrary close to $f^\star$ for large enough $k$. However, if one sets $\varepsilon=0$, the statement ``$\rho _K^i (0)=f^\star$ for some $K\in\N$" is not true in general as stated in the following proposition: 
\begin{proposition}\label{prop:finite.convergence}
If the first order optimality condition fails at a global minimizer of problem \eqref{eq:gen.POP}, then $\rho _k^i (0)< f^\star$ for all $k\in\N$.
\end{proposition}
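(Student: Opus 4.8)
The plan is to argue by contraposition: assume $\rho_k^i(0)=f^\star$ for some $k\in\N$, and deduce that the first order (KKT) optimality condition holds at every global minimizer $x^\star$ of \eqref{eq:gen.POP}, which contradicts the hypothesis. Note first that membership $\theta^k(f-\lambda)\in Q_{k+d}(g,h)$ implies $f-\lambda\ge0$ on $S(g,h)$ (since $\theta>0$ everywhere), hence $\lambda\le f^\star$; thus $\rho_k^i(0)\le f^\star$ for every $k$, and the contraposition hypothesis is precisely that this inequality becomes an equality for some $k$.

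The first step is to upgrade ``$\rho_k^i(0)=f^\star$'' into an actual algebraic certificate $\theta^k(f-f^\star)=\sigma_0+\sum_{j=1}^m\sigma_jg_j+\sum_{t=1}^l\phi_th_t$ with $\sigma_j\in\Sigma[x]$ and $\phi_t\in\R[x]$ of the degrees prescribed by $Q_{k+d}(g,h)$. To this end, consider the moment sequence $y^\star$ defined by $y^\star_\alpha:=(x^\star)^\alpha/\theta(x^\star)^k$: one checks it is feasible for the dual SDP \eqref{eq:dual2} (indeed $M_{k+d-u_j}(g_jy^\star)=g_j(x^\star)\,v_{k+d-u_j}(x^\star)v_{k+d-u_j}(x^\star)^T/\theta(x^\star)^k\succeq0$ because $g_j(x^\star)\ge0$, likewise $M_{k+d-w_t}(h_ty^\star)=0$ because $h_t(x^\star)=0$, and $L_{y^\star}(\theta^k)=1$), with objective value $L_{y^\star}(\theta^kf)=f(x^\star)=f^\star$, so $\tau_k^i(0)\le f^\star$. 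Since weak duality gives $\rho_k^i(0)\le\tau_k^i(0)$, the assumption forces $\tau_k^i(0)=f^\star>-\infty$, and hence, by the relevant strong duality statement among Propositions~\ref{prop:strong.duality.unc}, \ref{prop:strong.duality.con}, \ref{prop:strong.duality.con2}, the supremum $\rho_k^i(0)$ is attained. (In the case $i=2$, where strong duality is only guaranteed for large orders, one first uses that $(\rho_k^2(0))_{k\in\N}$ is monotone non-decreasing---by multiplying a representation in $Q_{k+d}(g)$ by $\theta\in\Sigma[x]_1$, exactly as in the monotonicity proof of Theorem~\ref{theo:uncconstr.theo}---so that $\rho_{k'}^2(0)=f^\star$ also for some $k'$ in the range where Proposition~\ref{prop:strong.duality.con} applies.)

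With the certificate in hand, the conclusion is exactly Proposition~\ref{prop:optimal.cond.positi.certificate}: evaluating the identity at $x^\star$ gives $0=\sigma_0(x^\star)+\sum_j\sigma_j(x^\star)g_j(x^\star)$, a sum of nonnegative terms, whence $\sigma_0(x^\star)=0$ and $\sigma_j(x^\star)g_j(x^\star)=0$ for all $j$; differentiating and evaluating at $x^\star$---using $h_t(x^\star)=0$, $f(x^\star)=f^\star$, and that $\nabla\sigma_0(x^\star)=0$ and $\nabla\sigma_j(x^\star)g_j(x^\star)=0$ since these nonnegative polynomials either vanish or are globally minimized at $x^\star$---yields $\theta(x^\star)^k\nabla f(x^\star)=\sum_j\sigma_j(x^\star)\nabla g_j(x^\star)+\sum_t\phi_t(x^\star)\nabla h_t(x^\star)$. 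Dividing by $\theta(x^\star)^k>0$ produces KKT multipliers $\lambda_j:=\sigma_j(x^\star)/\theta(x^\star)^k\ge0$ together with complementary slackness, i.e.\ the first order optimality condition holds at $x^\star$, contradicting the hypothesis. I expect the only delicate point to be the attainment step of the previous paragraph: one must rule out that $f^\star$ is merely approached as a supremum without being realized by a bounded-degree representation, and this is precisely where the dual feasible point built from $x^\star$, weak duality, and the strong duality propositions (plus monotonicity in the $i=2$ case) are needed; the rest is the routine differentiation already carried out in Proposition~\ref{prop:optimal.cond.positi.certificate}.
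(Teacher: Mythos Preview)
Your overall strategy---contraposition, then deriving the KKT conditions at every minimizer from an explicit certificate as in Proposition~\ref{prop:optimal.cond.positi.certificate}---is precisely the route the paper intends (it merely cites \cite[Proposition~3.4]{nie2014optimality}), and the differentiation argument in your last paragraph is correct.

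The gap lies in the attainment step for $i\in\{2,3\}$. Propositions~\ref{prop:strong.duality.con} and~\ref{prop:strong.duality.con2} are stated and proved for a \emph{fixed} $\varepsilon>0$: in Appendices~\ref{proof:strong.duality.con} and~\ref{proof:strong.duality.con2} the dual optimal set is shown to be bounded via inequalities of the form $\varepsilon\, y_{2\beta}\le C-f^\star$ (resp.\ $C-\underline f$), and these become vacuous at $\varepsilon=0$. So you cannot invoke those propositions at $\varepsilon=0$. Your monotonicity workaround for $i=2$ does not rescue this either: the threshold $K$ in Proposition~\ref{prop:strong.duality.con} is produced by applying Theorem~\ref{theo:Representation.nonnegative2} with the given $\varepsilon>0$, and there is no finite $K$ available at $\varepsilon=0$. (For $i=1$ your argument is sound, since the proof of Proposition~\ref{prop:strong.duality.unc} is a pure Slater argument on the dual that never uses $\varepsilon$.) In Nie's original Archimedean setting the truncated quadratic module is closed and attainment comes for free; the paper simply imports Nie's argument without commenting on this point in the non-compact setting, so strictly speaking the same lacuna is already present in the paper's one-line proof.
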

The proof of Proposition \ref{prop:finite.convergence} is similar to \cite[Proposition 3.4]{nie2014optimality}.

By applying Proposition \ref{prop:finite.convergence} to POP $\min \{x\,:\, x^3=0\}$, we obtain the statement (i) of Lemma \ref{lemma:not.first.order}. 
Indeed, the first order optimality condition fails at the global minimizer $0$ of  this problem. 
Therefore, the positivity of $\varepsilon$ 
ensures convergence of $(\rho _k^i (\varepsilon))_{k\in\N}$ to the neighborhood $[f^\star,f^\star+\varepsilon \theta (x^\star)^d]$ of the optimal value $f^\star$.

We also conjecture that $\rho _K^i (0)=f^\star$ for some $K\in\N$ when some classical optimality conditions hold at every global minimizer of \eqref{eq:gen.POP}. 
In many cases, $\rho_K^i(0)=f^\star$ with $K=0,1$ when the KKT conditions hold (see Example~\ref{exam:zero.parameter.epsilon} and \cite[Example 4.4]{nie2012discriminants} with  $x_n=1$). 
However, the KKT conditions are not enough for this conjecture due to the fact that the minimizer $x^\star=(0,0,0)$ of  dehomogenized  Delzell's polynomial in Lemma \ref{lem:positi.eps.zero} satisfies the KKT conditions and $\rho _k^i (0)<f^\star$ for all $k\in\N$ in this case.
\paragraph{\textbf{Reducing the non-compact case to a compact case}.} 
Consider the POP: $f^\star:=\inf \{f(x)\,:\,x\in S(g,h)\}$ where the feasible set $S(g,h)$ is possibly non-compact, and the associated perturbed POP: $f^\star_\varepsilon:=\inf \{f(x)+\varepsilon \theta(x)^d \,:\,x\in S(g,h)\}$ with fixed $\varepsilon>0$. Here one assumes that $f^\star$ is attained at $x^\star$ and $2d>\deg(f)$. As in Section \ref{sec:pop}, ${f^\star} \in [ {f_\varepsilon ^\star - \varepsilon\, \theta {{( {{x^\star}} )}^d},f_\varepsilon ^\star} ]$. Suppose that a point $\bar x$ in $S(g,h)$ is known. It is not hard to show that $f+\varepsilon \theta^d$ is coercive and therefore with $C:=f(\bar x)+\varepsilon \theta(\bar x)^d$, the set  $S(\{C-f-\varepsilon \theta^d\})$ is compact. Moreover,
\begin{equation} \label{eq:compact.POP.form}
f^\star_\varepsilon=\inf \{f(x)+\varepsilon \theta(x)^d \,:\,x\in S(g\cup \{C-f-\varepsilon \theta^d\},h)\}\,.
\end{equation}
Note that the quadratic module associated with the constraint set of POP \eqref{eq:compact.POP.form} is Archimedean and so $f^\star_\varepsilon$ can be approximated as closely as desired by the Moment-SOS hierarchy. This approach is similar in spirit to that of \cite{jeyakumar2014polynomial}. However, determining a point $\bar x$ in $S(g,h)$ is not easy in general. 
The hierarchy \eqref{eq:primal2} relying on Putinar-Vasilescu's  Positivstellensatz goes beyond this restriction.

\subsection{Global optimizers}
\label{sec:optimizers}
In this section we introduce a new method to find an approximation of a feasible point of a basic semialgebraic set $S( {g,h})$ as defined in \eqref{eq:S(g,h)}.
We then apply this method to obtain an approximation of a global minimizer $x^\star$ associated to   
$f^\star=\min \{ {f( x):\, x \in S( {g,h})} \}$ via finding a feasible solution of $S( {g\cup \{ {\rho _k^i ( \varepsilon)- f} \},h } )$.
\begin{remark}\label{re:algo.extr.sol}
Let $\varepsilon>0$ be fixed and $k\in\N$ be sufficiently large such that $\rho_k^i( \varepsilon)$ is an upper bound of $f^\star$.  Let $x^\star$ be a global minimizer of $f$ on $S(g,h)$ and
let $\bar x \in S( {g\cup \{ {\rho _k^i( \varepsilon) - f} \},h} )$. 
Then $\bar x \in S( {g,h} )$ and ${f^\star}\le f( {\bar x} ) \le \rho _k^i( \varepsilon) \le {f^\star}+\varepsilon \theta {( {{x^\star}} )^d}$. 
\end{remark}
Let us consider an arbitrary small $\varepsilon>0$. The difference between $\rho_k^i( \varepsilon)$ and $f^\star$ will be as closely as desired to $\varepsilon \theta {( {{x^\star}} )^d}$ for  large enough $k$.  
Assume that the solution set $S( {g\cup \{ {f^\star - f}\},h })$ is finite and denote by $y^\star_\varepsilon$ an optimal solution of SDP \eqref{eq:dual2}. 
In practice, when $k$ is sufficiently large $y^\star_\varepsilon$ satisfies numerically the flat extension condition defined in Section~\ref{sec:Preliminaries}. 
One may then use the algorithm of Henrion and Lasserre  \cite{henrion2005detecting} to extract numerically the support of a representing measure for $y^\star_\varepsilon$ which may include global minimizers of $f^\star=\min \{ {f( x ):\, x \in S( {g,h})}\}$ (see the same extraction in \cite[\S 3.2]{jibetean2005semidefinite}). 
However we cannot guarantee the success of this extraction procedure in theory because the set $S( {g\cup \{ {\rho _k^i( \varepsilon) - f} \},h } )$ may not be zero dimensional when $\rho _k^i( \varepsilon) > f^\star$.
For example, if $f=\|x\|_2^2$ and $S(g,h)=\R^n$, $S( {g\cup \{ {\rho _k^i( \varepsilon) - f} \},h } )$ is a closed ball centered at the origin with radius $\rho _k^i( \varepsilon)^{1/2}$.
The following method aims at overcoming this issue from both theoretical and algorithmic sides.
\paragraph{\textbf{The Adding-Spherical-Constraints method (ASC)}:}
For $a\in\R^n$ and $r\ge 0$, let $\overline {B( {a,r} )}$ (resp. $\partial {B( {a,r} )} $) be the closed ball (resp. sphere) centered at $a$ with radius $r$, i.e.,
\[\overline {B( {a,r} )}  = \{ {x \in {\R^n}:\, \| {x - a} \|_2 \le r} \} \quad\text{(resp.}\quad \partial {B( {a,r} )}  = \{ {x \in {\R^n}:\, \| {x - a} \|_2 = r} \}\text{)}\,.\]
The following result provides an efficient way to find a sequence of additional spherical equality constraints for a given semialgebraic set such that (i) the resulting set is a singleton (i.e., it contains a 
{\em single} real point), and (ii) this point is solution of a non-singular system of {\em linear equations}. 
\begin{lemma}\label{lem:adding.sphere.ine}
Assume that $S( g,h ) \ne \emptyset $. Let ${( {{a_t}} )_{t = 0,\dots,n}} \subset {\R^n}$ such that ${a_t} - {a_0},\, t = 1,\dots,n$ are linear independent in $\R^n$. Let us define the sequence ${( {{\xi_t}} )_{t = 0,1,\dots,n}} \subset {\R_ + }$ as follows:
\begin{equation}\label{eq:radius.poly.sys}
\left\{ \begin{array}{l}
{\xi _0} := \min \{ {{{\| {x - {a_0}} \|}_2^2}:\, x \in S( {g,h} )} \}\,,\\
{\xi _t}: = \min \{ {{{\| {x - {a_t}} \|}_2^2}:\, x \in S( {g,h \cup \{ {{\xi _j} - {{\| {x - {a_j}} \|}_2^2}:\, j = 0,\dots,t - 1} \}} )} \}\,,\\ 
\quad\quad\qquad\qquad\qquad\qquad\qquad\qquad\qquad\qquad\qquad\qquad\qquad\ \ \ \  t = 1,\dots,n\,.
\end{array} \right.
\end{equation}
Then there exists a unique real point $x^\star$ in $S( {g,h \cup \{ {{\xi _t} - {{\| {x - {a_t}} \|}_2^2}:\, t = 0,\dots,n} \}} )$ which satisfies the non-singular linear system of equations
\begin{equation}\label{eq:linear.equation}
{( {{a_t} - {a_0}} )^T}x^\star = -\frac{1}{2}({\xi_t} - {\xi_0} -{\| {{a_t}} \|_2^2} + {\| {{a_0}} \|_2^2}),\ t = 1,\dots,n\,.
\end{equation}
\end{lemma}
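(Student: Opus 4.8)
The plan is to show, by induction on $t$, that the sets
$S_t := S(g, h \cup \{\xi_j - \|x-a_j\|_2^2 : j = 0,\dots,t\})$ form a nested decreasing chain of nonempty sets, each of them the intersection of $S(g,h)$ with the intersection of $t+1$ spheres, and to track how the affine hull of $S_t$ shrinks. The key observation is that if $x,x'$ both lie on a sphere $\partial B(a_j,r_j)$, then subtracting the two equations $\|x-a_j\|_2^2 = \|x'-a_j\|_2^2$ gives a \emph{linear} relation $(x-x')^T(x+x'-2a_j)=0$; more usefully, if $x,x'$ both lie on two spheres $\partial B(a_0,r_0)$ and $\partial B(a_t,r_t)$, subtracting those two quadratic equations from each other eliminates the quadratic term $\|x\|_2^2$ and yields the affine equation $2(a_t-a_0)^Tx = \xi_0 - \xi_t - \|a_0\|_2^2 + \|a_t\|_2^2$, i.e. precisely \eqref{eq:linear.equation}. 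So every point of $S_n$ satisfies the linear system \eqref{eq:linear.equation}, whose coefficient matrix has rows $(a_t-a_0)^T$, $t=1,\dots,n$, which is nonsingular exactly because the $a_t-a_0$ are linearly independent. Hence $S_n$ has \emph{at most one} point, namely the unique solution $x^\star$ of \eqref{eq:linear.equation}.

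It remains to prove that $S_n$ is nonempty, i.e. that each minimization in \eqref{eq:radius.poly.sys} is over a nonempty set and is attained. First I would argue feasibility and attainment step by step: $S_0 = S(g,h)\ne\emptyset$ by hypothesis; the function $x\mapsto\|x-a_0\|_2^2$ is coercive, so its infimum $\xi_0$ over the closed (though possibly non-compact) set $S(g,h)$ is attained at some point, and consequently $S_1 = S(g,h)\cap \partial B(a_0,\sqrt{\xi_0})$ is nonempty. Inductively, assuming $S_t\ne\emptyset$, the set $S_t$ is contained in the \emph{bounded} set $\partial B(a_0,\sqrt{\xi_0})$ and is closed (it is cut out by finitely many polynomial equalities and inequalities), hence compact; therefore the continuous function $x\mapsto\|x-a_{t+1}\|_2^2$ attains its minimum $\xi_{t+1}$ on $S_t$, so $S_{t+1} = S_t\cap\partial B(a_{t+1},\sqrt{\xi_{t+1}})\ne\emptyset$. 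This carries the induction through $t=n$, so $S_n\ne\emptyset$, and combined with the previous paragraph $S_n = \{x^\star\}$.

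The last point to nail down is why $x^\star$, the unique solution of the nonsingular system \eqref{eq:linear.equation}, is genuinely a \emph{real} point of $S_n$ rather than there being some subtlety with complex solutions: this is immediate here because we constructed $S_n$ as a nonempty subset of $\R^n$, showed it is contained in the solution set of \eqref{eq:linear.equation}, and that solution set is a singleton; hence $S_n$ equals that singleton and its unique element is real. The main obstacle, and the only place requiring genuine care, is the compactness/attainment bookkeeping in the induction: one must observe that after the \emph{first} spherical constraint is added the feasible set becomes bounded (a crucial use of non-compactness being harmless past step $0$), so that from then on minima are attained without any further coercivity or closedness-at-infinity assumption. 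The reduction of the two quadratic sphere equations to the single affine equation \eqref{eq:linear.equation} is a short computation and will be presented as such.
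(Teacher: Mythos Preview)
Your proof is correct and follows essentially the same route as the paper: induction to show each successive intersection of $S(g,h)$ with the spheres is nonempty (you are in fact more explicit than the paper about why the minima are attained, via coercivity at step~$0$ and compactness thereafter), followed by subtracting the sphere equations $\|x-a_t\|_2^2=\xi_t$ and $\|x-a_0\|_2^2=\xi_0$ to obtain the nonsingular linear system~\eqref{eq:linear.equation} and conclude uniqueness. The only slip is a harmless off-by-one in your indexing of the sets $S_t$ (your definition gives $S_0=S(g,h)\cap\partial B(a_0,\sqrt{\xi_0})$, not $S(g,h)$), which you should tidy up.
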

The proof of Lemma~\ref{lem:adding.sphere.ine} is postponed to Appendix~\ref{proof:adding.sphere.ine}.
\begin{center}
    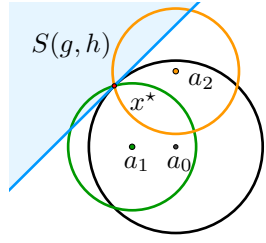
\begin{figure}[htp]
    \begin{center}
\begin{tikzpicture}[scale=\textwidth/30cm,samples=200]
\clip(-5.42,-3.08) rectangle (3.14,4.72);
\fill[line width=1pt,color=qqzzff,fill=qqzzff,fill opacity=0.1] (-6.28,-2.28) -- (-7.08,7.08) -- (2.77,6.77) -- cycle;
\draw [line width=1pt] (0,0) circle (2.8284271247461903cm);
\draw [line width=1pt,color=qqzzqq] (-1.42,0) circle (2.082402458700047cm);
\draw [line width=1pt,color=ffzzqq] (0,2.48) circle (2.0567936211491906cm);
\draw [line width=1pt,color=qqzzff] (-6.28,-2.28)-- (-7.08,7.08);
\draw [line width=1pt,color=qqzzff] (-7.08,7.08)-- (2.77,6.77);
\draw [line width=1pt,color=qqzzff] (2.77,6.77)-- (-6.28,-2.28);
\draw (-0.6,0) node[anchor=north west] {$a_0$};
\draw (-2,0) node[anchor=north west] {$a_1$};
\draw (0.1,2.7) node[anchor=north west] {$a_2$};
\draw (-1.8,2.1) node[anchor=north west] {$x^\star$};
\draw (-5,4.06) node[anchor=north west] {$S(g,h)$};
\begin{scriptsize}
\draw [fill=uuuuuu] (0,0) circle (2pt);
\draw [fill=ffqqqq] (-2,2) circle (2pt);
\draw [fill=qqzzqq] (-1.42,0) circle (2.5pt);
\draw [fill=ffzzqq] (0,2.48) circle (2.5pt);
\draw [fill=xdxdff] (-6.28,-2.28) circle (2.5pt);
\draw [fill=ududff] (-7.08,7.08) circle (2.5pt);
\draw [fill=xdxdff] (2.77,6.77) circle (2.5pt);
\draw[color=xdxdff] (-5.34,4.95) node {$I$};
\draw[color=qqzzff] (-6.28,2.67) node {$i$};
\end{scriptsize}
\end{tikzpicture}
\end{center}
    \caption{Illustration of Lemma \ref{lem:adding.sphere.ine}.}
    \label{fig:adding.spheres}
  \end{figure}
\end{center}
Geometrically speaking, we find a sequence of spheres ${\partial {B( {{a_t},\xi _t^{1/2}})} }$, $t=0,\dots,n$, such that the intersection between these spheres and $S( g,h )$ is the singleton $\{x^\star\}$ (see Figure \ref{fig:adding.spheres}).
Next, we use Lasserre's hierarchy  to compute the optimal values $\xi _t$, $t=0,\dots,n$ of problem \eqref{eq:radius.poly.sys}.
\begin{theorem}\label{theo:poly.sys.ASIM}
Assume that $S(g, h ) \cap \overline {B( {0,L^{1/2}})}  \ne \emptyset $ for some $L>0$. 
Let ${( {{a_t}} )_{t = 0,1,...,n}} \subset {\R^n}$ such that ${a_t}-a_0,\ t = 1,\dots,n,$ are linear independent in $\R^n$. 
Assume that the Moment-SOS hierarchies associated with the following POPs:
\begin{equation}\label{eq:radius.poly.2}
\left\{ \begin{array}{l}
{\xi _0} := \min \{ {{{\| {x - {a_0}} \|}_2^2}:\, x \in S( {g\cup \{ {L - {{\| {x } \|}_2^2}} \},h} ) } \}\,,\\
{\xi _t}: = \min \{ {{\| {x - {a_t}} \|}_2^2}:\, x \in S( g,h\cup {\{ {{\xi _j} - {{\| {x - {a_j}} \|}_2^2}:\, j = 0,\dots,t - 1} \}} ) \} \,,\,\\
\qquad\qquad\qquad\qquad\qquad\qquad\qquad\qquad\qquad\qquad\qquad\qquad \ \ \ \  t = 1,\dots,n\,,
\end{array} \right.
\end{equation}
have finite convergence, and let $w:=\max \{ {u_j,{w_q},1} \}$. For every $k\in \N$, consider the following semidefinite programs:
\begin{equation}\label{eq:approx.raddi.square.0}
\left\{ \begin{array}{l}
\begin{array}{rl}
\eta _{k}^0: = \mathop {\inf }\limits_{y \subset {\R^{s( {2(k+w)} )}}}& {L_y}( {{{\| {x }-a_0 \|}_2^2}} )\\
\text{s.t. }&{M_{k+w-u_j}}( g_jy ) \succeq 0\,,\\
&{M_{k+w - 1}}( {( {L - {{\| {x } \|}_2^2}} )y} ) \succeq 0\,,\\
&{M_{{k+w} - {w_q}}}( {h_qy}) = 0\,,\\
&{y_0} = 1\,,
\end{array}\\
\begin{array}{rl}
\eta _{k}^t: = \mathop {\inf }\limits_{y \subset {\R^{s( {2(k+w)} )}}}& {L_y}( {{{\| {x - {a_t}} \|}_2^2}} )\\
\text{s.t. }&{M_{k+w-u_j}}( g_jy ) \succeq 0\,,\\
&{M_{{k+w} - {w_q}}}( {h_qy} ) = 0\,,\\
&{M_{k+w - 1}}( {( {{\eta _{k}^ j} - {{\| {x - {a_j}} \|}_2^2}} )y} ) = 0\,,\, j = 0,\dots,t - 1\,,\\
&{y_0} = 1\,,
\end{array}\\
\qquad\qquad\qquad\qquad\qquad\qquad\qquad\qquad\qquad\qquad\qquad\qquad  t=1,\dots,n\,.
\end{array} \right.
\end{equation} 
Then there exists $K\in \N$ such that for all $k\ge K$, $\eta _{k}^t = {\xi _t}$, $t = 0,\dots,n$. 
Moreover, there exist $t \in \left\{ {0,\dots,n} \right\}$ and $\tilde K\in \N$ such that for all $k\ge \tilde K$, the solution $y$ of SDP \eqref{eq:approx.raddi.square.0} with value $\eta _{k}^t$ satisfies the flat extension condition, i.e.,
$\text{rank}\left( {{M_{k+w}}\left( y \right)} \right) = \text{rank}\left( {{M_{k}}\left( y \right)} \right)$.
 In addition, $y$ has a representing $\text{rank}\left( {{M_k}\left( y \right)} \right)$-atomic measure $\mu$ and 
${\mathop{\rm supp}\nolimits} \left( \mu  \right) \subset S\left( {g,h} \right)$.
\end{theorem}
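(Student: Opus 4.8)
The plan is to prove the two assertions of Theorem~\ref{theo:poly.sys.ASIM} by induction on $t$, using at each stage the convergence of the Moment-SOS hierarchy to its finite value and the extraction machinery of Henrion-Lasserre via the flat extension condition. First I would observe that for $t=0$ the feasible set $S(g\cup\{L-\|x\|_2^2\},h)$ is nonempty (by hypothesis $S(g,h)\cap\overline{B(0,L^{1/2})}\neq\emptyset$) and, crucially, \emph{compact}; hence the Moment-SOS hierarchy for $\xi_0$ converges, and by the assumed finite convergence $\eta_k^0=\xi_0$ for $k$ large enough. I would then run the induction: assuming $\eta_k^j=\xi_j$ for $j<t$ when $k$ is large, the SDP defining $\eta_k^t$ has constraint set matching the POP defining $\xi_t$ (note that the equality constraints $\eta_k^j-\|x-a_j\|_2^2=0$ substitute the exact $\xi_j$'s once $k$ is beyond the common threshold), and the feasible set of that POP is contained in $S(g\cup\{L-\|x\|_2^2\},h)$ hence compact; finite convergence of its hierarchy then gives $\eta_k^t=\xi_t$ for $k$ large. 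Taking the maximum $K$ of the finitely many thresholds over $t=0,\dots,n$ yields the first claim.

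For the second claim I would invoke Lemma~\ref{lem:adding.sphere.ine}: the set $S(g,h\cup\{\xi_t-\|x-a_t\|_2^2: t=0,\dots,n\})$ is the singleton $\{x^\star\}$ with $x^\star\in S(g,h)$. The key point is that the last POP in the chain \eqref{eq:radius.poly.2} (the one defining $\xi_n$, or indeed the auxiliary POP $\min\{0: x\in S(g,h\cup\{\xi_t-\|x-a_t\|_2^2:t=0,\dots,n\})\}$) has a \emph{finite} — in fact one-point — real solution set. For a POP whose real variety is finite, the truncated moment matrices of any optimal sequence stabilize in rank, so the flat extension condition $\rank(M_{k+w}(y))=\rank(M_k(y))$ holds for $k$ large; this is the standard consequence of Curto-Fialkow flat extension together with the fact that a sequence supported on one point has moment matrices of rank $1$ eventually (more generally rank equal to the number of points). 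I would apply this to one specific $t$ in the chain — say $t=n$, whose POP $\eta_k^n$ carries all $n+1$ spherical equalities and hence has singleton feasible set — to obtain $\tilde K$ and the rank condition; by the flat extension theorem the optimal $y$ then has a (rank $M_k(y)$)-atomic representing measure $\mu$ whose support lies in the feasible set, which is contained in $S(g,h)$.

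A minor subtlety I would address carefully: in the SDPs \eqref{eq:approx.raddi.square.0} the coefficients $\eta_k^j$ appearing inside the localizing constraints for stage $t$ are the \emph{computed} values from earlier stages, not the exact $\xi_j$'s, so the argument is genuinely sequential — one must first push $k$ past the threshold ensuring $\eta_k^j=\xi_j$ for all $j<t$ before the stage-$t$ SDP coincides with the exact POP. This is handled cleanly by the nested choice of thresholds $K_0\le K_1\le\cdots\le K_n$ and setting $K:=K_n$. Another point is that $w:=\max\{u_j,w_q,1\}$ is chosen so that every localizing matrix, including those for $L-\|x\|_2^2$ and for the spherical equalities (degree $2$, so $w_q=1$ suffices), is well-defined at order $k+w$; this just needs to be remarked.

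I expect the main obstacle to be the second claim: one must argue that the flat extension condition is \emph{eventually} satisfied, not merely satisfiable. For a zero-dimensional real variety this follows because the Hankel/moment operator of the unique atomic optimal measure has finite rank, and the truncated moment matrices $M_k(y)$ of an optimal solution $y$ — which under finite convergence must itself be (close to) that atomic measure's moment sequence — have rank bounded uniformly in $k$, forcing $\rank M_{k+w}(y)=\rank M_k(y)$ once $k$ exceeds that bound. Making this rigorous requires either citing the flat-extension-based finite convergence result for POPs with finite real varieties (e.g. Laurent, Curto-Fialkow) or arguing directly that, since $S(g,h\cup\{\xi_t-\|x-a_t\|_2^2:t\le n\})=\{x^\star\}$, any optimal moment sequence of the corresponding relaxation is $\delta_{x^\star}$ truncated, whose moment matrices have rank $1$ for all orders $\ge 0$ — so the rank condition holds with $\tilde K$ essentially as soon as the relaxation is exact. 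I would lean on the latter, cleaner route for the chosen index $t=n$.
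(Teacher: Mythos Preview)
Your proposal is correct and follows the same strategy as the paper's proof: induction on $t$ with nested thresholds $K_0\le\cdots\le K_n=:K$ for the first claim, and Lemma~\ref{lem:adding.sphere.ine} together with the unique-minimizer/rank-one argument at stage $t=n$ for the second. One correction: the POP defining $\xi_n$ carries only $n$ spherical equalities (indices $j=0,\dots,n-1$), so its \emph{feasible set} is not a singleton; rather---as the paper argues and as you yourself say in your final paragraph---its \emph{set of minimizers} is the singleton $\{x^\star\}$, and it is this (combined with finite convergence) that forces any optimal moment sequence to be the truncation of $\delta_{x^\star}$ and hence to satisfy the flat extension condition.
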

The proof of Theorem~\ref{theo:poly.sys.ASIM} is postponed to Appendix~\ref{proof:poly.sys.ASIM}.
\begin{remark}\label{re:generically.ASEM}
In \cite{nie2014optimality} Nie has proved that under the Archimedean assumption, Lasserre's hierarchy has finite convergence generically. 
Thus the conclusion of Theorem \ref{theo:poly.sys.ASIM} is true generically. 
In the final conclusion of Theorem \ref{theo:poly.sys.ASIM}, when $y$ has representing $\rank( {{M_k}( y )} )$-atomic measure $\mu$, we may use the extraction algorithm  \cite{henrion2005detecting} to obtain the atomic support of $\mu$.
\end{remark}
Based on Theorem \ref{theo:poly.sys.ASIM}, Algorithm \ref{alg:ASIM} below finds a feasible point in a nonempty (possibly non-compact) semialgebaric set $S\left( {g,h} \right)$. 
\begin{algorithm}\label{alg:ASIM} PolySys\\
\textbf{Input:} $S( {g,h})\ne \emptyset$, ${( {{a_t}} )_{t = 0,\dots,n}} \subset {\R^n}$ such that ${a_t} - {a_0},\, t = 1,\dots,n$ are linear independent, $\varepsilon>0$ and $k\in\N$.\\ 
\textbf{Output:} $\bar x$.\\
Begin with $t:=0$ and do:
\begin{enumerate}
\item Solve SDP \eqref{eq:primal2} with $f = {\| {x }\|_2^2}$ to obtain $\rho^i_k( \varepsilon)$. Set $L:=\rho^i_k( \varepsilon)$ and go to step 2.
\item Solve SDP \eqref{eq:approx.raddi.square.0} to obtain $\eta _{k}^t$ and an associated solution $y$. 
\begin{enumerate}
\item If $t\le n$ and $\rank\,( {{M_{k+w}}( y )} ) = \rank\,( {{M_k}( y )} )$, i.e., $y$ has a representing measure $\mu$, extract ${\mathop{\rm supp}\nolimits} ( \mu  )$ from $y$ by using the algorithm from \cite{henrion2005detecting}. Take ${\bar x}\in {\mathop{\rm supp}\nolimits} ( \mu  )$ and stop. 
\item If $t\le n$ and $\rank\,( {{M_{k+w}}( y )} ) \ne \rank\,( {{M_k}( y )} )$, set $t:=t+1$ and do again step 2. 
\item If $t= n+1$, stop. 
\end{enumerate}
\end{enumerate}

\end{algorithm}
\begin{proposition}\label{prop:ASEM.terminate}
For $k$ sufficiently large, Algorithm \ref{alg:ASIM} terminates and $\bar x\in S(g,h)$ generically.
\end{proposition}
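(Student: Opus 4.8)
The proposition is, in essence, a bookkeeping corollary of Theorem~\ref{theo:poly.sys.ASIM}, so the plan is to check that the hypotheses of that theorem are met by the data that Step~1 of Algorithm~\ref{alg:ASIM} feeds into Step~2, and then to follow the loop of Step~2 to its exit. Here ``generically'' means, as in Remark~\ref{re:generically.ASEM}, that the Moment--SOS hierarchies attached to the POPs \eqref{eq:radius.poly.2} converge in finitely many steps; by Nie \cite{nie2014optimality} this is a generic situation, since each such POP carries either the ball constraint $L-\|x\|_2^2\ge 0$ (for $\xi_0$) or a spherical equality $\xi_j-\|x-a_j\|_2^2=0$ (for $\xi_t$, $t\ge 1$), hence has a compact feasible set.

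First I would verify the feasibility hypothesis of Theorem~\ref{theo:poly.sys.ASIM} for the value $L=\rho_k^i(\varepsilon)$ produced in Step~1. Since $f=\|x\|_2^2$ is coercive and $S(g,h)$ is a nonempty closed set, the POP $\min\{\|x\|_2^2:x\in S(g,h)\}$ attains its value at some $x^\star$; put $f^\star:=\|x^\star\|_2^2$. By the convergence analysis of the hierarchy \eqref{eq:primal2} (see Theorem~\ref{theo:constr.theo} and the subsequent discussion), there is $K_0\in\N$ with $\rho_k^i(\varepsilon)\ge f^\star$ for all $k\ge K_0$. Hence, for such $k$, $L=\rho_k^i(\varepsilon)\ge\|x^\star\|_2^2$, so $x^\star\in S(g,h)\cap\overline{B(0,L^{1/2})}$, and in particular the first POP in \eqref{eq:radius.poly.2} is consistent, i.e., $S(g\cup\{L-\|x\|_2^2\},h)\ne\emptyset$.

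Next I would invoke Theorem~\ref{theo:poly.sys.ASIM} itself. Generically it supplies $K_1\in\N$ and an index $t^\star\in\{0,\dots,n\}$ such that, for all $k\ge K_1$: $\eta_k^t=\xi_t$ for $t=0,\dots,n$, and the optimal solution $y$ of the SDP \eqref{eq:approx.raddi.square.0} with value $\eta_k^{t^\star}$ satisfies the flat extension condition $\rank(M_{k+w}(y))=\rank(M_k(y))$ and carries a $\rank(M_k(y))$-atomic representing measure $\mu$ with $\operatorname{supp}(\mu)\subset S(g,h)$. Now fix any $k\ge\max\{K_0,K_1\}$ and run the algorithm. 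Step~1 produces the above $L$; in Step~2 the index $t$ increases from $0$, and since $k\ge K_1$ the radii $\eta_k^j$ used along the way equal the true $\xi_j$, so the SDPs actually solved are exactly those of \eqref{eq:approx.raddi.square.0}. The loop is finite (it runs $t=0,1,\dots$ and is cut off at $t=n+1$), so the algorithm always terminates; it remains to see that it exits through branch~2(a) at some $t\le t^\star\le n$ and never reaches branch~2(c). Indeed, the flat extension test succeeds at $t=t^\star$ (and possibly at some smaller $t$); and whenever it succeeds for a feasible $y$ of \eqref{eq:approx.raddi.square.0}, $y$ has an atomic representing measure supported on the set defined by $g_j\ge 0$, $h_q=0$ and the spherical equalities, which is contained in $S(g,h)$. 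The algorithm then extracts $\operatorname{supp}(\mu)$ and outputs $\bar x\in\operatorname{supp}(\mu)\subset S(g,h)$.

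The step I expect to be the crux --- and the reason to route everything through Theorem~\ref{theo:poly.sys.ASIM} rather than re-prove it --- is the assertion that a \emph{flat} moment solution of the SDP \eqref{eq:approx.raddi.square.0} actually comes from an atomic measure supported in $S(g,h)$, including at an index $t<t^\star$ where flat extension was not predicted in advance; this is the Curto--Fialkow flat extension theorem for the truncated moment problem constrained by $g$, $h$ and the spherical equalities, and it is precisely the content of the last sentence of Theorem~\ref{theo:poly.sys.ASIM}. A secondary point needing care is that the intermediate values $\eta_k^j$ must coincide with the genuine radii $\xi_j$ for the later SDPs to be the ones in \eqref{eq:approx.raddi.square.0}; this is exactly where the qualifiers ``for $k$ sufficiently large'' and ``generically'' (finite convergence of Lasserre's hierarchy, Nie \cite{nie2014optimality}) enter.
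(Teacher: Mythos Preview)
Your proposal is correct and follows essentially the same route as the paper: the paper's own proof is the single sentence ``The proof follows from Theorem~\ref{theo:poly.sys.ASIM} and Remark~\ref{re:generically.ASEM},'' and you have simply unpacked what that sentence means---verifying that Step~1 produces an $L$ for which the hypothesis $S(g,h)\cap\overline{B(0,L^{1/2})}\ne\emptyset$ of Theorem~\ref{theo:poly.sys.ASIM} holds, invoking Nie's genericity result (which is exactly the content of Remark~\ref{re:generically.ASEM}) to justify the finite-convergence assumption, and then tracing the loop in Step~2. Your write-up is in fact more careful than the paper's, in particular in noting that an early exit at some $t<t^\star$ still yields $\bar x\in S(g,h)$ via Curto--Fialkow.
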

\begin{proof}
The proof follows from Theorem \ref{theo:poly.sys.ASIM} and Remark \ref{re:generically.ASEM}.
\end{proof}

In Algorithm \ref{alg:ASIM}, step 1 computes the radius $L^{1/2}$ of the ball $\overline {B( {0,L^{1/2}} )}$ which has non-empty intersection with $S({g,h})$. Then step 2 checks the flat extension condition and extracts the solution $\bar x$. 
\begin{remark}
At step 2 in Algorithm \ref{alg:ASIM}, for $k$ sufficiently large, the rank of the moment matrix $\rank( {{M_{k+w}}( y )} )$  decreases to one when $t$ goes from $0$ to $n$. 
Indeed, for each $t$ between $0$ and $n$, we replace the semialgebraic set $S( {g,h} )$ by its intersection with the  $t$ spheres ${\partial {B( {{a_j},\xi _j^{1/2}} )} }$, $j=0,\dots,t-1$. This intersection  includes the support of the measure with moments $y$. 
Since $S( {g,h}) \cap \bigcap_{j = 0}^n {\partial B( {{a_j},\xi _j^{1/2}} )}  = \{ {{x^\star}} \}$,
 this support converges to $\{x^\star\}$ when $t$ goes from $0$ to $n$. 
Thus for large enough $k$, the solution $y$ of SDP \eqref{eq:approx.raddi.square.0} with value $\eta^n_k$ has a representing measure supported on ${x^\star} = ( {{y_{{e_1}}},\dots,{y_{{e_n}}}} )$.  
Here $e_j$, $j=1,\dots,n$ is canonical basis of $\R^n$.
\end{remark}
The decrease of the moment matrix rank in Algorithm \ref{alg:ASIM} for the kissing number problem with $g_1 = x_1^2 + x_2^2 + x_3^2 + x_4^2 -2x_1x_3-2x_2x_4-1$, $h_1 = x_1^2 + x_2^2 - 1$ and $h_2 = x_3^2 + x_4^2 - 1$ is illustrated in Table \ref{tab:rank.decreas}. Here $e_j$, $j=1,\dots,4$ is the canonical basis of $\R^4$. In this example, $\rank( {{M_1}( y )} )$ decreases from $5$ to $1$ when $t$ goes from $0$ to $4$ and  $M_1(y)$ fulfills the flat extension condition at from $t=3$. 
\begin{table}
    \caption{\small Decrease of the moment matrix rank in Algorithm \ref{alg:ASIM}.}
    \label{tab:rank.decreas}
\scriptsize
\begin{center}
\begin{tabular}{|c c| c c c|c|}
\hline 
$t$&$a_t$&$\eta^t_0$& $\rank( {{M_1}( y )} )$& $\rank( {{M_0}( y )} )$ & $\bar x$\\ 
\hline 
$0$&$0_{\R^4}$&$2.0000$&$5$&$1$&$-$\\

$1$&$e_1$&$1.0000$&$3$&$1$&$-$\\
 
$2$&$e_2$&$2.9997$&$3$&$1$&$-$\\

$3$&$e_3$&$1.9998$&$1$&$1$&
(0.9999,

   0.0001,
   
   0.5028,
   
  -0.8611)
\\

$4$&$e_4$&$1.3089$&$1$&$1$&
(1.0000,

   0.0002,
   
   0.4968,
   
   0.8329)\\
\hline 
\end{tabular}    
\end{center}
\end{table}
\begin{remark}
ASC can be used to find an approximation of a real point in $S( {g,h} )$ even if $S( {g,h})$ is positive dimensional. 
This is illustrated later on by our numerical experiments from Section \ref{sec:benchs} (see the polynomial systems corresponding to Id 6, 7, 8 and 13).
\end{remark}
\paragraph{\textbf{Obtaining a minimizer by using the ASC method}:}
We rely on the following algorithm to find the value $\rho_k^i( \varepsilon)$ of SDP \eqref{eq:primal2}, which approximates $f^\star=\min \{ {f( x ):\, x \in S( {g,h} )}\}$, together with an approximation $ \bar x $ of  a minimizer $x^\star$ for this problem. 
\begin{algorithm}\label{alg:obtain.minimizer.algo} PolyOpt\\
\textbf{Input:} $f$, $S(g,h)\ne \emptyset$, $\varepsilon >0$ and $k\in \N$. \\
\textbf{Output:} $\rho_k^i( \varepsilon)$ and $\bar x$.
\begin{enumerate}
\item Solve SDP \eqref{eq:primal2}  to obtain $\rho_k^i( \varepsilon)$.
\item Compute $\bar x$ in $S( {g\cup \{ {{\rho _k^i( \varepsilon)} - f}\},h })$ by using Algorithm \ref{alg:ASIM} and stop. 
\end{enumerate}
\end{algorithm}
\begin{proposition}
If POP $f^\star:=\inf \{ {f( x ):\, x \in S( {g,h} )}\}$ admits an optimal solution at $x^\star$, then for $k$ large enough, Algorithm \ref{alg:obtain.minimizer.algo} terminates  and $f^\star \le \rho_k^i( \varepsilon) \le f^\star + \varepsilon \theta(x^\star)^d$. Moreover, $\bar x \in S( {g,h} )$ and ${f^\star}\le f( {\bar x} )  \le {f^\star}+\varepsilon \theta {( {{x^\star}} )^d}$, generically. 
\end{proposition}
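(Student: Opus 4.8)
The plan is to observe that this proposition simply chains together results already proved in the paper, following the two steps of Algorithm~\ref{alg:obtain.minimizer.algo}: step~1 invokes the convergence of the hierarchy $(\rho_k^i(\varepsilon))_{k\in\N}$ to a neighbourhood of $f^\star$, while step~2 invokes the generic termination of the ASC procedure. So the real work is to produce a single threshold $K$ past which both steps behave as required, and then to read off the announced chain of inequalities.

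First I would handle step~1. Since by hypothesis $x^\star$ is an optimal solution of \eqref{eq:gen.POP}, the polynomial $f-f^\star$ is nonnegative on $S(g,h)$, and the appropriate convergence statement---Theorem~\ref{theo:constr.theo} when $S(g,h)\neq\R^n$ with unknown lower bound, Theorem~\ref{theo:uncconstr.theo} when $S(g,h)=\R^n$, and the $i=3$ variant otherwise---gives a $K_1\in\N$ such that $f^\star\le\rho_k^i(\varepsilon)\le f^\star+\varepsilon\,\theta(x^\star)^d$ for all $k\ge K_1$. In particular, for $k\ge K_1$ the output $\rho_k^i(\varepsilon)$ of SDP~\eqref{eq:primal2} is an honest upper bound of $f^\star$.

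Next I would handle step~2. For $k\ge K_1$ one has $f(x^\star)=f^\star\le\rho_k^i(\varepsilon)$, hence $x^\star\in S(g\cup\{\rho_k^i(\varepsilon)-f\},h)$; in particular this set is nonempty, so Algorithm~\ref{alg:ASIM} may legitimately be run on it. Proposition~\ref{prop:ASEM.terminate} then provides $K_2\in\N$ such that for all $k\ge K_2$ the call to Algorithm~\ref{alg:ASIM} terminates and, generically, returns $\bar x\in S(g\cup\{\rho_k^i(\varepsilon)-f\},h)$. Taking $K:=\max\{K_1,K_2\}$, for every $k\ge K$ Algorithm~\ref{alg:obtain.minimizer.algo} terminates; moreover $\bar x\in S(g,h)$ and $f(\bar x)\le\rho_k^i(\varepsilon)$, while $\bar x\in S(g,h)$ together with optimality of $x^\star$ forces $f(\bar x)\ge f^\star$. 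Chaining with step~1 yields $f^\star\le f(\bar x)\le\rho_k^i(\varepsilon)\le f^\star+\varepsilon\,\theta(x^\star)^d$, which is exactly the assertion (and is essentially Remark~\ref{re:algo.extr.sol}).

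The main point to be careful about is the bookkeeping of the relaxation order: the single parameter $k$ passed to Algorithm~\ref{alg:obtain.minimizer.algo} must be large enough both for the convergence estimate of step~1 and for all the hypotheses hidden in Theorem~\ref{theo:poly.sys.ASIM} behind step~2. In particular, step~2 internally re-solves \eqref{eq:primal2} with $f=\|x\|_2^2$ on $S(g\cup\{\rho_k^i(\varepsilon)-f\},h)$ in order to obtain a valid ball radius $L$, and this too only succeeds once $k$ is large; since there are only finitely many such thresholds, taking their maximum suffices. The qualifier ``generically'' is inherited verbatim from Proposition~\ref{prop:ASEM.terminate}: it refers to the generic finite convergence of the Moment-SOS hierarchies attached to the POPs \eqref{eq:radius.poly.2} (Nie's result, recalled in Remark~\ref{re:generically.ASEM}), which is what makes the flat-extension step in Theorem~\ref{theo:poly.sys.ASIM} applicable and hence produces the point $\bar x$.
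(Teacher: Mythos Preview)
Your proposal is correct and is precisely the intended argument: the paper does not give a standalone proof of this proposition, treating it instead as an immediate consequence of the preceding results (Theorems~\ref{theo:uncconstr.theo}/\ref{theo:constr.theo} for the bounds on $\rho_k^i(\varepsilon)$, Proposition~\ref{prop:ASEM.terminate} for generic termination of Algorithm~\ref{alg:ASIM}, and Remark~\ref{re:algo.extr.sol} for the final chain of inequalities). Your write-up simply spells out this chaining, including the useful observation that $x^\star\in S(g\cup\{\rho_k^i(\varepsilon)-f\},h)$ once $k\ge K_1$, so the input to Algorithm~\ref{alg:ASIM} is nonempty.
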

In practice, one performs Algorithm \ref{alg:obtain.minimizer.algo} several times by updating $k:=k+1$ until one obtains $\bar x$ in $S( {g\cup \{ {{\rho _k^i( \varepsilon)} - f}\},h })$. Obviously, one has $f^\star+\varepsilon \theta(x^\star)^d\ge \rho _k^i( \varepsilon)\ge f(\bar x)\ge f^\star$.
\section{Examples}
\label{sec:benchs}
In this section, we report results obtained after  solving some instances of POP \eqref{eq:gen.POP} with Algorithm \ref{alg:obtain.minimizer.algo}. 
As before, let us note $g = \{ {{g_1},\dots,{g_m}} \}$ and $h = \{ {h_1},\dots,{h_l} \}$ the sets of polynomials involved in the inequality constraints and the equality constraints, respectively. 
In particular, the resulting set $S(g,h)$ is unbounded for all examples.

The experiments are performed with both MATLAB R2018a/Yalmip and Julia 1.1.1/JuMP to model the semidefinite optimization problems and  Mosek 8.0 to solve these problems. 
The codes for Algorithm \ref{alg:ASIM} (PolySys) and Algorithm \ref{alg:obtain.minimizer.algo} (PolyOpt) can be downloaded from the link:  https://github.com/maihoanganh.
In these codes, we always set $a_0:=0_{\R^n}$ and $a_1,\dots,a_n$ as  the canonical basis of $\R^n$. 
We use a desktop computer with an Intel(R) Pentium(R) CPU N4200 @ 1.10GHz and 4.00 GB of RAM.    

The input data given in Table \ref{tab:exam.pop} include examples of unconstrained and constrained POPs. 
The corresponding output data, the exact results and timings are given in Table \ref{tab:test.pop}.
In these tables, the SOS hierarchy \eqref{eq:primal2} is solved by optimization models in Yalmip (Y) and JuMP (J). 
The symbol ``$-$" in a column entry indicates that the calculation did not finish in a couple of hours.
\begin{table}
    \caption{\small Examples of POPs.}
    \label{tab:exam.pop}
\scriptsize
\begin{center}
\begin{tabular}{|c| m{2.5cm}| m{8.1cm}|}
\hline 
Id&reference& input data\\ 
\hline 
1&Motzkin& $f=x_1^2x_2^2( {x_1^2 + x_2^2-1} )$\qquad $g=\emptyset$\qquad $h=\emptyset$\\
\hline 
2&Robinson& $f=x_1^6 + x_2^6 - x_1^4x_2^2 - x_1^2x_2^4 - x_1^4 - x_2^4 - x_1^2 - x_2^2 + 3x_1^2x_2^2$

$g=\emptyset$\qquad $h=\emptyset$\\
\hline 
3&Choi-Lam& $f=x_1^4x_2^2+x_2^4+x_1^2-3x_1^2x_2^2$\qquad $g=\emptyset$\qquad $h=\emptyset$\\
\hline 
4&Lax-Lax& $f=x_1x_2x_3-x_1(x_2-x_1)(x_3-x_1)(1-x_1)-x_2(x_1-x_2)(x_3-x_2)(1-x_2)-x_3(x_1-x_3)(x_2-x_3)(1-x_3)-(x_1-1)(x_2-1)(x_3-1)$

$g=\emptyset$\qquad $h=\emptyset$\\
\hline 
5&Delzell& $f=x_1^4x_2^2+x_2^4x_3^2+x_1^2x_3^4-3x_1^2x_2^2x_3^2+x_3^8$
\qquad $g=\emptyset$\qquad $h=\emptyset$\\
\hline 
6& Modified Motzkin &$f=( {x_1^2 + x_2^2-3} )x_1^2x_2^2 $
\qquad $g=\{x_1^2 + x_2^2 - 4\}$
\qquad $h=\emptyset$\\
\hline 
7&\cite[Example 4.3]{hu2016tensor}&$f=x_1^4+x_2^4+x_3^4-4x_1x_3^3$
\qquad $g=\{1-x_1^4+\frac{1}{2}x_2^4-x_3^4\}$
\qquad $h=\emptyset$\\
\hline 
8&\cite[Example 3.1]{jeyakumar2014polynomial}& $f=x_1^2+1$
\qquad $g=\{1- x_2^2, x_2^2-1/4\}$
\qquad $h=\emptyset$\\
\hline 
9&\cite[Example 4.5]{demmel2007representations}& $f=x_1^2+x_2^2 $
\qquad $g=\{x_1^2 -x_1x_2-1, x_1^2 +x_1x_2-1,  x_2^2 -1\}$
\qquad $h=\emptyset$\\
\hline 
10&\cite[Example 4.4]{demmel2007representations}& $f=- \frac{4}{3}x_1^2 + \frac{2}{3}x_2^2 - 2{x_1}{x_2}$
\qquad $g=\{ x_2^2-x_1^2, -x_1x_2 \}$
\qquad $h=\emptyset$\\
\hline 
11& \cite[\S 5.2]{jeyakumar2016solving}& $f=1 + \sum\limits_{j = 2}^{8} ( {{( {{x_j} - x_{j - 1}^2} )}^2} +( {1 - x_j^2} ) ) $

$g=\{x_1,\dots,x_{8}\}$
\qquad $h=\emptyset$\\
\hline
12& \cite[\S 5.3]{jeyakumar2016solving} &
$f=1 + \sum\limits_{l=1}^3 (( x_{2l} - x_{2l-1}^2 )^2 + ( 1 - x_{2l-1} )^2 + 90( x_{2l+2}^2 - x_{2l+1} )^2 + ( x_{2l+1} - 1 )^2 + 10( x_{2l} + x_{2l+2} - 2 )^2 + \frac{1}{10}( x_{2l} - x_{2l+2})^2)$

 $g=\{ x_1,\dots,x_8 \}$
\qquad $h=\emptyset$\\
\hline 
13&\cite[Example A.2]{greuet2014probabilistic}& $f=(x_1^2 + x_2^2 - 2)(x_1^2 + x_2^2)$
\qquad $g=\emptyset$
\qquad $h=\{ (x_1^2 + x_2^2 - 1)(x_1 -3)\}$\\
\hline 
14&\cite[Example A.5]{greuet2014probabilistic} &$f=x_1^6+x_2^6+x_3^6+3x_1^2x_2^2x_3^2-x_1^2(x_2^4+x_3^4)-x_2^2(x_3^4+x_1^4)-x_3^2(x_1^4+x_2^4)$

$g=\emptyset$
\qquad $h=\{x_1+x_2+x_3-1\}$\\
\hline 
15&\cite[Example A.6]{greuet2014probabilistic} &$f=x_1x_2x_3x_4-x_1(x_2-x_1)(x_3-x_1)(x_4-x_1)-x_2(x_1-x_2)(x_3-x2)(x_4-x_2)
    -x_3(x_2-x_3)(x_1-x_3)(x_4-x_3)
    -x_4(x_2-x_4)(x_3-x_4)(x_1-x_4)$

$g=\emptyset$
\qquad $h=\{x_1, x_2-x_3, x_3-x_4\}$\\
\hline 
16&\cite[Example A.4]{greuet2014probabilistic}& $f={( {{x_1} + 1} )^2} + x_2^2$
\qquad $g=\emptyset$ 
\qquad $h=\{ x_1^3 -x_2^2\}$\\
\hline 
17&\cite[Example A.8]{greuet2014probabilistic}& $f=\frac{1}{6}\sum\limits_{j = 1}^{5}(x_j^2+x_{j+5}^2)$
\qquad $g=\emptyset$ 

$h=\{x_6-1,x_{j+6}-x_{j+5}-\frac{1}{6}(x_{j+5}^2-x_j)\,:\,j=1,\dots,4\}$\\
\hline 
18&self made & $f=x_1^6+x_2^2$
\qquad $g=\emptyset$ 
\qquad $h=\{(x_1^2+x_2^2)(1-x_1x_2)^2\}$\\
\hline
19&\cite[Example 2]{gershwin2010kkt} & $f=x_1^2+x_2^2+x_3^2+x_4^2$
\qquad $g=\{\frac{1}{8}-x_4\}$ 

$h=\{x_1+x_2+x_3+x_4-1\}$\\
\hline
20&self made & $f=x_1^3-x_2^2$
\qquad $g=\{x_1,x_2\}$ 
\qquad $h=\{(x_1x_2+1)(x_1-x_2)^2\}$\\
\hline
21& self made& $f=x_1^4-3x_2$
\qquad $g=\{x_1,x_2\}$ 
\qquad $h=\{(x_2-x_1^2)(2x_1^2-x_2)\}$\\
\hline 
22&AM-GM inequality & $f=x_1+x_2+x_3$
\qquad $g=\{x_1,x_2,x_3\}$ 
\qquad $h=\{x_1x_2x_3-1\}$\\
\hline
23&\cite[USSR Olimpiad 1989]{slinko1997ussr} & $f={(x_1+x_2)(x_2+x_3)}$
\qquad $g=\{x_1,x_2,x_3\}$ 

$h=\{x_1x_2x_3(x_1+x_2+x_3)-1\}$\\
\hline
24&\cite[IMO 1990]{djukic2011imo} & $f=x_1(x_1+x_2+x_3)(x_1+x_3+x_4)(x_1+x_2+x_4)+x_2(x_1+x_2+x_3)(x_2+x_3+x_4)(x_1+x_2+x_4)+x_3(x_1+x_2+x_3)(x_1+x_3+x_4)(x_3+x_2+x_4)+x_4(x_4+x_2+x_3)(x_1+x_3+x_4)(x_1+x_2+x_4)-\frac{1}{3}(x_1+x_2+x_3)(x_1+x_3+x_4)(x_1+x_2+x_4)(x_2+x_3+x_4)$

$g=\{x_1,x_2,x_3,x_4\}$ 
\qquad $h=\{x_1x_2+x_2x_3+x_3x_4+x_4x_1-1\}$\\
\hline
25&\cite[IMO 2000]{djukic2011imo} & $f=-(x_1x_2-x_2+1)(x_2x_3-x_3+1)(x_3x_1-x_1+1)$

$g=\{x_1,x_2,x_3\}$ 
\qquad $h=\{x_1x_2x_3-1\}$\\
\hline
\end{tabular}    
\end{center}

\end{table}

\begin{table}
    \caption{\small Numerical experiments with $\varepsilon=10^{-5}$.}
    \label{tab:test.pop}
\scriptsize
\begin{center}
\begin{tabular}{|m{0.1cm}|m{0.7cm}|m{3.5cm}|m{0.1cm}|m{1.3cm}|m{2.4cm}|m{1.2cm}|}
\hline 
Id&$f^\star$ &$\{x^\star\in S(g,h):f(x^\star)=f^\star\}$& $k$&$\rho_k^i(\varepsilon)$& $\bar x$& time (s)\\ 
\hline 
1& $-\frac {1} {27}$&$\{\frac{1}{{\sqrt 3 }}( { \pm 1, \pm 1} )\}$ & $2$&-0.0369&
  (0.5713, 0.5713)& Y: 2.65

J: 2.58\\
\hline 
2& -1&$\{ {( { \pm 1, \pm 1} ),( {0, \pm 1} ),( { \pm 1,0} )} \}$
& $2$& -0.9999&
   (-0.0000, -0.9967)
& Y: 2.90

J: 2.91\\
\hline 
3& 0&$\{ {( { \pm 1, \pm 1} ),( {0, 0} )} \}$
& $1$& 0.0000&
   (-0.0000, -0.0000)
& Y: 3.26

J: 0.22\\
\hline 
4& 0&$\{ (1, 0, 0),(0, 1, 0), (0, 0, 1),$

$ (1, 1, 0), (0, 1, 1), (1, 0, 1) \}$
& $1$& 0.0000&
   (0.00, 0.00, 0.99)
& Y: 4.22

J: 0.30\\
\hline 
5& 0&$\{ {( {0, 0, 0} )} \}$
& $2$& 0.0000&
   (0.00, 0.00, 0.00)
& Y: 65.58

J: 31.84\\
\hline 
6& 0&$\{ {( {0,t} ),\ ( {t,0} )\,:\ | t | \ge 2} \}$& $1$&0.0062&
    (2.0000, 0.0000)
& Y: 4.47

J: 3.10\\
\hline 
7& {\tiny ${1-\sqrt[4]{27}}$}&$\{ {( {0,t,0} )\,:t\in\R} \}\cup \{ {t( {1,0,\sqrt[4]{3}} )\,:|t|\le {1}/{\sqrt[4]{4}}} \}$& $1$&-1.2793&
    (0.70, -0.00, 0.93)
& Y: 3.96

J: 1.94\\
\hline 
8&1&$\{ 0 \} \times (\left[ {\frac{1}{2},1} \right] \cup \left[ { - 1, - \frac{1}{2}} \right])$ & $1$&1.0000&
(0.0000, -0.5081)
& Y: 3.41

J: 2.58\\
\hline 
9&$\frac{{5 + \sqrt 5 }}{2}$&$\{ ( { \pm \frac{{1 + \sqrt 5 }}{2}, \pm 1} ) \}$ &$2$&3.6182&
(1.6181, 1.0000)
& Y: 4.72

J: 2.53\\
\hline 
10&0&$\{( 0,0 )\}$& $5$&
Y: -0.0005

J: -0.0002
&(0.0000, 0.0000)
& Y: 10.08

J: 3.82\\
\hline 
11&1&$\{( {1,\dots,1} )\}$& $0$&Y: $-$

J: 1.0072
&Y: $-$

J: (0.95, 0.96, 0.96, 0.96, 0.97, 0.97, 0.97, 0.97)
& Y: $-$

$\text{J: 2265.71}$\\
\hline
12&1& $\{( {1,\dots,1} )\}$ & $0$&Y: $-$

J: 1.0072
&Y: $-$

J: (0.98, 0.99, 0.99, 0.99, 0.99, 0.99, 0.96, 0.98)
& Y: $-$

$\text{J: 2642.23}$\\
\hline 
13&-1&$\{ {( { t,\pm \sqrt {1 - {t^2}} } ):\, t \in [ { - 1,1} ]} \}$& $2$&
-0.9999
& Y: (1.0000, 0.0000)
   
   J: (0.3663, -0.9304)
& Y: 8.46

J: 3.99\\
\hline 
14&0&$\{\frac{1}{3}( {1,1,1} )\}$&$2$&0.0000&
(0.33, 0.33, 0.33)
& Y: 39.25

J: 6.92\\
\hline 
15&0&$\{( {0,0,0,0})\}$&$1$&0.0000&
  $(0.00,\dots,0.00)$
& Y: 11.06

J: 4.19\\
\hline 
16&1&$\{( 0,0 )\}$&5

10

15

20
&
0.9771

0.9802

0.9857

0.9918
&J: $-$

J: $-$

J: $-$

J: $-$
& J: 1.83

J: 4.49

J: 15.38

J: 154.60
\\
\hline 
17& & &$1$&Y: 1.3216

J: 1.4883&Y: $-$

 J: (1.19, 0.78, 0.46, 0.20, 0.00, 1.0, 0.96, 0.99, 1.08, 1.24)
& $\text{Y: 1846.74}$

J: 639.97 \\
\hline 
18&0 &$\{(0,0)\}$ &$1$&0.0000&(0.0000, 0.0000)
& Y: 2.55

J: 0.34 \\
\hline 
19&$\frac{13}{48}$&$\{( {\frac{7}{24},\frac{7}{24},\frac{7}{24},\frac{1}{8}})\}$&$0$&0.2708&
  $(0.29,0.29,0.29,0.12)$
& Y: 24.44

J: 17.02\\
\hline 
20&$-\frac{4}{27}$&$\{( \frac{2}{ 3},\frac{2}{ 3})\}$&$1$&-0.1487&
 (0.6518, 0.6526)
& Y: 20.52

J: 2.03\\
\hline 
21&$-9$&$\{(\sqrt 3,6)\}$&1&Y: -8.0171

J: -8.5578&
  Y: (1.4172, 4.0172)
  
  J: (1.5280, 4.6701)
& Y: 7.60

J: 2.52\\
\hline 
22&3&$\{(1,1,1)\}$&2&3.0000&(1.00, 1.00, 1.00) 
& Y: 8.57

J: 3.89\\
\hline 
23&2&$\{(1, \sqrt 2 - 1, 1)\}$&5&Y: $-$

J: 2.0000 &Y: $-$

J: (0.99, 0.41, 0.99) 
& Y: $-$

J: 201.29\\
\hline 
24& $\frac{81}{16}$&$\{\frac{1}{2}(1, 1, 1, 1)\}$& 0& 5.0625&(0.49, 0.49, 0.49, 0.49) 
& Y: 11.37

J: 2.18\\
\hline 
25& -1&$\{(1, 1, 1)\}$& 1&-0.9974 &(1.0, 1.0, 1.0) 
& Y: 35.33

J: 7.45\\
\hline 
\end{tabular}    
\end{center}

\end{table}

Id 1-5 are unconstrained POPs. 
Id 6-12 are POPs with inequality constrains, Id 13-18 are POPs with equality constraints and Id 19-25 are POPs with both inequality and equality constraints. 
Id 8, 11 and 12 correspond to examples from Jeyakumar et al. \cite{jeyakumar2014polynomial, jeyakumar2016solving}. Id 9 and 10 are selected from Demmel et al. \cite{demmel2007representations}. Id 13-17 come from Greuet et al. \cite{greuet2014probabilistic}. Id 23, 24 and 25 are POPs constructed from some inequalities issued from Mathematics competitions mentioned in \cite{slinko1997ussr, djukic2011imo}, yielding non-compact POPs with known optimal values and optimizers.

Even though the sets of minimizers associated to Id 6, 7, 8 and 13 are positive dimensional, we can still extract an approximate one of them by using our ASC algorithm. Note that ASC computes a real point $\bar x$ in $S( {g\cup \{ {{\rho _k^i( \varepsilon)} - f}\},h })$ which is an outer approximation of $\{x^\star\in S(g,h): f(x^\star)=f^\star\}$ for $k$ sufficiently large.

In Table \ref{tab:test.pop}, Algorithm \ref{alg:obtain.minimizer.algo} terminates at some order $k\le 5$ for all POPs except Id 16. Note that for Id 16, the global minimum does not satisfy the KKT conditions, and the convergence rate of $(\rho _k^i( \varepsilon ))_{k\in\N}$ is very poor when $\varepsilon \le 10^{-5}$. 
We  overcome this issue by fixing $k$, multiplying $\varepsilon$ by 10, and solving again the relaxations. 
The computational cost that we must pay here is due to the largest gap $\varepsilon\, \theta {( {{x^\star}} )^d}$ between  $\rho_k^i( \varepsilon)$ and $f^\star$. 
This behavior is illustrated in Table \ref{tab:fixk.updateeps}.
\begin{table}
    \caption{\small Numerical experiments for Id 16 with various values of $\varepsilon$.}
    \label{tab:fixk.updateeps}
\scriptsize
\begin{center}
\begin{tabular}{|m{0.6cm}|m{1.3cm}|m{2cm}|m{1.2cm}|}
\hline 
$\varepsilon$&$\rho_5^i(\varepsilon)$& $\bar x$& time (s)\\ 
\hline 
$10^{-4}$&Y: 0.9492

J: 0.9778&J: $-$

Y: $-$
&Y: 4.47

J: 2.02\\
 \hline
$10^{-3}$&Y: 0.9528

J: 0.9783&J: $-$

Y: $-$
&Y:4.62

J: 1.59\\
 \hline
 $10^{-2}$&Y: 0.9550

J: 0.9884&J: $-$

Y: $-$
&Y: 4.45

J: 1.29\\
 \hline
$10^{-1}$&Y: 1.0479

J: 1.0774&(0.0000, 0.0000)&Y: 17.64

J: 3.76\\
 \hline
\end{tabular}    
\end{center}
\end{table}

In Id 18, even if the ideal $\langle h \rangle $ is not radical and $V( h )$ is not equidimensional (the assumptions required to apply the framework in \cite{greuet2014probabilistic} are not guaranteed) our ASC method can still extract one solution of the problem.

For Id 21, we can improve the quality of the approximation $\rho_k^i( \varepsilon)$ of the optimal value $f^\star$ by fixing $k=1$, dividing $\varepsilon$ by 10, and solving again the relaxations. 
This is illustrated in Table \ref{tab:fixk.updateeps2}.
\begin{table}
    \caption{\small Numerical experiments for Id 21 with various values of $\varepsilon$.}
    \label{tab:fixk.updateeps2}
\scriptsize
\begin{center}
\begin{tabular}{|m{0.6cm}|m{1.3cm}|m{2.4cm}|m{1.2cm}|}
\hline 
$\varepsilon$ &$\rho_1^i(\varepsilon)$& $\bar x$& time (s)\\ 
\hline 
$10^{-6}$&Y: -8.2078

J: -8.9392&
  Y: $(1.4525,4.2199)$
  
  J: $(1.6593, 5.5069)$
& Y: 8.46

J: 3.10\\
\hline 
$10^{-7}$&Y: -8.4889

J: -8.9935&
  Y: $(1.5116,4.5701)$
  
  J: $(1.7086, 5.8387)$
& Y: 8.34

J: 2.61\\
\hline 
$10^{-8}$&Y: -8.4915

J: -8.9968&
  Y: $(1.5122,4.5739)$
  
  J: $(1.7158, 5.8873)$
& Y: 8.37

J: 2.70\\
\hline 
$10^{-9}$&Y: -7.9335

J: -8.9949&
  Y: $(1.4026,3.9346)$
  
  J: $(1.7113, 5.8572)$
& Y: 8.30

J: 2.54\\
\hline 
\end{tabular}    
\end{center}
\end{table}

We emphasize that we can  customize the $\varepsilon$ parameter for different purposes.  On the one hand, one increases $\varepsilon$ to improve the convergence speed of the sequence $(\rho^i_k(\varepsilon))_{k\in\N}$ to the neighborhood $[f^\star,f^\star+\varepsilon \theta(x^\star)^d]$ of $f^\star$ (see Table \ref{tab:fixk.updateeps}). On the other hand, one decreases $\varepsilon$ to improve the accuracy of the approximate optimal value $\rho^i_k(\varepsilon)$ and the approximate optimal solution $\bar x$ (see Table \ref{tab:fixk.updateeps2}).

Our numerical benchmarks also show that modeling in JuMP is faster and provides more accurate outputs than modeling in Yalmip. 
In particular, the JuMP implementation is the only one which provides solutions for Id 11, 12, 17 and 23.

Let us now denote by $k_\varepsilon$ the smallest nonnegative integer such that $\rho_{k_\varepsilon}^i(\varepsilon)\ge f^\star$, for each $\varepsilon>0$. 
The graph of the function $\varepsilon^{-1}\mapsto k_\varepsilon$ on $(0,100]$ for Id 9 and Id 16 is illustrated in Figure \ref{fig:plot.complex}. 
Here Id 9 (resp. Id 16) is an example of POP such that the global minimums satisfy the KKT condition (resp. do not satisfy the KKT condition). 
We can experimentally compare the complexity of Algorithm \ref{alg:obtain.minimizer.algo} in both cases. 
For Id 9, the function seems to increase as slowly as a constant function, which is in deep contrast with Id 16, where the function increases more quickly and seems to have a step-wise linear growth. 
Theorem \ref{theo:constr.theo} states that $k_\varepsilon$ has an upper bound which is exponential in $\varepsilon^{-1}$, i.e., $k_\varepsilon\le C \exp \left( O(\varepsilon^{-1})^C \right) -d$ as $\varepsilon^{-1}  \to  \infty$ if $0_{\R^n}\in S(g,h)$.  
This is an open question to guarantee that $k_\varepsilon\le O(\varepsilon^{-N})$  as $\varepsilon \downarrow 0$ for some $N>0$ in the constrained case. 
Another open questions are whether and how the KKT condition affects the convergence rate of $(\rho^i_k(\varepsilon))_{k}$.

\begin{center}
    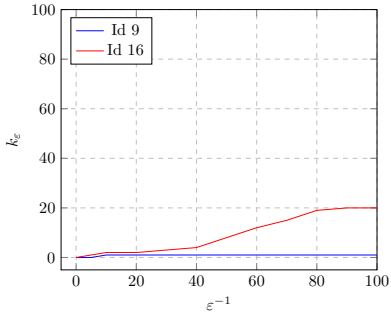
\begin{figure}[htp]
    \begin{center}
\begin{tikzpicture}[scale=\textwidth/20cm,samples=200]
\begin{axis}[
    xlabel={$\varepsilon^{-1}$},
    ylabel={$k_\varepsilon$},
    xmin=-5, xmax=100,
    ymin=-5, ymax=100,
    xtick={0,20,40,60,80,100},
    ytick={0,20,40,60,80,100},
    legend pos=north west,
    ymajorgrids=true,
    xmajorgrids=true,
    grid style=dashed,
]
 
\addplot[
    color=blue
    ]
    coordinates {
    (0,0)(5,0)(10,1)(20,1)(30,1)(40,1)(50,1)(60,1)(70,1)(80,1)(90,1)(100,1)
    };
    
\addplot[
    color=red,
    ]
    coordinates {
    (0,0)(5,1)(10,2)(20,2)(30,3)(40,4)(50,8)(60,12)(70,15)(80,19)(90,20)(100,20)
    };
    
    \legend{Id 9, Id 16}
 
\end{axis}
\end{tikzpicture}
\end{center}
    \caption{Plot of the complexity.}
    \label{fig:plot.complex}
  \end{figure}
\end{center}

\section{Conclusion}
In this paper, we have established new proofs for two representations of globally nonnegative polynomials and polynomials nonnegative on semialgebraic sets based on the homogeneous representations in \cite{reznick1995uniform} and \cite{putinar1999solving}. 
Then we rely on these representations to convert them into a practical numerical scheme for approximating the global minimum.
We provide  converging hierarchies of semidefinite relaxations for unconstrained and constrained polynomial optimization problems.
We have also introduced a method based on adding spherical constraints (ASC) to solve systems of polynomial equalities and inequalities, and to obtain global solutions of polynomial optimization problems as well. 

In view of the practical efficiency of ASC, a topic of further investigation is to provide a more detailed comparison with other methods for solving polynomial systems. 
Another direction of research is to derive a sparse variant of Putinar-Vasilescu's Positivstellensatz in order to improve the scalability of our optimization framework. 
For this, we cannot directly rely on existing sparse polynomial optimization techniques by~\cite{waki2008algorithm} since the perturbation term is not sparse.
One possible workaround would be to first  derive a sparse variant of Reznick's Positivstellensatz, involving sparse uniform denominators.


\paragraph{\textbf{Acknowledgements}.} This research was started at LAAS-CNRS in the MAC team, where the first author did his Master's  internship. 
We would like to thank all members of the MAC team for their support. 
We also thank all members from the review board of the Master's defense of the first author in Universit\'e de Limoges for their useful comments on this paper.
The second author was supported by the European Research Council (ERC) under the European's Union Horizon 2020 research and innovation program (grant agreement 666981 TAMING).
The third author was supported by the FMJH Program PGMO (EPICS project) and  EDF, Thales, Orange et Criteo, as well as from the Tremplin ERC Stg Grant ANR-18-ERC2-0004-01 (T-COPS project).
\appendix
\section{Appendix}
\label{sec:Appendix}
\subsection{Proof of Theorem \ref{theo:representation.nonnegative.poly}} 
\label{proof:representation.nonnegative.poly}
\begin{proof}
Let $\tilde f = x_{n + 1}^{2d} f ( x/x_{n + 1} )$ be the degree $2d$ homogenization of $f$. Since $f$ is globally nonnegative, $\tilde f$ is nonnegative on $\R^{n+1}$. Let $\varepsilon>0$ be fixed. We claim that 
\[\tilde f + \varepsilon{\| {( {x,{x_{n + 1}}} )} \|_2^{2d}} \in \R[ {x,{x_{n + 1}}}]\]
is positive definite, i.e., is homogeneous and positive on ${\R^{n + 1}}\backslash \{ 0_{{\R^{n+1}}} \}$. 
Since  
\[{\| {( {x,{x_{n + 1}}} )} \|_2^{2 d}} = {( {x_1^2 + \dots + x_n^2 + x_{n + 1}^2} )^{d}}\,,\]
the polynomial ${\| {( {x,{x_{n + 1}}} )} \|_2^{2 d}}$ is homogeneous of degree $2 d$ on $\R^{n+1}$. 
From this and since $\tilde f$ is homogeneous of degree $2 d$, $\tilde f + \varepsilon{\| {( {x,{x_{n + 1}}} )} \|_2^{2 d}}$ is homogeneous of degree $2 d$. 
For every $( {x,{x_{n + 1}}} )\in {\R^{n + 1}}\backslash \{ 0_{{\R^{n+1}}} \}$, $\| {( {x,{x_{n + 1}}} )} \|_2 > 0$. 
From this and since $\tilde f$ is nonnegative on $\R^{n+1}$, 
\[\tilde f( {x,{x_{n + 1}}} ) + \varepsilon {\| {( {x,{x_{n + 1}}} )} \|_2^{2 d}} > 0 \,,\]
for all $( {x,{x_{n + 1}}})\in{\R^{n + 1}}\backslash \{ 0_{{\R^{n+1}}} \}$.
In addition, it is not hard to show that 
\[\inf\{\tilde f(x,{x_{n + 1}})+ \varepsilon {\| {( {x,{x_{n + 1}}} )} \|_2^{2 d}}\,:(x,{x_{n + 1}})\in \mathbb S^{n}\}\ge \varepsilon\]
and
\[\begin{array}{rl}
&\sup\{\tilde f(x,{x_{n + 1}})+ \varepsilon {\| {( {x,{x_{n + 1}}} )} \|_2^{2 d}}\,:(x,{x_{n + 1}})\in \mathbb S^{n}\}\\
\le &\sup\{\tilde f(x,{x_{n + 1}})\,:(x,{x_{n + 1}})\in \mathbb S^{n}\}+\varepsilon\\
\le &\| \tilde f\|_{1}+\varepsilon \,.
\end{array}\]
Thus, $\delta(\tilde f +\|.\|^{2d}_2)\le (\| \tilde f\|_{1}+\varepsilon)/\varepsilon=(\|  f\|_{1}+\varepsilon)/\varepsilon$.
From this and by applying Lemma \ref{lem:homogeneous.sos} with $p=\tilde f+ \varepsilon {\| {( {x,{x_{n + 1}}} )} \|_2^{2 d}}$, for $k_\varepsilon\in \N$ and 
\[k_\varepsilon\ge \frac{{2(n+1)d(2d - 1)}}{{(4\log 2)}}(\varepsilon^{-1}\|  f\|_{1}+1) - \frac{{n + 1+2d}}{2}\,,\]
there exists ${{\tilde \sigma }_\varepsilon } \in \Sigma [ {x,{x_{n + 1}}} ]_{k_{\varepsilon} + d}$ such that
\[{\| {( {x,{x_{n + 1}}} )} \|_2^{2{k_\varepsilon }}}( {\tilde f + \varepsilon {{\| ( {x,{x_{n + 1}}} )\|}_2^{2 d}}}) = {{\tilde \sigma }_\varepsilon }\,.\]
By replacing $x_{n+1}$ by $1$, one has
\[{\theta^{{k_\varepsilon }}}( {f + \varepsilon {{\theta}^{d}}} ) = {{\tilde \sigma }_\varepsilon }( {x,1} )\,.\]
Let us note ${\sigma _\varepsilon }( x ) := {{\tilde \sigma }_\varepsilon }( {x,1})$, for every $x\in \R^n$. 
Since ${{\tilde \sigma }_\varepsilon } \in \Sigma [ {x,{x_{n + 1}}} ]_{k_{\varepsilon} + d}$, it follows that ${\sigma _\varepsilon } \in \Sigma [ x ]_{k_{\varepsilon} + d}$, yielding the desired result.
\end{proof}

\subsection{Proof of Theorem \ref{theo:Representation.nonnegative2}}
\label{proof:Representation.nonnegative2}
Jacobi \cite[Theorem 7]{jacobi2001representation} proves another result of Putinar and Vasilescu \cite[Theorem 4.2]{putinar1999solving}, which states that if $f,g_1,\dots,g_m$ are polynomials of even degree and $\tilde f>0$ on $S(\{\tilde g_1,\dots,\tilde g_m\})\backslash \{0\}$, then $\theta^kf\in Q(g)$ for $k$ large enough, where $\tilde h$ is the homogenization of given polynomial $h$. 
The idea of Jacobi is to apply Putinar's Positivstellensatz for the fact that $\tilde f>0$ on the intersection of $S(\{\tilde g_1,\dots,\tilde g_m\})$ with the unit sphere. 
Our proof for Theorem \ref{theo:Representation.nonnegative2} replaces $\tilde f$ here by the perturbation of $\tilde f$ and replaces the unit sphere by a sphere with an arbitrary radius $L^{1/2}$.
 This changing will support the computation of the complexity in Proposition \ref {prop:Complexity.Putinar-Vasilescu} later on.
\begin{proof}
1. Let us prove the conclusion under condition (i).
For every $h \in \R[ x ]$, we define by $\hat h$ the degree $2d_1 ( h )$ homogenization of $h$, i.e.,
\begin{equation}
\label{eq:homogenous.formula2}
\hat h( {x,{x_{n + 1}}} ) = x_{n + 1}^{2d_1 ( h )}h( {\displaystyle{x}/{{{x_{n + 1}}}}} )\,.
\end{equation}
where ${d_1}( h ): = {1 + \lfloor {{{\deg ( h )}}/{2}} \rfloor }$.
Let $\varepsilon  > 0$ be fixed. 
Let $\tilde f$ be the degree $2d$ homogenization of $f$. 
Then $\tilde f + \varepsilon {\| {( {x,{x_{n + 1}}} )} \|_2^{2d}}$ is a homogeneous polynomial of degree $2d$. 
Let $\hat g := \{ {{\hat g_1},\dots,{\hat g_m}} \}$ and $L>0$. We will show that 
\begin{equation}\label{eq:positive.semialgebraic.homogeneous2}
\tilde f + \varepsilon {\| {( {x,{x_{n + 1}}} )}\|_2^{2d}} \ge \varepsilon L^{d}\text{ on }S( \hat g, \{L-\| {( {x,{x_{n + 1}}} )}\|_2^2\})\,.
\end{equation}
Let $( {y,{y_{n + 1}}}) \in S( \hat g, \{L-\| ( x,x_{n + 1} \|_2^2\})$ be fixed. 
By \eqref{eq:homogenous.formula2}, one has
\begin{equation}\label{eq:inequality.homogenous2}
y_{n + 1}^{2d_1 ( g_j )}{g_j}( {\displaystyle{y}/{{{y_{n + 1}}}}} )={{\hat g}_j}( {y,{y_{n + 1}}} ) \ge 0,\,j = 1,\dots,m\,.
\end{equation}
We consider the following two cases:
\begin{itemize}
\item Case 1: ${y_{n + 1}} \ne 0$. 
For $j=1,\dots,m$, by \eqref{eq:inequality.homogenous2} and since $y_{n + 1}^{2d_1 \left( g_j \right)}>0$, ${g_j}( {\displaystyle{y}/{{{y_{n + 1}}}}} ) \ge 0$.
It implies that $\displaystyle{y}/{{{y_{n + 1}}}} \in S\left( g\right)$. 
By assumption, $f( \displaystyle{y}/{{{y_{n + 1}}}}) \ge 0$. 
From this, $\tilde f( {y,{y_{n + 1}}}) = y_{n + 1}^{2d}f( {{y}/{{{y_{n + 1}}}}} ) \ge 0$.
 From this and since $\|( {y,{y_{n + 1}}} )\|_2^2=L$, $\tilde f( {y,{y_{n + 1}}}) + \varepsilon {\| {( {y,{y_{n + 1}}} )} \|_2^{2d}} \geq \varepsilon L^{d}$.
\item Case 2: ${y_{n + 1}} = 0$. By definition of $d_1(f)$, $x_{n+1}$ divides $\tilde f$ so $\tilde f( {y,{y_{n + 1}}}) = 0$. 
From this and since $\|( {y,{y_{n + 1}}})\|_2^2=L$, 
\[\tilde f( {y,{y_{n + 1}}}) + \varepsilon {\| {( {y,{y_{n + 1}}} )} \|_2^{2d}} = \varepsilon L^{d}\,.\]
\end{itemize}
Thus, \eqref{eq:positive.semialgebraic.homogeneous2} holds. 
It is not hard to show that $S( \hat g \cup \{L-\| {( {x,{x_{n + 1}}} )}\|_2^2\})$ satisfies the Archimedean condition. 
From this and by applying Theorem \ref{theo:Putinar's.Positivstellensatz2} (i), 
\[\tilde f + \varepsilon {\| {( {x,{x_{n + 1}}} )} \|^{2d}} \in Q( \hat g,{\{ { L - {{\| {( {x,{x_{n + 1}}} )} \|}^2_2}} \}} )\,.\]
Then there exist $\psi  _j  \in \Sigma [ {x,{x_{n + 1}}} ],\ j = 0,1,\dots,m$ and $\varphi  \in \R [ {x,{x_{n + 1}}} ]$ such that
\[\tilde f + \varepsilon {\| {( {x,{x_{n + 1}}} )} \|_2^{2d}} = \psi _0  + \sum\limits_{j = 1}^m {\psi _j{{\hat g}_j}}   + ( {L - {{\| {( {x,{x_{n + 1}}} )} \|}_2^2}} )\varphi \,.\]
Let $( {z,{z_{n + 1}}}) \in {\R^{n + 1}}\backslash \{ 0\}$. By replacing ${( {x,{x_{n + 1}}} )}$ in the last equality by $L^{1/2}\frac{{( {z,{z_{n + 1}}} )}}{{\| {( {z,{z_{n + 1}}} )} \|_2}}$ and the fact that ${\tilde f + \varepsilon {\| {( {x,{x_{n + 1}}})} \|_2^{2d}}}, \ \hat g_1,\dots,\hat g_m$ are homogeneous polynomials of degree $2d,\ 2d_1 \left( g_1 \right),\dots,2d_1 \left( g_m\right)$ respectively, one has
\[\begin{array}{rl}
&\left( {\tilde f( {z,{z_{n + 1}}} ) + \varepsilon {{\| {( {z,{z_{n + 1}}} )} \|}_2^{2d}}} \right)L^d{\| {( {z,{z_{n + 1}}} )} \|_2^{ - 2d}}\\
 = & \psi _0 \left( L^{1/2}\frac{( z,z_{n + 1})}{\| ( z,z_{n + 1} ) \|_2} \right)\\
& + \sum\limits_{j = 1}^m {\psi _j \left( L^{1/2}\frac{( z,z_{n + 1} )}{\| ( z,z_{n + 1}) \|} \right)\hat g_j( z,z_{n + 1} )L^{d_1(g_j)}\| ( z,z_{n + 1} ) \|_2^{ - 2d_1 ( g_j )}} \,.
\end{array}\]
Set 
\[K: = \max \{ {2d,\deg (\psi _0  ),2d_1( g_1) + \deg ( \psi _1),\dots,2d_1( g_m) + \deg ( \psi _m),2+2\lceil\deg(\varphi)/2\rceil}\}\,.\]
Obviously, $K$ is even.
After multiplying the two sides of the last equality with $\| ( z,z_{n + 1}) \|^K$, one has
\[\begin{array}{rl}
&\left( {\tilde f( {z,{z_{n + 1}}} ) + \varepsilon {{\| {( {z,{z_{n + 1}}} )} \|}_2^{2d}}} \right)L^d{\| {( {z,{z_{n + 1}}} )} \|_2^{K - 2d}}\\
 =&   \bar \psi _0( z,z_{n + 1} )  + \sum\limits_{j = 1}^m {L^{d_1(g_j)} \bar \psi _j (z,z_{n + 1} ) \hat g_j( z,z_{n + 1} )} \,,
\end{array}\]
where 
\begin{align*}\bar \psi _0 &:=\psi _0 \left( L^{1/2}\frac{( x,x_{n + 1})}{\| ( x,x_{n + 1} ) \|_2} \right)\| ( x,x_{n + 1} ) \|_2^{ K}\text\,,\\
\bar \psi_j &:=\psi _j \left( L^{1/2}\frac{( x,x_{n + 1} )}{\| ( x,x_{n + 1}) \|_2} \right)\| ( x,x_{n + 1} ) \|_2^{ K- 2d_1 ( g_j )}\,,\,j=1,\dots,m\,.
\end{align*}
Since $( {z,{z_{n + 1}}}) \in {\R^{n + 1}}\backslash \{ 0\}$ is arbitrary,
\begin{equation}\label{eq:equal.homoge} 
\left( {\tilde f + \varepsilon {{\| {( {x,{x_{n + 1}}} )} \|}_2^{2d}}} \right)L^d{\| {( {x,{x_{n + 1}}} )} \|_2^{K - 2d}}=  \bar \psi _0 + \sum\limits_{j = 1}^m {L^{d_1(g_j)}\bar \psi _j  \hat g_j} \,.
\end{equation}
Let $j\in\{0,\dots,m\}$ be fixed and set $g_0:=1$ and $d_1(g_0):=0$. We will show that 
\begin{equation}\label{eq:form.psi}
\bar \psi_j=s_j+\xi_j \|(x,x_{n+1})\|_2\,,
\end{equation} 
for some $s_j\in \Sigma[x,x_{n+1}]_{K/2-d_1(g_j)}$ and $\xi_j\in \R[x,x_{n+1}]$. 
By definition of $K$ ($K- 2d_1 ( g_j )\ge \deg(\psi_j)$, $j=1,\dots,m$) and since $\psi_j$ is SOS, $\bar \psi_j=\sum\limits_{t = 1}^{l} {\phi_{t}^2}$ with $\phi_{t}:=h_t+p_t\|(x,x_{n+1})\|_2$ for some $h_t\in\R[x,x_n]_{K/2-d_1(g_j)}$ and $p_t\in\R[x,x_n]_{K/2-d_1(g_j)-1}$. Then 
\[\bar \psi_j=\sum\limits_{t = 1}^{l} {(h_t^2+p_t^2\|(x,x_{n+1})\|_2^2)}+2\|(x,x_{n+1})\|_2\sum\limits_{t = 1}^{l} {h_tp_t}\,.\]
By setting $s_j:=\sum\limits_{t = 1}^{l} {(h_t^2+p_t^2\|(x,x_{n+1})\|_2^2)}$ and $\xi_j:=2\sum\limits_{t = 1}^{l} {h_tp_t}$, \eqref{eq:form.psi} follows. By \eqref{eq:equal.homoge} and \eqref{eq:form.psi},
\[\begin{array}{rl}
&\left( {\tilde f + \varepsilon {{\| {( {x,{x_{n + 1}}} )} \|}_2^{2d}}} \right)L^d{\| {( {x,{x_{n + 1}}} )} \|_2^{K - 2d}}\\
=&  s_0 + \sum\limits_{j = 1}^m {L^{d_1(g_j)}s_j  \hat g_j}+\| ( x,x_{n + 1} ) \|_2\sum\limits_{j = 1}^m {L^{d_1(g_j)}\xi_j  \hat g_j} \,.
\end{array}\]
Note that $\| {( {x,{x_{n + 1}}} )} \|_2$ is not polynomial. From the last equality and since the right hand side is polynomial, the left hand side must be a polynomial, so $\sum\limits_{j = 1}^m {L^{d_1(g_j)}\xi_j  \hat g_j}=0$. Thus,
\[\left( {\tilde f + \varepsilon {{\| {( {x,{x_{n + 1}}} )} \|}_2^{2d}}} \right)L^d{\| {( {x,{x_{n + 1}}} )} \|_2^{K - 2d}}=  s_0 + \sum\limits_{j = 1}^m {L^{d_1(g_j)}s_j  \hat g_j} \,.\]
By setting $k_\varepsilon:=K/2-d$, ${s_0} \in \Sigma {[ x,x_{n+1}  ]_{{k_\varepsilon } + d}}$, ${s _j} \in \Sigma {[ x,x_{n+1} ]_{{k_\varepsilon } + d - d_1( {{g_j}})}},\,j = 1,\dots,m$ and 
$L^d{\| {( {x,{x_{n + 1}}})} \|_2^{2{k_\varepsilon }}}( {\tilde f + \varepsilon {{\| {( {x,{x_{n + 1}}} )} \|}_2^{2d}}} ) = {s _0} + \sum\limits_{j = 1}^m {L^{d_1(g_j)}{s_j}{{\hat g}_j}} $.
By replacing $x_{n+1}$ by $1$,
$
L^d{\theta^{{k_\varepsilon }}}( {f + \varepsilon {\theta^{d}}} ) = {s _0}( {x,1} ) + \sum\limits_{j = 1}^m {L^{d_1(g_j)}{s_j}( {x,1} ){g_j}} $.
Since ${s_0} \in \Sigma {[ x,x_{n+1}]_{{k_\varepsilon } + d}}$, ${s_j} \in \Sigma {[ x,x_{n+1} ]_{{k_\varepsilon } + d - d_1( {{g_j}} )}},\,j = 1,\dots,m$, ${s _0}( {x,1} ) \in \Sigma {[ x ]_{{k_\varepsilon } + d}}$, ${s _j}( {x,1}) \in \Sigma {[ x]_{{k_\varepsilon } + d - d_1( {{g_j}})}},\,j = 1,\dots,m$. 
Note that $d_1(g_j)\ge d_2(g_j)=u_j$, $j=1,\dots,m$.
Hence,  ${\theta ^{{k_\varepsilon }}}( {f + \varepsilon {\theta ^{d}}} ) \in  {Q_{{k_\varepsilon } + d}}( g )$ by the definition of truncated quadratic module.

\smallskip

\noindent 2. Let us show the conclusion under condition (ii).
We do a similar process as part 1 (under condition (i)). 
The difference is that $\hat h$ here is defined as the degree $2d_2 ( h )$ (instead of $2d_1 ( h )$) homogenization of $h \in \R[ x ]$ and the proof for \eqref{eq:positive.semialgebraic.homogeneous2}. To show \eqref{eq:positive.semialgebraic.homogeneous2} in Case 2: $y_{n+1}=0$ here, we rely on the constraint $g_m=f+\lambda$ for some $\lambda \ge 0$ (instead of $d \ge d_1(f)$ and that $x_{n+1}$ divides $\tilde f$). More explicitly, we assume by contradiction that 
\[\tilde f({y,{y_{n + 1}}}) + \varepsilon {\| {( {y,{y_{n + 1}}} )}\|_2^{2d}} <\varepsilon L^{d}.\]
 From this and since $0 \le {\hat g_m}(y,{y_{n + 1}}) = \hat f(y,0)$,
\[\begin{array}{rl}
\varepsilon \|({y,{y_{n + 1}}})\|_2^{2d} &\le 0^{2d-2d_2(f)}\hat f(y,0) + \varepsilon \|({y,{y_{n + 1}}})\|_2^{2d}\\
&= y_{n+1}^{2d-2d_2(f)}\hat f(y,y_{n+1}) + \varepsilon \|({y,{y_{n + 1}}})\|_2^{2d} \\
&= \tilde f({y,{y_{n + 1}}}) + \varepsilon {\| {( {y,{y_{n + 1}}} )}\|_2^{2d}} <\varepsilon L^{d}\,.
\end{array}\]
It follows that $\|({y,{y_{n + 1}}})\|_2^2<L=\|({y,{y_{n + 1}}})\|_2^2$. 
It is impossible. 
Thus, $\tilde f({y,{y_{n + 1}}}) + \varepsilon {\| {( {y,{y_{n + 1}}} )}\|_2^{2d}} \ge \varepsilon L^{d}$.
\end{proof}
\subsection{Proof of Proposition~\ref{prop:Complexity.Putinar-Vasilescu}}
\label{proof:Complexity.Putinar-Vasilescu}
\begin{proof}
We keep all the notation from the proof of Theorem \ref{theo:Representation.nonnegative2}. Without loss of generality, let us assume that  $L=1/4$. 
By \eqref{eq:positive.semialgebraic.homogeneous2}, $S( \hat g,{\{ { L - {{\| {( {x,{x_{n + 1}}} )} \|}^2_2}} \}} )\subset (-1,1)^{n+1}$ and
\begin{equation}\label{eq:first.ineq}
\min\{\tilde f + \varepsilon {\| {( {x,{x_{n + 1}}} )} \|_2^{2d}}\,:( {x,{x_{n + 1}}} )\in S( \hat g,{\{ { L - {{\| {( {x,{x_{n + 1}}} )} \|}^2_2}} \}} )\}\ge 2^{-2d}\varepsilon\,.
\end{equation}
By definition of $\hat g$ and since $0_{\R^n}\in S(g)$, $(0_{\R^n},L^{1/2})\in S( \hat g,{\{ { L - {{\| {( {x,{x_{n + 1}}} )} \|}^2_2}} \}} )$.
Thus, $S( \hat g,{\{ { L - {{\| {( {x,{x_{n + 1}}} )} \|}^2_2}} \}} )$ is nonempty.
By definition of $K$,
\[\tilde f + \varepsilon {\| {( {x,{x_{n + 1}}} )} \|_2^{2d}} \in Q_{K/2}( \hat g,{\{ { L - {{\| {( {x,{x_{n + 1}}} )} \|}^2_2}} \}} )\,.\]
Since $\| {(x,{x_{n + 1}})} \|_2^{2d} = \sum\limits_{\bar \alpha  \in \N_d^{n + 1}} {c_{n+1}(\bar \alpha)(x,{x_{n + 1}})^{2\bar \alpha}} $, one has
\[\begin{array}{rl}
\| {\tilde f + \varepsilon \| {(x,{x_{n + 1}})} \|_2^{2d}} \|_{\max}&\le \|{\tilde f } \|_{\max} +\varepsilon \| {\| {(x,{x_{n + 1}})} \|_2^{2d}} \|_{\max}\\
&=\|{\tilde f } \|_{\max} +\varepsilon \max \{ \displaystyle {c_{n+1}(\bar \alpha)}/{c_{n+1}(2\bar \alpha)}\,:{\bar \alpha \in \N^{n+1}_d}\}\,.
\end{array}\]
Note that $\tilde f = \sum\nolimits_\alpha  {f_\alpha x^\alpha x_{n+1}^{2d-|\alpha|}}$, so $\| \tilde f  \|_{\max}=\| f  \|_{\max ,d}$. Thus, 
\begin{equation}\label{eq:second.ineq}
\| {\tilde f + \varepsilon \| {(x,{x_{n + 1}})} \|_2^{2d}} \|_{\max}\le \|{ f } \|_{\max, d} +\varepsilon \max \{ \displaystyle {c_{n+1}(\bar \alpha)}/{c_{n+1}(2\bar \alpha)}\,:{\bar \alpha \in \N^{n+1}_d}\}\,.
\end{equation}
From these and using Theorem \ref{theo:Complexity.Putinar},  one can choose 
\[\frac{K}{2}\ge C\exp \left( \left( 4^{d+1}d^2(n + 1)^{2d}\left(\varepsilon^{-1} \| f  \|_{\max, d}+ \max \left\{ \displaystyle \frac{c_{n+1}(\bar \alpha)}{c_{n+1}(2\bar \alpha)}\,:{\bar \alpha \in \N^{n+1}_d}\right\}\right) \right)^C \right)\,,\]
for some $C>0$. The right hand side of this inequality comes from \eqref{eq:first.ineq}, \eqref{eq:second.ineq} and the fact that the function $t \mapsto c\exp \left( {b{t^c}} \right)$ with positive constants $b$ and $c$ is increasing on $[0,\infty)$.
By setting $k_\varepsilon=K/2-d$, the conclusion follows.
\end{proof}
\subsection{Proof of Proposition \ref{prop:strong.duality.con}}
\label{proof:strong.duality.con}
\begin{proof}
We denote by $P$ and $P^\star$ feasible set and optimal solutions set for the moment SDP \eqref{eq:dual-sdp}, respectively.
We claim that $P$ is nonempty. Indeed, with $z$ being the moment sequence of the $1$-atomic measure $\theta(\bar x)^{-1}\delta_{\bar x}$ for some $\bar x\in S(g)$, it is not hard to check that the truncation of $z$ is a feasible solution of \eqref{eq:dual-sdp}. By setting $C: = {L_z}({\theta ^k}(f + \varepsilon {\theta ^d}))$, $P^\star$ is also the set of optimal solutions for
\begin{equation}\label{eq:dual-sdp2}
\begin{array}{rl}
{\tau_k^2}(\varepsilon) = \inf &{L_y}( {{\theta ^k}( {f +  \varepsilon {\theta ^d}} )} )\\
\text{s.t.}&y = {(y_\alpha )_{\alpha  \in \N^n_{2( {d + k})}}} \subset \R\,,\\
&{L_y}( {{\theta ^k}( {f +  \varepsilon {\theta ^d}} )} )\le C\,,\\
&{M_{k + d - {u_j}}}( {{g_j}y}) \succeq 0 \,,\;j = 0,\dots,m\,,\\
&{L_y}( {{\theta ^k}}) = 1\,,
\end{array}
\end{equation}
with $g_0=1$.
Denote by $\bar P$ the feasible set of \eqref{eq:dual-sdp2}.
Note that $\bar P$ is nonempty since the truncation of $z$ is also a feasible solution of \eqref{eq:dual-sdp2}. 
We will prove that $\bar P$ is bounded. 
By definition of $\theta$, ${\theta ^r} = \sum\nolimits_{\alpha  \in \N_r^n} {{C_\alpha^r }{x^{2\alpha }}}$ for all $r\in\N$, where $C_\alpha^r\ge 1$ for all $\alpha  \in \N_r^n$.
Since $f-f^\star \ge 0$ on $S(g)$, by Theorem \ref{theo:Representation.nonnegative2}, there exists $K\in \N$ such that
\[{\theta ^K}\left( {f  - f^\star +  \displaystyle\frac \varepsilon 2 {\theta ^d}} \right) \in Q_{K + d}{( g )}\,.\]
Note that $K$ depends only on $f$, $g$ and $\varepsilon$. 
Assume that $k\ge K$. 
Since $\theta^{k-K} \in \Sigma {[ x ]_{k-K}}$, one has
\[{\theta ^k}\left( {f - {f^\star}+  \displaystyle\frac \varepsilon 2 {\theta ^d} }\right) = \theta^{k - K} {\theta ^{K}}  \left( {f  - {f^\star}+  \displaystyle\frac \varepsilon 2 {\theta ^d}} \right) \in Q _{k + d}{( g)}\,.\] 
Then there exist $G_j\succeq 0$, $j=1,\dots,m$ such that
\[{\theta ^k}\left(f - {f^ \star } + \frac{\varepsilon }{2}{\theta ^d}\right) = \sum\limits_{j = 0}^m {v_{k + d - {u_j}}^T{G_j}{v_{k + d - {u_j}}}{g_j}}  = \sum\limits_{j = 0}^m {\trace({G_j}{v_{d + k - {u_j}}}v_{d + k - {u_j}}^Tg_j)} \,.\]
Let $y\in\bar P$. From these and since ${M_{k + d - {u_j}}}( {{g_j}y})\succeq 0$,
\[{L_y}\left( {{\theta ^k}\left( {f - {f^ \star } + \frac{\varepsilon }{2}{\theta ^d}} \right)} \right) = \sum\limits_{j = 0}^m {\trace({G_j}{M_{d + k - {u_j}}}({g_j}y))}  \ge 0\,.\]
Thus, for every $\beta \in \N_{k + d}^n$,
\[\begin{array}{rl}
\displaystyle\frac \varepsilon 2 {y_{2\beta }} &\le \displaystyle\frac \varepsilon 2 C_\beta ^{k + d}{y_{2\beta }} \le \displaystyle\frac \varepsilon 2 \sum\limits_{\alpha  \in \N_{k + d}^n} {C_\alpha ^{k + d}{y_{2\alpha }}}  = \displaystyle\frac \varepsilon 2 \sum\limits_{\alpha  \in \N_{k + d}^n} {C_\alpha ^{k + d}{L_y}({x^{2\alpha }})} \\
& = \displaystyle\frac \varepsilon 2 {L_y}({\theta ^{k + d}})  \le \displaystyle\frac \varepsilon 2 {L_y}({\theta ^{k + d}}) + {L_y}\left({\theta ^k}\left(f -  f^\star +\displaystyle\frac \varepsilon 2 \theta ^d\right)\right) \\
&= {L_y}({\theta ^k}(f + \varepsilon {\theta ^d})) -  f^\star {L_y}({\theta ^k})\le C - f^\star\,, 
\end{array}\]
since every element $y_{2\alpha}$ on the diagonal of the positive semidefinite matrix ${M_{k + d}}( {y})$ is nonnegative. 
 Thus, ${y_{2\beta }}\le 2(C - f^\star)\varepsilon^{-1}$ for every $\beta \in \N_{k + d}^n$. 
Since ${M_{k + d }}( {y})\succeq 0$, $| {{y_{\alpha  + \beta }}} | \le {y_{2\beta }} \le 2(C - \underline f)\varepsilon^{-1}$ for all $\alpha,\beta \in \N_{k + d}^n$. 
This implies that $\|y\|_2$ is bounded by $2(C - \underline f)\varepsilon^{-1}\sqrt {s(2(d + k))}$. 
Since the objective function of \eqref{eq:dual-sdp2} is linear and the feasible set of \eqref{eq:dual-sdp2} is closed and bounded, the set $P^\star$ of optimal solutions of \eqref{eq:dual-sdp2} is nonempty and bounded. 
By using Trnovska's result \cite[Corrollary 1]{trnovska2005strong},  $\rho_k^2(\varepsilon)=\tau_k^2(\varepsilon)$, yielding the desired conclusion.

\end{proof}
\subsection{Proof of Proposition \ref{prop:strong.duality.con2}}
\label{proof:strong.duality.con2}
\begin{proof}
We do a similar process as the proof of Proposition \ref{prop:strong.duality.con}. 
The difference is the bound of $y$ here  obtained by the inequality constraint $g_m=f-\underline f\ge 0$ in stead of Positivstellensatz. 
Thus, we do not need $k$ sufficient large here. More explicitly, one has $\|y\|_2 \le (C - \underline f)\varepsilon^{-1}\sqrt {s(2(d + k))}$, since for every $\beta \in \N_{k + d}^n$,
\[\begin{array}{rl}
\varepsilon {y_{2\beta }} &\le \varepsilon C_\beta ^{k + d}{y_{2\beta }} \le \varepsilon \sum\limits_{\alpha  \in \N_{k + d}^n} {C_\alpha ^{k + d}{y_{2\alpha }}}  = \varepsilon \sum\limits_{\alpha  \in \N_{k + d}^n} {C_\alpha ^{k + d}{L_y}({x^{2\alpha }})} \\
 &\le \varepsilon \sum\limits_{\alpha  \in \N_{k + d}^n} {C_\alpha ^{k + d}{L_y}({x^{2\alpha }})}  + \sum\limits_{\alpha  \in \N_k^n} {C_\alpha ^k{L_y}({x^{2\alpha }}{g_m})} \\
& = \varepsilon {L_y}({\theta ^{k + d}}) + {L_y}({\theta ^k}(f - \underline f )) = {L_y}({\theta ^k}(f + \varepsilon {\theta ^d})) - \underline f {L_y}({\theta ^k})\\
 &\le C - \underline f\,.  
\end{array}\]
\end{proof}
\subsection{Proof of Lemma \ref{lem:adding.sphere.ine}}
\label{proof:adding.sphere.ine}
\begin{proof}
Let us show that $(\xi_t)_{t=0,\dots,n}$ is real sequence. Obviously, $\xi_t\ge 0$, $t=0,\dots,n$. Set $r_t=\xi_t^{1/2}$, $t=0,\dots,n$. 
Then
\begin{equation}\label{radius.poly.sys}
\left\{ \begin{array}{l}
{r_0} = d( {{a_0},S( g,h )} )\ge 0\,,\\
{r_t} = d( {{a_t},S( g,h ) \cap \partial {B( {{a_0},{r_0}} )}  \cap \dots \cap \partial {B( {{a_{t - 1}},{r_{t - 1}}} )} } )\ge 0\,,\, t = 1,\dots,n\,,
\end{array} \right.
\end{equation}
where $d(a,A):=\inf\{\|x-a\|_2:\, x\in A\}$ for $a\in\R^n$ and $A\subset \R^n$.
It is sufficient to prove that $r_t$ is real, $t=0,\dots,n$.
It is easy to see that $S( g,h )$ is closed and  $S( g,h)$ is nonempty by assumption. From this, ${r_0}$ is nonnegative real and $S( g,h ) \cap \partial {B( {{a_0},{r_0}} )} $ is also closed and nonempty. It implies that $r_1$ is nonnegative real and  
\[{S( g,h) \cap \partial {B( {{a_0},{r_0}} )}  \cap \partial {B( {{a_{1}},{r_{1}}} )} }\]
 is also closed and nonempty. By induction, for $t \in \{ {0,\dots,n} \}$, $r_t$ is nonnegative real and
\[{S( g,h ) \cap \partial {B( {{a_0},{r_0}} )}  \cap \dots \cap \partial {B( {{a_{t}},{r_{t}}} )} }\]
is closed and nonempty. 
Thus,
\[S( {g,h \cup \{ {{\xi _t} - {{ \| {x - {a_t}} \|}_2^2}:\, t = 0,\dots,n} \}} )=S( g,h ) \cap \partial {B( {{a_0},{r_0}})}  \cap \dots \cap \partial {B( {{a_n},{r_n}} )} \ne\emptyset\,.\]
Let ${x^\star} \in S( {g,h \cup \{ {{\xi _t} - {{ \| {x - {a_t}} \|}_2^2}:\, t = 0,\dots,n} \}} )$.
Then
${x^\star} \in \partial B\left( {{a_0},{r_0}} \right) \cap \dots \cap \partial B( {{a_n},{r_n}} )$.
It follows that ${\| {{x^\star} - {a_t}} \|_2^2} = r_t^2=\xi_t,\, t = 0,\dots,n$. For $t = 1,\dots,n$,
\[\xi_t - \xi_0 = {\| {{x^\star} - {a_t}} \|_2^2} - {\| {{x^\star} - {a_0}} \|_2^2} = -2{( {{a_t} - {a_0}} )^T}{x^\star} + {\| {{a_t}} \|_2^2} - {\| {{a_0}} \|_2^2}\,.\]
It implies \eqref{eq:linear.equation}. Denote 
\begin{equation*}
A = \left( {\begin{array}{*{20}{c}}
{{{( {{a_1} - {a_0}} )}^T}}\\
{\dots}\\
{{{( {{a_n} - {a_0}} )}^T}}
\end{array}} \right)\qquad\text{ and }\qquad b = -\frac{1}{2}\left( {\begin{array}{*{20}{c}}
{\xi_1 - \xi_0 - {{\| {{a_1}} \|}_2^2} + {{\| {{a_0}} \|}_2^2}}\\
{\dots}\\
{\xi_n - \xi_0 - {{\| {{a_n}} \|}_2^2} + {{\| {{a_0}} \|}_2^2}}
\end{array}} \right).
\end{equation*}
The system  \eqref{eq:linear.equation} can be rewritten as $Ax^\star=b$. Since ${a_j} - {a_0},\, j = 1,\dots,n$ are linear independent in $\R^n$, $A$ is invertible.  Hence, $x^\star$ is determined uniquely by $x^\star=A^{-1}b$.
\end{proof}
\subsection{Proof of Theorem \ref{theo:poly.sys.ASIM}}
\label{proof:poly.sys.ASIM}
\begin{proof}
We will prove by induction that for $t \in \left\{ {0,\dots,n} \right\}$,
\begin{equation}\label{eq:need.to.show.ind}
\exists {K_t} \in \N:\,\forall k \ge {K_t}\,,\,\forall j \in \{ {0,\dots,t} \}\,,\,  \eta _k^j = {\xi _j}\,.
\end{equation}
For $t=0$, \eqref{eq:approx.raddi.square.0} is the SDP relaxation of order $k+w$ of 
\[{\xi _0} = \min \{ {{{\| {x - {a_0}} \|}_2^2}:\, x \in S( {g\cup \{ {L - {{\| {x } \|}_2^2}} \},h} ) } \}\,.\]
By assumption, ${( {\eta _k^0} )_{k \in \N}}$ finitely converges to $\xi_0$, i.e., there exist $K_0\in \N$ such that $\eta _k^0 = {\xi _0}$ for all $k\ge K_0$. 
It follows that \eqref{eq:need.to.show.ind} is true for $t=0$. 
Assume that \eqref{eq:need.to.show.ind} is true for $t=T$, i.e.,
\begin{equation}\label{eq:assump.induc}
\exists {K_T} \in \N:\,\forall k \ge {K_T}\,,\,\forall j \in \{ {0,\dots,T} \}\,,\, \eta _k^j = {\xi _j}\,.
\end{equation}
We will show that \eqref{eq:need.to.show.ind} is true for $t=T+1$. 
By \eqref{eq:approx.raddi.square.0} and \eqref{eq:assump.induc}, for all $k\ge K_T$,
\[\begin{array}{rl}
\eta _k^{T+1} = \mathop {\inf }\limits_{y \subset {\R^{s( {2(k+w)} )}}}& {L_y}( {{{\| {x - {a_{T+1}}} \|}_2^2}} )\\
\text{s.t. }&{M_{k+w - {u_j}}}( {{g_j}y} ) \succeq 0\,,\\
&{M_{k+w - {w_q}}}( {h_qy} ) = 0\,,\\
&{M_{k+w - 1}}( {( {{\xi_ j} - {{\| {x - {a_j}} \|}_2^2}})y}) = 0\,,\, j = 0,\dots,T\,,\\
&{y_0} = 1
\end{array}\]
is the SDP relaxation of order $k+w$ of problem 
\[{\xi _{T+1}} = \min \{ {{{\| {x - {a_t}} \|}_2^2}:\, x \in S( {g,h \cup \{ {{\xi _j} - {{\| {x- {a_j}} \|}_2^2},\, j = 0,\dots,T} \}} )} \}\,.\]
By assumption, ${( {\eta _k^{T+1}})_{k \ge K_T}}$ finitely converges to $\xi_{T+1}$, i.e., there exist $K_{T+1}\ge K_T$ such that $\eta_k^{T+1} = {\xi _{T+1}}$ for all $k\ge K_{T+1}$. 
It follows that \eqref{eq:assump.induc} is true for $t=T+1$. 
Thus, \eqref{eq:assump.induc} is true for $t=0,\dots,n$. 
For $t=n$, there exists $K=K_n\in \N$ such that for all $k\ge K$, $\eta_k^t = {\xi _t}$, $t = 0,\dots,n$. Let $k\ge K$ be fixed. Let $y$ be the solution of problem 
\[\begin{array}{rl}
\eta _k^n: = \mathop {\inf }\limits_{y \subset {\R^{s( {2(k+w)})}}}& {L_y}( {{{\| {x - {a_n}} \|}_2^2}} )\\
\text{s.t. }&{M_{k+w - {u_j}}}( {{g_j}y} ) \succeq 0\,,\\
&{M_{k+w - {w_q}}}( {h_qy}) = 0\,,\\
&{M_{k+w - 1}}( {( {{\xi_ j} - {{\| {x - {a_j}} \|}_2^2}} )y} ) \succeq 0\,,\, j = 0,\dots,n-1\,,\\
&{y_0} = 1\,,
\end{array}\]
which is the SDP hierarchy relaxation of order $k+w$ of problem 
\[{\xi _{n}} = \min \{ {{{\| {x - {a_n}} \|}_2^2}:\, x \in S( {g,h \cup \{ {{\xi _j} - {{\| {x - {a_j}} \|}_2^2}:\, j = 0,\dots,n-1} \}} )} \}\,.\]
We will prove that this latter problem has a unique minimizer $x^\star$. Set $\hat h:=h\cup \{ \xi _0-\|x-a_0\|_2^2\}$. Then 
\[\left\{ \begin{array}{l}
{\xi _0} = \min \{ {{{\| {x - {a_0}} \|}_2^2}:\, x \in S( g,\hat h)} \}\,,\\
{\xi _t} = \min \{ {{{\| {x - {a_t}} \|}_2^2}:\, x \in S( { g,\hat h \cup \{ {{\xi _j} - {{\| {x - {a_j}} \|}_2^2}:\, j = 0,\dots,t - 1} \}} )} \}\,,\\
\qquad\qquad\qquad\qquad\qquad\qquad\qquad\qquad\qquad\qquad\qquad\qquad  \ \ \ \  \ \ t = 1,\dots,n\,.
\end{array} \right.\]
By Lemma \ref{lem:adding.sphere.ine} (with $S(g,h):=S(g,\hat h)$), there exists $x^\star$ such that 
\begin{equation}\label{eq:intersect.spheres4}
\begin{array}{rl}
&S( {g,h \cup \{ {{\xi _j} - {{\| {x - {a_j}} \|}_2^2}:\,j = 0,\dots,n } \}} ) \\
= &S( { g, \hat h \cup \{ {{\xi _j} - {{\| {x - {a_j}} \|}_2^2}:\,j = 0,\dots,n } \}} ) = \{ {{x^\star}} \}\,.
\end{array}
\end{equation}
Let $ a$ be a minimizer of the above POP with value $\xi_n$. Then 
\[a\in S( {g,h \cup \{ {{\xi _j} - {{\| {x - {a_j}} \|}_2^2}:\, j = 0,\dots,n-1} \}} )\,,\]
 and ${\| {a - {a_n}} \|}_2^2=\xi_n$. It follows that
\[a\in  S( {g,h \cup \{ {{\xi _j} - {{\| {x - {a_j}} \|}_2^2}:\,j = 0,\dots,n } \}} )\,.\]
From this and by \eqref{eq:intersect.spheres4}, $a=x^\star$. 
Since the above POP with optimal value $\xi _n$ has a unique minimizer $x^\star$ and its Lasserre's hierarchy has finite convergence, the solution $y$ of the SDP with optimal value $\eta_k^n$ must have a representing $1$-atomic measure $\mu$ supported on $x^\star$. Then $y$ satisfies the flat extension condition when $k$ is large enough and ${\mathop{\rm supp}\nolimits}( \mu  ) = \{ {{x^\star}}\}\subset S(g,h)$. The conclusion follows.
\end{proof}
%
\bibliographystyle{abbrv}

\end{document}